\documentclass[11pt]{article}

\setlength{\parindent}{0in}
\setlength{\parskip}{0.1in}
\newcommand{\C}{\mathcal{C}} %
\newcommand{\R}{\mathcal{R}} %
\newcommand{\Q}{\mathbb{Q}}
\newcommand{\Z}{\mathbb{Z}}
\usepackage[authoryear]{natbib}
\usepackage{fullpage}
\usepackage{authblk}
\usepackage{tikz}

\tikzset{Red/.style={red}}
\tikzset{Blue/.style={blue}}
\tikzset{ForestGreen/.style={green}}
\tikzset{Sepia/.style={brown}}
\tikzset{Black/.style={black}}
\usepackage[title]{appendix}
\renewcommand{\Re}{\mathbb{R}}
\newcommand{\vinf}{\operatorname{vinf}}

\renewcommand{\S}{\mathcal{S}}
\newcommand{\E}{\mathcal{S}^E} 
\newcommand{\F}{\mathcal{F}} 
\newcommand{\VF}{RVF}
\newcommand{\LPVF}{{RLPVF}}
\newcommand{\zLP}{z_{\rm LP}}
\newcommand{\CLP}{\C_{\rm LP}}
\newcommand{\hzeta}{\hat{\zeta}}
\newcommand{\PD}{{\mathcal P}_{D}} 
\newcommand{\Extr}{\mathcal{E}} 
\newcommand{\proj}{\operatorname{proj}}
\newcommand{\XMO}{X_{\rm MO}}
\newcommand{\Smin}{\S_{\min}}
\newcommand{\hu}{\hat{u}}
\newcommand{\inter}{\operatorname{int}}
\newcommand{\epi}{\operatorname{epi}}

\usepackage{amsmath,amsfonts,amsthm,commands,xcolor,graphicx,float,amssymb}
\usetikzlibrary{shapes,arrows,math}
\usepackage{pgfplots}
\pgfplotsset{compat=1.16}
\DeclareMathOperator*{\argmax}{arg\,max}
\DeclareMathOperator*{\argmin}{arg\,min}
\usepackage{enumerate} 
\usepackage{subcaption}
\usepackage{dirtytalk}
\usepackage[shortlabels]{enumitem}
\usepackage[nocomma]{optidef}

\usepackage{hyperref}
\hypersetup{
	colorlinks,
	citecolor=blue,
	filecolor=blue,
	linkcolor=blue,
	urlcolor=black
}
\allowdisplaybreaks
\usepackage[ruled,vlined,linesnumbered, titlenotnumbered,algo2e]{algorithm2e}
\usepackage{algorithm}
\DeclareCaptionLabelFormat{noname}{#2}
\captionsetup[algorithm]{labelformat=noname}
\usepackage{euscript}
\usepackage{etoolbox}
\makeatletter
\newenvironment{mysubequations}[1]
{%
	\addtocounter{equation}{-1}%
	\begin{subequations}
		\def\@currentlabel{#1}%
	}
	{%
	\end{subequations}\ignorespacesafterend
}
\makeatother

\date{July 8, 2024}

\newcommand{\ImageDir}{figures}

\newcommand{\ABSTRACT}{In this study, we investigate the connection between the efficient frontier (EF) of a general multiobjective mixed integer linear optimization problem (MILP) and the so-called \emph{restricted value function} (RVF) of a closely related single-objective MILP. In the first part of the paper, we detail the mathematical structure of the RVF, including characterizing the set of points at which it is differentiable, the gradients at such points, and the subdifferential at all nondifferentiable points. We then show that the EF of the multiobjective MILP is comprised of points on the boundary of the epigraph of the RVF and that any description of the EF suffices to describe the RVF and vice versa. Because of the close relationship of the RVF to the EF, we observe that methods for constructing the so-called value function (VF) of an MILP and methods for constructing the EF of a multiobjective optimization problem are effectively interchangeable. Exploiting this observation, we propose a generalized cutting-plane algorithm for constructing the EF of a multiobjective MILP that arises from an existing algorithm for constructing the classical MILP VF. The algorithm identifies the set of all integer parts of solutions on the EF. We prove that the algorithm converges finitely under a standard boundedness assumption and comes with a performance guarantee if terminated early.}

\graphicspath{{\ImageDir/}}

\begin{document}

	\title{On the Relationship Between the Value Function and the Efficient Frontier of a Mixed Integer Linear Optimization Problem}
	\author[1]{Samira Fallah\thanks{\texttt{saf418@lehigh.edu} 
	}}
	\author[1]{Ted K. Ralphs\thanks{\texttt{ted@lehigh.edu}
	}}
	\author[2]{Natashia L. Boland\thanks{\texttt{natashia.boland@gmail.com}
	}}
	\affil[1]{Department of Industrial and Systems Engineering, Lehigh University, USA}
	\affil[2]{School of Electrical Engineering, Computing and Mathematical Sciences, Curtin University, Perth, Western Australia}

	\maketitle
	
	\begin{abstract}
		\ABSTRACT
	\end{abstract}
	
	
	\section{Introduction}
	\label{sec:intro}
	In this study, we consider the relationship between the efficient frontier (EF) of a multiobjective mixed integer linear optimization problem (MILP) and a certain value function (VF), which we refer to as the \emph{restricted value function} (\VF{}), associated with a closely related mixed integer linear optimization problem. Our main result is that any description of the EF suffices to describe the RVF and vice versa. Specifically, we demonstrate that the EF is composed of a subset of the points on the boundary of the epigraph of the \VF{} (also referred to as the \emph{graph} of the \VF{}), and that any point on the boundary and \emph{not} in the EF has (i) the same objective value as a point on the EF that dominates it, and (ii) a directional derivative of zero, in the direction towards the dominating point. 
 
	Although our results demonstrate that the EF and the RVF are closely related and indeed effectively interchangeable, their relationship seems not to have been previously observed mainly because of the disparate ways in which these mathematical objects have been described in the separate literatures in which the concepts have been developed. Upon closer examination, however, the relationship between the \VF{} and the EF is quite intuitive. While we believe this work is the first to formally and explicitly establish the relationship, it can be seen implicitly in the results of several previous works, such as those by~\citep{trapp2013level},~\citep{ralphs2014value}, and~\citep{bodur2016decomposition}. The so-called minimal tenders utilized in~\citeauthor{trapp2013level}'s algorithm for constructing the VF of a pure integer linear optimization problem (PILP) can be seen as the points on the EF of a related multiobjective problem. \citet{ralphs2014value} generalized this concept in their work on the structure of the VF of a general MILP. More recently,~\citet{bodur2016decomposition} observed that in block-structured problems, the solution to the column generation subproblem can be viewed as equivalent to evaluating a certain VF, and solutions can thus be restricted only to so-called \emph{nondominated points}.
	
	The relationship described in the remainder of the paper has some apparently broad-ranging implications, including that algorithms designed for the construction and/or approximation of the EF are effectively interchangeable with algorithms for the construction and/or approximation of the \VF{} and VFs in general. Because algorithms for these two tasks have so far been used in very different application domains and for very different purposes, there are likely many possibilities for the cross-pollination of ideas. To illustrate this, we propose a generalized cutting-plane algorithm for constructing both the \VF{} and the EF. The approach we suggest is finitely convergent, exploits the discrete structure of the \VF{}, and provides a performance guarantee if terminated early. It is a modified version of an existing algorithm for constructing the full VF, and to the best of our knowledge, the approach is entirely different from existing algorithms for the construction of the EF. Additionally, our algorithm is one of few algorithms developed to date that addresses multiobjective MILPs in the presence of continuous variables with any number of objectives, and it yields improved bounds on the number and size of subproblems that need to be solved to determine the discrete structure of the EF. These bounds are also comparable to existing algorithms for the PILP case (without continuous variables).

	In the remainder of this section, we set the stage by formally defining the important terms and concepts. We first describe the terminology and basic properties related to multiobjective MILPs and their associated EFs before introducing the concept of the \VF{}. Although the \VF{} that we introduce is closely related to the classical VF of a single-objective MILP (it can be viewed as a generalization), we are not aware of any previous study of it. Its properties are much more difficult to characterize than those of the classical VF. 
	
	\paragraph{Multiobjective Optimization.} The multiobjective MILP that serves as the focus of our study is defined as 
	\begin{equation}
		\vinf_{(x_I, x_C) \in \XMO} C_I x_I + C_C x_C,
		\tag{MO-MILP}\label{eq.MultiObj_ch2}
	\end{equation}
	where 
	\begin{equation*}
		\XMO = \left\{(x_I,x_C)\in \Z_+^r\times \Re_+^{n-r}\;: \; A_I x_I + A_C x_C = b\right\},
	\end{equation*}
	is the feasible region; $A \in \Q^{m \times n}$ is the coefficient matrix of the constraints; $b\in \Q^{m}$ is the right-hand side (RHS) of the constraints; and the rows of matrix $C \in \Q^{(\ell + 1) \times n}$ are the multiple objectives of the problem and denoted by $\{c^0, c^1, \hdots , c^{\ell}\}$. The $\vinf$ operator indicates that this is a vector minimization (multiobjective) problem, which means that there is not a single optimal value; the $\vinf$ operator returns a set of nondominated vectors of objective values, as described below \footnote{We use $\inf$ rather than $\min$ because the definition of the RVF below introduces the possibility of MILPs with irrational right-hand sides for which attainment of the optimal value is not guaranteed}. $A_I$ and $C_I$ are the submatrices of $A$ and $C$ consisting of columns associated with the integer variables (indexed by set $I = \{0, \hdots, r - 1\}$), as opposed to $A_C$ and $C_C$, which are the submatrices corresponding to the columns associated with the continuous variables (indexed by set $C = \{r, \hdots, n - 1\}$). We assume that the feasible region $\XMO$ is bounded.
	
	The aim of multiobjective optimization is to characterize the trade-offs inherent in optimizing multiple objectives simultaneously. This analysis is most naturally done in the $(\ell + 1)$-dimensional space known as the \emph{criterion space}, which contains the vectors of objective values associated with points in the $n$-dimensional \emph{decision space}, the space containing the feasible region $\XMO$. While solving an MILP with a single objective means determining its unique optimal value, solving a multiobjective MILP means generating the set of all vectors in criterion space associated with the so-called efficient solutions, those for which there is no other solution for which the objective value is at least as good for every objective and strictly better for at least one objective. 
	
	We briefly review some concepts in multiobjective optimization, referring interested readers to \citep{ehrgott2005multicriteria} for more details. An important concept in this context is that of dominance. The point $C_I x_I + C_C x_C \in \Re^{\ell+1}$ in criterion space, associated with $(x_I, x_C) \in \XMO$, \emph{dominates} $C_I x^\prime_I + C_C x^\prime_C \in \Re^{\ell+1}$, associated with $(x^\prime_I, x^\prime_C) \in \XMO$, if $C_I x_I + C_C x_C \lneqq C_I x^\prime_I + C_C x^\prime_C$, i.e., $(C_I x_I + C_C x_C)_j \leq (C_I x^\prime_I + C_C x^\prime_C)_j$ for all $j = \{0, 1, \hdots, l\}$ and $(C_I x_I + C_C x_C)_j < (C_I x^\prime_I + C_C x^\prime_C)_j$ for at least one index $j \in \{0, 1, \hdots, l\}$. A point in criterion space that is not dominated by any other point is called a \emph{nondominated point} (NDP). The 
 set of all NDPs is the aforementioned \emph{efficient frontier}. A preimage of an NDP in the decision space is referred to as an \emph{efficient solution}. A point $(x_I, x_C) \in \XMO$ that is not necessarily efficient but for which there does not exist $(x_I^\prime, x_C^\prime) \in \XMO$ such that $C_I x_I^\prime + C_C x_C^\prime < C_I x_I + C_C x_C$ is called \emph{weakly efficient}, and the associated point $C_I x_I + C_C x_C$ in criterion space is referred to as a \emph{weakly nondominated point} or a \emph{weak NDP}. It is important to note that a weak NDP that is not also an NDP is, in fact, a \textit{dominated} point. 

	\paragraph{Restricted Value Function.} What we call the \VF{} provides another way of analyzing the trade-offs in the multiobjective optimization problem. Specifically, we consider the following related MILP with a single objective obtained by imposing all but one of the objectives in~\eqref{eq.MultiObj_ch2} as constraints, which we refer to as the \emph{parametric constraints}. This MILP can be written as follows:
	\begin{equation}
		\inf_{(x_I, x_C)\in X} c^{0}_I x_I + c^{0}_{C} x_C, \tag{MILP} \label{eq.MILP_ch2}
	\end{equation}
	where 
	\begin{equation*}
		X = \left\{(x_I, x_C)\in \Z_+^r\times \Re_+^{n-r}\;: \; C_I^{1:\ell} x_I + C_C^{1:\ell} x_C \leq f,\ A_I x_I + A_C x_C = b\right\},
	\end{equation*}
	is the feasible region; $c^0$ is the first row of the matrix $C$; $C^{1:\ell}$ is the submatrix consisting of the remaining rows of $C$, and $f \in \Q^{\ell}$ is a fixed vector to be replaced shortly by a parameter to obtain the aforementioned \VF{}. The particular objective that is chosen as $c^0$ is arbitrary and the results hold no matter what objective is chosen.
	
	We now define the \VF{} $z: \Re^{\ell} \rightarrow \Re \cup \{\pm \infty\}$ associated with~\eqref{eq.MILP_ch2} to be the function 
	\begin{equation}
		z(\zeta) = \inf_{(x_I,x_C)\in \mathcal{S}(\zeta)} c^{0}_I x_I + c^{0}_C x_C, \tag{RVF} \label{eq.RVF_ch2}
	\end{equation}
	that returns the optimal solution value of~\eqref{eq.MILP_ch2} as a function of a RHS parameter $\zeta \in \Re^{\ell}$, where
	\begin{equation*}
		\mathcal{S} (\zeta) = \left\{(x_I, x_C)\in \Z_+^r \times \Re_+^{n-r}\;: \; C_I^{1:\ell} x_I + C_C^{1:\ell} x_C \leq \zeta,\ A_I x_I + A_C x_C = b\right\}.
	\end{equation*}
	Note that the classical VF~\citep{blair1977value,blair1979value,guzelsoy2007duality,ralphs2014value} is closely related and is the special case of the RVF in which the only constraints with a fixed right-hand side are the bound constraints ($m = 0$). %
 As usual, we let $z(\zeta) = +\infty$ for $\zeta \not\in \C$, where
	\begin{equation*}
		\C = \left\{\zeta \in \Re^{\ell}\;: \; \mathcal{S}(\zeta)\neq \emptyset\right\}.
	\end{equation*}
	The function $z$ is always bounded from below because of our assumption that the feasible region $\XMO$ of the multiobjective MILP is bounded.
	
	\bexample\label{ex.MILPVF}
	Here, we illustrate the concepts discussed so far. Consider an RVF defined by 
	\begin{equation*}
		\begin{array}{ll@{}ll}
			z(\zeta) = &\min~ &2x_1 + 5x_2 + 7x_4 + 10x_5 + 2x_6 + 10x_7\\
		& \emph{s.t.}	&-x_1 - 10x_2 + 10x_3 - 8x_4 + x_5 - 7x_6 + 6x_7 \leq \zeta\\
		&	&-x_1 + 4x_2 + 9x_3 + 3x_4 + 2x_5 + 6x_6 - 10x_7 = 4\\
		&	&  5x_2 + x_4 + x_8 = 5\\
		&	& 5x_2 + x_7 + x_9 = 5\\
		&	&x_{j} \in \{0, 1\}, \quad \forall j \in \{1, 2\}\\
		&	&x_{j} \in \Re_+, \quad \forall j \in \{3, 4, \hdots, 9\},
		\end{array}
		\qquad
	\end{equation*}
	for all $\zeta \in \Re$. This is not precisely in our standard form because the fixed upper bounds on the integer variables are embedded in their definition. However, it can easily be converted to our standard form.
	Figure~\ref{Fig:vf_first_instance} below shows the VF for the MILP, while Figure~\ref{Fig:EF_first_instance} shows the EF for the associated multiobjective optimization problem. Note that the graph of the VF{} and the EF are identical except for the horizontal line segment between the points $(-10,5)$ and $(-7\frac{5}{6},5)$, which has been thickened in the figure for emphasis.
	\begin{figure}[htbp]
		\begin{subfigure}{0.5\textwidth}\centering
			\includegraphics[width=\linewidth]{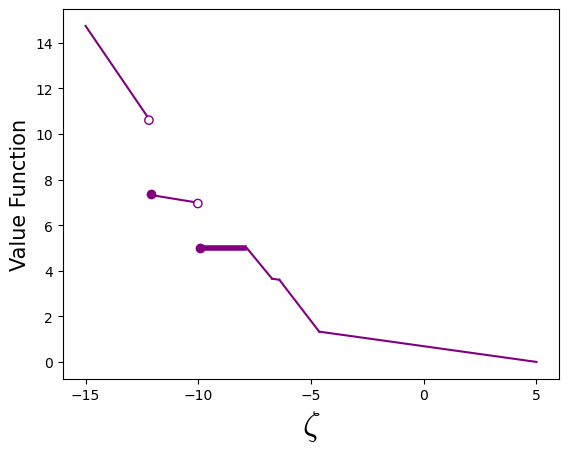}
			\caption{\VF{}}\label{Fig:vf_first_instance}
		\end{subfigure}\hfil
		\begin{subfigure}{0.5\textwidth}\centering
			\includegraphics[width=\linewidth]{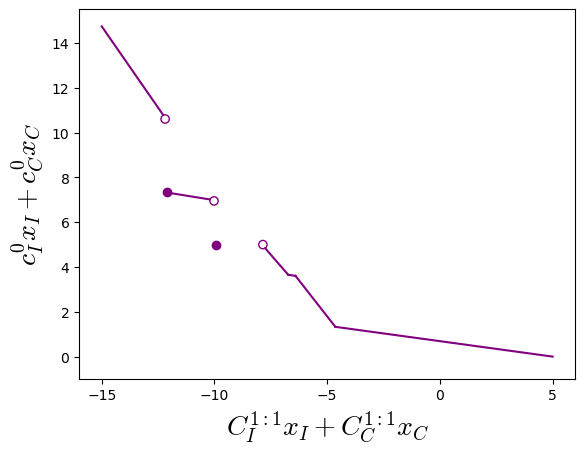}
			\caption{EF}\label{Fig:EF_first_instance}
		\end{subfigure}
		\caption{The portion of the \VF{} and the EF associated with Example~\ref{ex.MILPVF} with $\zeta\geq -15$. Note that the frontier extends to the left up to $\zeta=-57\frac{2}{3}$; the finite domain of $z$ is $\C=[-57\frac{2}{3},+\infty)$. The complete EF is shown in Figure~\ref{Fig:completeFrontier}.}
		\label{Fig:first_Ex}
	\end{figure}
	\eexample
	
	In the remainder of the study, we demonstrate that the RVF and the EF capture the same information and that algorithms for the construction of the two are effectively interchangeable. Throughout the study, we consider the given instance~\eqref{eq.MILP_ch2}, with its associated \VF{} and the corresponding instance of~\eqref{eq.MultiObj_ch2}. The study is organized as follows. We begin by reviewing related work in Section~\ref{sec:RelatedWork}. In Section~\ref{sec:MILP_VF}, we provide a characterization of the \VF{} in terms of a discrete set of integer parts of NDPs. In Section~\ref{sec:Relationship}, we formalize the relationship between the \VF{} and the EF. In Section~\ref{sec:FiniteAlgorithm}, we present our cutting-plane algorithm for constructing both the EF and the VF. Both the VF representation and the cutting-plane algorithm are finite under our assumption that $\XMO$ is bounded. We analyze the theoretical performance of the algorithm compared with that of existing algorithms. Finally, we summarize our findings and concluding remarks and suggest directions for future work in Section~\ref{sec:conclusions}.
	
	
	\section{Related Work}
	\label{sec:RelatedWork}
	Methods both for constructing the EF of a multiobjective optimization problem and for constructing the VF of an MILP have been extensively studied in the open literature. As the literature is vast, we focus here on the most closely related works. 
 
	\subsection{Multiobjective Optimization}
	Multiobjective optimization, the analysis of trade-offs between multiple conflicting objective functions, has numerous applications across various fields, as most real-world problems arising in practice do have multiple objectives. In this study, we address the generation of the exact EF for multiobjective MILPs. For a recent survey and a comprehensive overview of algorithms, we recommend~\citep{halffmann2022exact}. 
 
 Algorithms in the multiobjective area can be roughly classified into two categories: scalarization-based and non-scalarization-based methods. 
 Scalarization techniques are the most common and involve solving a sequence of single-objective problems, each one producing a single NDP or otherwise yielding information about a local region of the EF.
 Non-scalarization methods, on the other hand, take a more global view, using, e.g., an outer approximation to bound the EF by convex or nonconvex bounding functions. In the MILP case, some variant of branch-and-bound is typically used to implement these algorithms.
	
A wide variety of scalarization-based methods have been proposed, including the \emph{weighted sum} method~\citep{zadeh1963optimality}, the \emph{perpendicular search} method~\citep{chalmet1986algorithm}, the \emph{weighted Tchebycheff} method~\citep{bowman1976relationship, yu1973class, zeleny1973compromise}, the \emph{$\epsilon$-constraint} method \citep{haimes1971bicriterion}, the Hybrid method~\citep{guddat1985multiobjective}, Benson's method~\citep{benson1978existence}, and the Pacoletti-Serafini's method~\citep{pascoletti1984scalarizing}. These methods typically involve an iterative process in which a list of discovered NDPs and a list of unexplored regions of the criterion space are maintained. At each iteration, the algorithm produces a new NDP within an unexplored region of the criterion space. The process is repeated until there are no more unexplored regions. For a comprehensive review of scalarization methods in the multiobjective optimization field, the reader is referred to~\citep{ehrgott2006discussion}.
	
	The most straightforward scalarization method is the weighted sum method. The single objective created by this method is a weighted sum of the original objectives. When the weights are all positive, the solution to the weighted sum problem is guaranteed to be nondominated. On the other hand, not all nondominated solutions can be generated as a solution to some weighted sum problem in the MILP case---only the so-called \emph{supported NDPs} can be generated in this way. The NDPs that can be found via weighted sum-based scalarization are called \emph{supported} NDPs. 
 
	Several methods have been developed to address unsupported NDPs.
 The augmented weighted Tchebycheff method~\citep{bowman1976relationship} seeks to find NDPs within the exploration region by minimizing the distance to the ideal point, which is defined as the point whose components are obtained by minimizing the objective functions. \citet{ralphs2006improved} proposed a weighted Tchebycheff scalarization algorithm for constructing the EF of a biobjective integer programming problem.  
 
 The $\epsilon$-constraint method, which involves minimizing a primary objective while restricting the other objectives through inequality constraints, is widely used with many existing variants. This algorithmic approach has an obvious connection to the RVF. \citet{tamby2021enumeration} has recently expanded the use of the $\epsilon$-constraint method to address problems involving two or more objectives. The proposed algorithm divides the search area into segments that can be individually explored by solving an integer program.
    	
	To the best of our knowledge, the GoNDEF algorithm developed by~\citep{rasmi2019gondef} is currently the only algorithm that utilizes scalarization techniques to address MILPs with more than two objectives. GoNDEF first finds all integer parts of NDPs (as our proposed algorithm also does) and then attempts to describe the non-dominated facets of the stability regions associated with each integer part. 

Recently, a number of non-scalarization-based methods have been proposed. Forget et al. in \citeyear{forget2022warm} and \citeyear{forget2022branch} focuses on 0-1 problems and employs outer approximation for computing the linear relaxation to generate lower bound sets using a Benson-like algorithm. 
Most other non-scalarization-based algorithms are focused on the more general multiobjective mixed integer convex and nonconvex optimization settings with any number of objectives. 
In particular, the MOMIX~\citep{de2020solving} and HyPaD~\citep{eichfelder2021hybrid,eichfelder2021implementation} algorithms address multiobjective mixed integer convex optimization problems, while the algorithms proposed in~\citep{eichfelder2021general,eichfelder2022solver,link2022computing} address both the convex and nonconvex cases. All of these algorithms compute some kind of enclosure of the EF, which is iteratively refined until a convergence criterion is achieved. 
 The RVF algorithm proposed herein similarly maintains a kind of enclosure, maintaining a global upper bounding function, but the upper bounding function converged to the EF without the need for a lower bounding function. On the other hand, this is why the RVF algorithm requires the solutions of a mixed integer nonlinear optimization problem in each iteration. Because of their focus on the general nonlinear setting, the alternatives mentioned above also have a convergence criterion based on error bounds and do not produce the same kind of discrete representation of the EF that the RVF produces when applied to problems with only linear constraints. 

More recently,~\citet{dunbar2023relaxations} considered the optimization of multiobjective integer programs through various relaxation and duality approaches, including continuous, convex hull, and Lagrangian relaxations, as well as Lagrangian duals and set- and vector-valued superadditive duals, offering alternative solution methods. 
	For more on approaches to approximating the EF using outer approximations, we refer the interested reader to~\citep{benson1998outer,hamel2014benson,csirmaz2016using,lohne2015bensolve, ruzika2005approximation}. A comprehensive survey of branch-and-bound methods for multiobjective linear integer problems can be found in~\citep{przybylski2017multi}. For a detailed review of the literature on multiobjective optimization, we refer the interested reader to~\citep{ehrgott2000survey, Ehrgott2005, ehrgott2016exact}. 
	
	\subsection{Value Function}
	
	The classical VF of an MILP is well-studied, and understanding its structure is crucial for many applications due to its role as a core ingredient in optimality conditions used in a variety of algorithms for solving optimization problems. These optimality conditions are also employed in formulating and solving important classes of multistage and multilevel optimization problems, in which optimality conditions are embedded as constraints in a larger optimization problem. Additionally, optimality conditions are also the basis for techniques used for warm-starting and sensitivity analysis, which are the areas in which the connection to multiobjective optimization is most apparent. 
	
	There have been several studies investigating the structure of the VF in MILPs. \citet{blair1977value} and~\citet{blair1979value} identified fundamental properties of the VF, including that it is comprised of a minimum of a finite number of polyhedral functions. \citet{blair1982value} showed that the VF of a PILP is a \emph{Gomory function}, which is the maximum of subadditive functions known as \emph{Chv\'atal functions}. \citet{blair1984constructive} extended this result to general MILPs, demonstrating that they are the maximum of Gomory functions. Finally,~\citet{blair1995closed} identified what was then referred to as a ``closed-form'' representation of the MILP VF, the so-called \emph{Jeroslow formula}, though this did not lead to what could be considered a practical representation. \citet{GuzRal07} further studied the properties of the VF as it is related to methods of warm-starting and sensitivity analysis and also suggested a method of construction for the case of an MILP with a single constraint. \citet{ralphs2014value} extended this work by providing further details on the structure and properties of the VF for a general MILP and suggesting a practical representation.
	
	Most methods for constructing the VF have focused on the case of PILP, where the discrete structure is the most evident and finite representation is the easiest to achieve. \citet{wolsey1981integer} used a cutting-plane method to derive a sequence of Chv\'atal functions that leads to constructing the full VF for a PILP. \citet{conti1991buchberger} employed reduced Gr\"{o}bner bases and modified the classical Buchberger's algorithm to solve PILPs. Later,~\citet{schultz1998solving} used Gr\"{o}bner basis methods to solve two-stage stochastic programs with complete integer recourse and different RHSs. The authors identified a countable set known as the candidate set of the first-stage variables in which the optimal solution is contained. Then~\citet{ahmed2004finite} developed a global optimization algorithm for solving general two-stage stochastic programs with integer recourse and discrete distributions by exploiting the structure of the second-stage integer problem VF. The authors demonstrated that their algorithm avoids enumerating the search space. \citet{kong2006two} considered a two-stage PILP and presented a superadditive dual formulation that exploits the VF in both stages, solving that reformulation by a global branch-and-bound or level-set approach. \citet{trapp2015note} proposed a constraint aggregation-based approach to alleviate the memory requirement for storing the VF. \citet{zhang2021bilevel} first generalized the complementary slackness theorem to bilevel IP (BIP) and showed that it can be an advantage for constructing the VFs of BIP. The authors also demonstrated that the VFs of BIPs can be constructed by bilevel minimal RHS vectors and presented a dynamic programming algorithm for constructing the BIP VF. Finally, \citet{brown2021gilmore} used a Gilmore-Gomory approach to construct the IP VF.
	
	There have been relatively few algorithmic advances in finding the VF of a general MILP. \citet{bank1982non} studied the qualitative and quantitative stability properties of mixed integer multiobjective optimization problems, which can also be considered an MILP VF. \citet{guzelsoy2006value} proposed algorithms for constructing the VF of an MILP with a single constraint. The properties of the VF and a method for constructing the VF in the case of a general MILP were discussed in~\citet{ralphs2014value}. In the current work, we generalize the work in~\citet{ralphs2014value} to the multiobjective setting. Our hope is that methods of constructing the VF will now quickly be advanced by the adoption of techniques developed in the multiobjective community. 
	
	
	\section{The Restricted Value Function}
	\label{sec:MILP_VF}
	
	Before presenting the main result detailing the relationship between the \VF{} and the EF in Section~\ref{sec:Relationship}, we describe here some basic properties of the \VF{}. In doing this, we hope to demonstrate that certain properties of the EF arise more naturally by considering the structure of the \VF{}. For example, the gradients and subdifferentials of the \VF{} provide a means to understand the trade-offs between objectives in a local region of the EF and we show that these can be fully characterized by considering the dual feasible region of the so-called \emph{continuous restriction}, whose own value function is the so-called \emph{restricted LP value function} (\LPVF{}), described next in Section~\ref{sec:RLPVF}. In Section~\ref{sec:RVF}, we show that the \VF{} can be described as the minimum of a set of translations of the \LPVF{} and characterize its gradients and subdifferentials. 
	
	\subsection{Structure of the Restricted LP Value Function}
 \label{sec:RLPVF}
	
	The \LPVF{} is the special case of the \VF{} when there are no integer variables ($r = 0$). We define the \LPVF{} $\zLP: \Re^{\ell} \rightarrow \Re \cup \{\pm \infty\}$ by
	\begin{mysubequations}{RLPVF} \label{eq.RLPVF_ch2}
		\begin{align}
			\zLP(\zeta) = \inf \;\; & c^0_C x_C\\
		\text{s.t.} ~~	&C_C^{1:\ell} x_C \leq \zeta \label{eq.parametric_ch2} \\
			&A_C x_C = \beta \label{eq.fixed_ch2} \\
			&x_C \in \Re_+^{n - r},
		\end{align}
	\end{mysubequations}
	for all $\zeta \in \Re^{\ell}$. As with the RVF, we define $\zLP$ to take values in the extended reals rather than restricting it to its finite domain, which affects the analysis in significant ways that we discuss below. In this section, we consider $\beta \in \Q^{m}$ to be a fixed vector throughout, while in Section~\ref{sec:RVF}, we consider a parametric class of functions of this form, with different values of $\beta$ arising from fixing the integer part of the solution in~\eqref{eq.RVF_ch2}. As previously, we assume that $\zLP$ is bounded from below.

	The structure of the classical LP value function, where the entire right-hand side is parametric (\( m = 0 \)), is well-studied. It is well established that in this special case, the function $\zLP$ is a polyhedral function whose epigraph is a polyhedral cone described by facets associated with the extremal elements of the dual feasible region $\PD$, defined below. In the more general case of the \LPVF{}, the function is instead a \emph{slice} of this full LP VF, and its epigraph is hence the intersection of a hyperplane with a polyhedral cone. 
	
	In order to analyze the structure of this function, we consider the dual of the LP that arises in the evaluation of $\zLP(\hzeta)$ for $\hzeta \in \Re^{\ell}$, which is
	\begin{equation} \label{eq.D_RLP} \tag{D-RLP}
		\sup_{(u,v) \in \PD} {\hzeta}^{\top} u + \beta^{\top} v,
	\end{equation}
	where $u$ is the vector of dual variables associated with the parametric constraints~\eqref{eq.parametric_ch2}, $v$ is the vector of dual variables associated with the nonparametric constraints~\eqref{eq.fixed_ch2}, and the feasible region is
	\begin{equation*}
		\PD = \left\{(u,v) \in \Re_{-}^{\ell} \times \Re^{m}\;: \; {C_C^{1:\ell}}^{\top} u + A^{\top}_C v \leq c^0_C\right\}.
	\end{equation*}
	By assumption, $\PD$ is nonempty, since $\zLP$ is bounded below. Note that the feasible region $\PD$ of~\eqref{eq.D_RLP} does not depend on $\zeta$. 
	
	Next, let $\Extr$ and $\R$ be the sets of extreme points and extreme rays of $\PD$, respectively. Recall that $\R$ represents the set of extreme elements of the recession cone
	$$
	 \left\{(e, h) \in \Re^{\ell} \times \Re^{m}  \;:\;   {C_C^{1:\ell}}^{\top} e + A^{\top}_C h \leq 0\right\}.
	$$
	By Farkas' lemma~\citep{farkas1902theorie}, $\zLP(\zeta)$ is finite (the problem~\eqref{eq.D_RLP} has an optimal solution) if and only if $\zeta \in \CLP$, where
	$$
	\CLP = \left\{\zeta\in\Re^{\ell}\;:\; \zeta^{\top} e + \beta^{\top} h \leq 0 \;\;  \forall (e,h) \in \R \right\}.
	$$
	Otherwise, we have $\zLP(\zeta) = +\infty$. Note that $\CLP$ is a polyhedron. 
	
	Putting all of this together, we have the following proposition characterizing $\zLP$ and yielding a finite combinatorial description. 
	\bproposition
	\label{epiGraphConvex}
	$\zLP$ is a polyhedral function over $\CLP$ and we have that
	\begin{equation} \label{eq.D_RLPVF} \tag{D-RLPVF}
		\zLP(\zeta) := \max_{(u,v) \in \PD} ({\zeta}^{\top} u + \beta^{\top} v) = 
		\max\limits_{(u, v) \in \Extr} \; ({\zeta}^{\top} u + \beta^{\top} v), \qquad \forall \zeta\in \CLP.
	\end{equation}
	\eproposition
	\begin{proof}
		By~\eqref{eq.D_RLPVF}, the epigraph of \LPVF{} over $\CLP$ is
		\begin{equation*}
			\left\{(\zeta, w)\in \CLP\times\Re\;: \; \zeta^{\top} u +  \beta^{\top} v  \leq w,\ \forall (u, v) \in \Extr \right\},
		\end{equation*}
		which is a polyhedron ($\Extr$ must be finite, since it is the set of extreme points of a polyhedron). Therefore, $\zLP$ is a polyhedral function over $\CLP$. The characterization~\eqref{eq.D_RLPVF} follows from strong duality and the well-known fact that when an LP has a finite optimum, that optimum is achieved at an extreme point. 
	\end{proof}

For an illustration, see Appendix~\ref{RLPVF}, where we detail the \LPVF{} obtained by setting $(x_1, x_2)=(0, 0)$ in Example~\ref{ex.MILPVF}. Since all polyhedral functions are convex, it follows that $\zLP$ is convex over $\CLP$. 

In the remainder of this section, we characterize the directional derivatives and the subdifferentials of the RLPVF. For the classical VF associated with an LP, it is well-known that the subdifferential at a given right-hand side vector within the finite domain of the VF is the set of all optimal solutions to the dual of the LP associated with that given right-hand side. A similar result can be obtained for the RLPVF by projecting this set of optimal dual solutions onto the subspace of the dual variables associated with the parametric constraints~\eqref{eq.parametric_ch2}. The projection will also be useful in characterizing the directional derivatives.

We start by describing the face of all optimal solutions of the LP dual~\eqref{eq.D_RLP} for a given $\zeta\in \CLP$. By adding an optimality constraint to the constraints of $\PD$, we get that the set of optimal solutions with respect to objective vector $(\zeta, \beta)$ is the face
\begin{equation}
OPT(\zeta) = \{(u, v) \in \Re^{\ell}_- \times \Re^m \;:\;  {C_C^{1:\ell}}^{\top} u + A^{\top}_C v \leq c^0_C ,~ \zeta^{\top} u + \beta^{\top} v = \zLP(\zeta)  \} 
\end{equation}
 of $\PD$ (itself a polyhedron). It is well-known how to project a polyhedron into a subspace (see, e.g., Theorem 3.46 of~\cite{ConCorZam14}). Applying this procedure, we get   
\begin{equation} \label{eq:dualoptfaceconvextr} 
		\begin{aligned} 
			\popt{\zeta} 
   &  = \{u \in \Re^{\ell}_- \;:\; \exists v \in \Re^m \textrm{ s.t. } {A^{\top}_C v \leq c^0_C - C_C^{1:\ell}}^{\top} u,~ \beta^{\top} v = \zLP(\zeta) - \zeta^{\top} u \} \\
   &  = \{u \in \Re^{\ell}_- \;:\; (C_C^{1:\ell}r + s \zeta)^\top u \leq r^{\top} c^0_C + s \zLP(\zeta), \; \forall (r, s) \in \mathcal{Q} \},
		\end{aligned}
	\end{equation}
where $\mathcal{Q}$ are the extreme rays of $\{(x_C, t) \in \Re^{n-r}_+ \times \Re \;:\; A_C x_C + t \beta = 0 \}$. 

Note that for $(\hat{x}_C, \hat{t})$ with $\hat{t} < 0$, we can scale the resulting ray so without loss of generality, $\hat{t} = -1$ and then $A_C \hat{x}_C = \beta$, so that $\hat{x}_C$ is an extreme point of $\{x_C \in \Re^{n-r}_+ \;:\; A_C x_C  = \beta \}$, the feasible region of the relaxation obtained by relaxing~\eqref{eq.parametric_ch2}. Then we can re-write the condition for $u$ to be in the projection as
\begin{equation}
(c^0_C - {C_C^{1:\ell}}^{\top} u)^\top r \geq \zLP(\zeta) - \zeta^\top u,
\end{equation}
which has an interpretation in terms of the reduced costs associated with $u$. Note that the above implies that if we precompute the set $\mathcal{Q}$, we can easily obtain a description of $\popt{\zeta}$ for any $\zeta \in \R^\ell$. 

We now state the main result of this section, which is that the set $\popt{\zeta}$ is precisely the subdifferential of the \LPVF{} at $\zeta \in \CLP$. We apply the standard definition for the subdifferential of a convex function~\citep{rockafellar1997convex}, which states that the subdifferential of $\zLP$ at $\hzeta \in \CLP$ is\footnote{This is obviously equivalent to $\partial \zLP(\hzeta) = \{g\in\Re^{\ell}\;:\;
		\zLP(\zeta) \geq g^{\top}(\zeta - \hzeta) + \zLP(\hzeta),\ \forall \zeta \in \Re^{\ell}\}$,
		since $\zLP(\zeta)=+\infty$ for $\zeta \not\in \CLP$.}
	$$
	\partial \zLP(\hzeta) = \left\{g\in\Re^{\ell}\;:\;
	\zLP(\zeta) \geq g^{\top}(\zeta - \hzeta) + \zLP(\hzeta),\ \forall \zeta\in \CLP \right\}.
	$$

	\begin{proposition} \label{prop:subdiffeqdualopt}
		For all $\zeta \in \CLP$,
		$$
		\partial \zLP(\zeta) = \popt{\zeta}.
		$$
	\end{proposition}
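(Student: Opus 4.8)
The plan is to establish the two inclusions $\popt{\zeta} \subseteq \partial \zLP(\zeta)$ and $\partial \zLP(\zeta) \subseteq \popt{\zeta}$ separately; the first is routine via weak duality, while the second is the substantive direction. For the easy inclusion, suppose $u \in \popt{\zeta}$, so by the definition of the projected optimal face there is a $v$ with $(u,v) \in \PD$ and $\zeta^{\top} u + \beta^{\top} v = \zLP(\zeta)$. Since $(u,v)$ is feasible for \eqref{eq.D_RLP} for \emph{every} right-hand side, weak duality gives $\zLP(\zeta') \ge (\zeta')^{\top} u + \beta^{\top} v$ for all $\zeta' \in \Re^{\ell}$ (trivially so when $\zLP(\zeta') = +\infty$). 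Subtracting the identity at $\zeta$ yields $\zLP(\zeta') \ge \zLP(\zeta) + u^{\top}(\zeta' - \zeta)$, i.e.\ $u \in \partial \zLP(\zeta)$.

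For the reverse inclusion, fix $g \in \partial \zLP(\zeta)$. I would first record two structural facts. First, $g \le 0$: because enlarging the parametric right-hand side only relaxes $\mathcal{S}(\cdot)$, $\zLP$ is nonincreasing, so testing the subgradient inequality at points obtained by increasing a single component of $\zeta$ (which keeps the test point in $\CLP$) forces each $g_i \le 0$. Second, the subgradient inequality says exactly that $\zeta$ minimizes $\zeta' \mapsto \zLP(\zeta') - g^{\top}\zeta'$, with minimum value $\gamma := \zLP(\zeta) - g^{\top}\zeta$. The key computation is to evaluate this minimum through the primal description of $\zLP$: writing it out over $(\zeta', x_C)$ and using $g \le 0$ to drive $\zeta'$ down to its lower bound $C_C^{1:\ell} x_C$, the parametric variable $\zeta'$ disappears and one is left with
$$\gamma = \min\{\bar c^{\top} x_C : A_C x_C = \beta,\ x_C \ge 0\}, \qquad \bar c := c^0_C - {C_C^{1:\ell}}^{\top} g.$$
This linear program is feasible (any point of $\mathcal{S}(\zeta)$ is feasible for it, and $\zeta \in \CLP$) and bounded below (its value is $\gamma > -\infty$), so it admits an optimal dual solution $v$ satisfying ${C_C^{1:\ell}}^{\top} g + A_C^{\top} v \le c^0_C$ and $\beta^{\top} v = \gamma$. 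Then $(g,v) \in \PD$ and $\zeta^{\top} g + \beta^{\top} v = \zLP(\zeta)$, i.e.\ $(g,v) \in OPT(\zeta)$, whence $g \in \popt{\zeta}$.

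The main obstacle is this reverse inclusion, and within it the crux is the elimination of $\zeta'$ in the key computation: I must verify that $g \le 0$ legitimately collapses the inner minimization to the lower bound, and that the resulting linear program is both feasible and bounded, so that LP strong duality supplies the multiplier $v$ completing $g$ to a dual-optimal pair. A pleasant feature of this route is that it never assumes $\zeta$ lies in the interior of $\CLP$, so the potentially unbounded subdifferentials at boundary points of the domain are handled uniformly; the recession directions of $\popt{\zeta}$ appearing there correspond exactly to the recession rays of the feasible region of the above linear program (the $s = 0$ members of $\mathcal{Q}$ in \eqref{eq:dualoptfaceconvextr}). As an alternative, one could observe that $\zLP(\zeta') = \sigma_{\PD}(\zeta', \beta)$ is a slice of the support function of $\PD$ and invoke the polyhedral chain rule to obtain $\partial \zLP(\zeta) = \proj_u \partial\sigma_{\PD}(\zeta,\beta) = \proj_u OPT(\zeta) = \popt{\zeta}$, but I prefer the self-contained duality argument above.
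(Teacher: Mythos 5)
Your proposal is correct, and while the easy inclusion $\popt{\zeta} \subseteq \partial \zLP(\zeta)$ is essentially identical to the paper's Lemma~\ref{lem:dualoptinsubdiff_restricted} (weak duality applied to a dual-feasible point that is optimal at $\zeta$), your argument for the reverse inclusion is genuinely different. The paper first records the outer description of $\epi \zLP$ in terms of the extreme points $\Extr$ and extreme rays $\R$ of $\PD$ (Lemma~\ref{lem:epi_z_lp}), observes that a subgradient $g$ makes $(\zeta, \zLP(\zeta))$ optimal for the LP that minimizes $-g^{\top}\zeta' + w$ over that polyhedron, and then dualizes \emph{that} LP: the multipliers $\lambda^*, \gamma^*$ aggregate into a point $(u^* + e^*, v^* + h^*) \in \PD$ that is optimal for the objective $(\zeta,\beta)$ and whose $u$-part equals $g$. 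You instead stay in the primal: you first extract $g \leq 0$ from monotonicity of $\zLP$, use it to collapse the partial minimization over $\zeta'$ onto $\zeta' = C_C^{1:\ell} x_C$, and apply strong duality to the resulting LP in $x_C$ with the reduced cost $\bar c = c^0_C - {C_C^{1:\ell}}^{\top} g$ to manufacture the completing multiplier $v$. Your route is more self-contained --- it needs only standard LP strong duality and never invokes the finite generation of $\PD$ or the epigraph description --- and it makes the sign condition $g \leq 0$ (needed for membership in $\Re^{\ell}_-$) explicit rather than inheriting it from the structure of $\PD$ as the paper does. The paper's route, on the other hand, reuses Lemma~\ref{lem:epi_z_lp} and the extreme-point/ray machinery that it needs elsewhere (e.g., in Proposition~\ref{prop:differentiable_VF} and the directional-derivative analysis), so the two lemmas pay for themselves. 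The key steps you flag as requiring care --- the legitimacy of driving $\zeta'$ to its lower bound when $g \leq 0$, and the feasibility and boundedness of the reduced LP (feasibility from $\zeta \in \CLP$, boundedness from $\gamma = \zLP(\zeta) - g^{\top}\zeta > -\infty$) --- all check out, and your observation that the argument treats boundary points of $\CLP$ uniformly is accurate and matches the paper's handling.
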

 
The proof relies on Lemmas~\ref{lem:dualoptinsubdiff_restricted}, \ref{lem:epi_z_lp}, and \ref{lem:subdiffindualopt_restricted}, all stated and proven in Appendix~\ref{appendix-proofs-sec3}. A consequence of this result is that $\zLP$ is differentiable at points in the interior of its domain at which the dual LP has a unique solution. We formalize this in Corollary~\ref{cor-sec3} in Appendix~\ref{appendix-proofs-sec3}.

We now examine the properties of the directional derivatives to highlight how the function behaves at the boundaries of the finite domain $\CLP$. From the definition of $\CLP$, the following characterization of its interior and boundary can be easily derived. 
	\begin{itemize}
		\item $\zeta \in \inter \CLP$ if and only if $\zeta^\top e + \beta^\top h < 0 \quad \forall (e, h) \in \R.$
		
		\item  $\zeta \in \CLP$ is on the boundary of $\CLP$ if and only if $\zeta^\top e + \beta^\top h \leq 0 \quad \forall (e, h) \in \R$ and there exists $(\hat{e}, \hat{h}) \in \R$ such that $\zeta^\top \hat{e} + \beta^\top \hat{h}  = 0$.
		
		\item  $\zeta \not\in \CLP$ if and only if there exists $(\hat{e}, \hat{h}) \in \R$ such that $\zeta^\top \hat{e} + \beta^\top \hat{h} > 0$.
	\end{itemize}
	
	Now consider a given $\hzeta$ on the boundary of $\CLP$. By the characterization above, there exists $(\hat{e}, \hat{h}) \in \R$ such that $\hzeta^\top \hat{e} + \beta^\top \hat{h}  = 0$. As an aside, it is interesting to observe that this means that specifically for points on the boundary of $\CLP$, we have $\zLP(\hzeta) < +\infty$ (there is a finite optimal value), while  the set of optimal solutions to the LP~\eqref{eq.D_RLP}
 is unbounded. This is because the rays of the optimal face have an objective value of zero.
 
 Let $d \in \Re^{\ell}$ be such that $d^\top \hat{e} > 0$. Then we have that $\hzeta + \epsilon d \not\in \CLP$ for all $\epsilon > 0$, since $(\hzeta + \epsilon d)^\top \hat{e} + \beta^\top \hat{h} > 0$. Thus, we can interpret $d$ as a direction pointing out of $\CLP$. Per the above discussion, we define the set 
$$
	\delta^-(\zeta) =  \cone(\{d \in \Re^\ell \;:\; \exists (e, h) \in \R \textrm{ such that } \zeta^\top e + \beta^\top h  = 0, \; d^\top e > 0\}) \setminus \{\mathbf{0}\},	
$$
	to be the set of all directions pointing out of $\CLP$ at $\zeta \in \CLP$. Note that with this definition, we have $\delta^-(\zeta) = \emptyset$ for $\zeta \in \inter \CLP$, as expected. 
	
	In what follows, we consider several results characterizing the directional derivatives of both the \LPVF{} and the \VF{}. For general $f: \Re^n \rightarrow \Re$, we take the directional derivative of $f$ at $\bar{x}$ in direction $d$ to be
	$$
	\nabla_d f(\bar{x}) = \lim_{t \searrow 0} \frac{f(\bar{x} + t d) - f(\bar{x})}{t}.
	$$
	For both the \LPVF{} and the \VF{} considered in the next section, this limit may go to $+\infty$ at points of discontinuity and we take the directional derivative to have the value $+\infty$ in such cases. For $\zLP$, we have continuity over the finite domain $\CLP$, but discontinuities at points on the boundary of $\CLP$, since we define $\zLP$ over the extended reals. Then the directional derivative $\nabla_d \zLP(\zeta)$ of $\zLP$ at $\zeta \in \CLP$ in direction $d$ is finite if and only if $d \not\in \delta^-(\zeta)$. For $\zeta\in\inter \CLP$, $\delta^-(\zeta) = \emptyset$ and the directional derivative is finite in all directions. 
	
	From the properties of convex functions and subdifferentials, we can alternatively characterize the directional derivative as
	$$
	\nabla_d \zLP(\zeta) = \max_{u\in\partial\zLP(\zeta)} u^{\top}d,
	$$
	where $\partial\zLP(\zeta)$ denotes the subdifferential of $\zLP$ at $\zeta \in \Re^{\ell}$. As a result, we have that for $\zeta \in \CLP$, we have that
	$$
	\nabla_d \zLP(\zeta) = 
	\begin{cases} 
		\max_{u\in\popt{\zeta}} u^{\top}d, & \textrm{if } d \not\in \delta^-(\zeta), \\
		+\infty, & \textrm{otherwise},
	\end{cases}
	$$
	for all $d \in \Re^{\ell}$. Proposition~\ref{prop:differentiable_VF} in Appendix~\ref{appendix-proofs-sec3}	formally establishes this from first principles, though it can also be seen as a corollary of our more general characterization of the subdifferentials of the \LPVF{}, which is the main result of this section. 

	\subsection{Structure of the Restricted Value Function}
	\label{sec:RVF}
	
	We now characterize the structure of the \VF{} by observing that the \VF{} is the minimum of a finite number of translations of functions of the form~\eqref{eq.RLPVF_ch2} for different values of the (previously) non-parametric RHS $\beta$. Each of these translated functions defines a \emph{stability region} over which the integer part of all solutions defining points on the graph of the associated RLPVF is fixed.
	
	To further develop our characterization of the VF, we define the following sets of integer parts of solutions by projecting $\S(\zeta)$ onto the space of the integer variables:
	\begin{equation*}
		\S_I(\zeta) = \proj_I \S(\zeta) = \left\{x_I\in \Z_+^r\;: \; (x_I, x_C) \in \S(\zeta)\right\}, \text{ and}
	\end{equation*}
	\begin{equation*}
		\S_I = \bigcup_{\zeta \in \C} \S_I(\zeta).
	\end{equation*}
	Thus, $\S_I$ is the set of all integer parts of points in $\S(\zeta)$ for some $\zeta \in \C$.
	For a given $\hat{x}_I \in \S_I$, the \textit{continuous restriction} (CR) with respect to $\hat{x}_I$ is the function\footnote{In the multiobjective optimization literature, the corresponding multiobjective LP is sometimes referred to as a {\em slice problem}~\citep{belotti2013branch}.}
	\begin{equation*}
		\bar{z}(\zeta;\ \hat{x}_I) = c^0_I \hat{x}_I + \zLP(\zeta - C_I^{1:\ell} \hat{x}_I;\ b - A_I \hat{x}_I) \quad \forall \zeta \in \C, \tag{CR}\label{eq.CR_ch2}
	\end{equation*}
	which is defined in terms of the previously defined function $\zLP$, but with an additional secondary parameter. In particular, $\zLP(\cdot;\ \beta)$ is similar to the previously defined $\zLP$ except that we now allow for a parametric family of such functions with different vectors for the non-parametric RHS $\beta$ in~\eqref{eq.RLPVF_ch2}. The form shown above is precisely a translation of a function of the form~\eqref{eq.RLPVF_ch2} for $\beta = b - A_I \hat{x}_I$. In the remainder of the study, we refer to functions $\bar{z}(\cdot;\ x_I)$ for $x_I \in \S_I$ as \emph{bounding functions}, since they bound the RVF from above, as demonstrated in Proposition~\ref{z_bar_bound}.
	
	\bproposition
	\label{z_bar_bound}
	For any $\hat{x}_I \in \S_I$, $\bar{z}(\cdot;\ \hat{x}_I)$ bounds $z$ from above.
	\eproposition
	\begin{proof}
		For $\hat{x}_I \in \S_I$, we have
		\begin{equation*}
			\begin{array}{ll@{}ll}
				\bar{z}(\zeta;\ \hat{x}_I) = c_I^0 \hat{x}_I + &\zLP(\zeta - C_I^{1:\ell} \hat{x}_I;\ b - A_I \hat{x}_I) \geq\\
				&\min\limits_{x_I \in \S_I} \left(c_I^0 x_I + \zLP(\zeta - C_I^{1:\ell} x_I;\ b - A_I x_I)\right) = z(\zeta), \quad \forall \zeta \in \C.\\
	\end{array}\end{equation*}\end{proof}

	Proposition~\ref{z_bar_bound} shows that any collection of points from $\S_I$ yields an upper approximation of $z$ simply by taking the minimum of the associated set of bounding functions, previously defined as $\bar{z}(\cdot;\ x_I)$ for $x_I \in \S_I$. The algorithm described in Section~\ref{sec:FiniteAlgorithm} constructs a subset of $\S_I$ that fully describes the RVF by iteratively approximating it from above. 
 
We first focus on characterizing the \VF{} as the minimum of a set of bounding functions, as defined in Proposition~\ref{z_bar_bound}.
	When $\XMO$ is bounded, $S_I$ is finite, so the number of such functions required is finite. 
As such, the main result of this section is the following discrete characterization.
	
	\btheorem
	\label{thm:Dis_Representation}
	Let $\S$ be any subset of $\S_I$ with the property that for any $\zeta \in \C$, there exist $x_I \in \S$ and $x_C \in \Re_+^{n - r}$ such that $C_I^{1:\ell} x_I + C_C^{1:\ell} x_C \leq \zeta,\ A_I x_I + A_C x_C = b$, and $c_I^0 x_I + c^0_C x_C = z(\zeta)$. Then for any $\zeta \in \C$ we have 
	\begin{equation*}
		z(\zeta) = \min_{x_I \in \S_I} \bar{z}(\zeta;\ x_I) = \min_{x_I \in \S} \bar{z}(\zeta;\ x_I).
	\end{equation*}
	\etheorem
	\begin{proof} Let $\zeta\in\C$. Since $\S \subseteq\S_I$, then by Proposition~\ref{z_bar_bound}, we have that
		\begin{equation*}
			z(\zeta) \leq \min_{x_I \in \S} \bar{z}(\zeta;\ x_I).
		\end{equation*}
		Now by definition of $\S$, there exists $\hat{x}_I\in\S$ and $\hat{x}_C \in \Re_+^{n - r}$ such that $C_I^{1:\ell} \hat{x}_I + C_C^{1:\ell} \hat{x}_C \leq \zeta,\ A_I \hat{x}_I + A_C \hat{x}_C = b$, and $c_I^0 \hat{x}_I + c^0_C \hat{x}_C = z(\zeta)$. So
		\begin{eqnarray*}
			\min_{x_I \in \S} \bar{z}(\zeta;\ x_I) & = &
			\min_{x_I \in \S} \left(c^0_I x_I + \zLP(\zeta - C_I^{1:\ell} x_I;\ b - A_I x_I)\right) \\
			& \leq & c^0_I \hat{x}_I + \zLP(\zeta - C_I^{1:\ell} \hat{x}_I;\ b - A_I \hat{x}_I)\\
			& \leq & c^0_I \hat{x}_I  + c^0_C \hat{x}_C = z(\zeta),
		\end{eqnarray*}
		where the first equation follows from~\eqref{eq.CR_ch2}, the subsequent inequality follows since $\hat{x}_I\in\S$, the next since, by its definition, $\hat{x}_C$ is feasible for $\zLP(\zeta - C_I^{1:\ell} \hat{x}_I;\ b - A_I \hat{x}_I)$, and the final equation follows from the definition of $\hat{x}_I$ and $\hat{x}_C$. We thus have both that $z(\zeta)$ is no greater and no less than $\min_{x_I \in \S} \bar{z}(\zeta;\ x_I)$, and the result follows.
	\end{proof}
We call a set $\S$ with the properties described in the above theorem a \emph{description} of the RVF. Of course, what constitutes a full ``description'' of this function is largely a philosophical question. Given the complexity of the function itself, there is obviously a trade-off between the time needed to construct a given description and the ease with which information about the function can be extracted. Determining the ``best'' description is itself a multiobjective problem, The description proposed here allows for $z$ to be evaluated in time polynomial in the size of the original problem data and $\lvert \S \rvert$. We also show below that with a little additional computation, we can also describe the function's gradients and subdifferential (the latter are polyhedra). 

It is clearly desirable to construct a description that is minimal with respect to the properties of Theorem~\ref{thm:Dis_Representation}. Although such a minimal description is clearly not necessarily unique, we denote any such a minimal description by the notation $\Smin$, since the particular one chosen does not affect our results. Each member $\hat{x}_I\in\Smin$ is associated with its own \emph{stability region}, denoted by $\C(\hat{x}_I)$, and defined to be
	\begin{equation*}
		\C(\hat{x}_I) = \left\{\zeta\in\C \;: \; z(\zeta) = \bar{z}(\zeta;\ \hat{x}_I)\right\}.
	\end{equation*}
	Informally, this is the subset of $\C$ for which the bounding function associated with $\hat{x}_I$ agrees with the \VF{}. We discuss the properties of stability regions below, but generally, stability regions do not have to be closed or connected, as exemplified in the following example.
	
	\bexample
	The stability regions in Example~\ref{ex.MILPVF} are depicted in Figure~\ref{Fig:stabRegions} for the domain $\zeta \geq -15$. Here, $\S_I = \Smin = \{(0,0), (0,1),(1,0),(1,1)$. 
	\begin{figure}[tb]
		\centering
		\includegraphics[width=.6\linewidth]{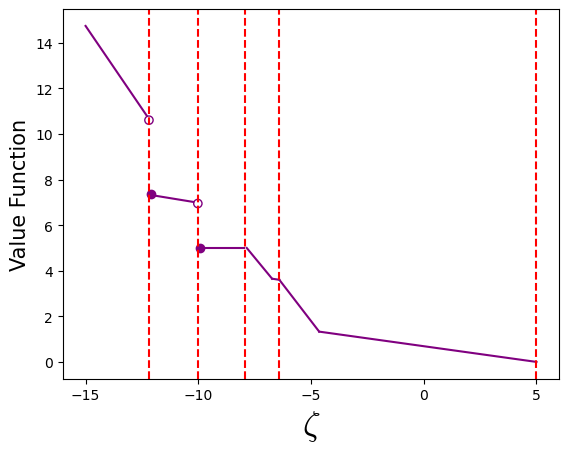}
		\caption{Stability regions and corresponding members of $\S_I$ in Example~\ref{ex.MILPVF}, for the region $\zeta\geq -15$, stability regions from left to right correspond to $(x_1,x_2) \in \{(1,0), (1,1), (0,1), (1,0), (0,0)\}.$ }\label{Fig:stabRegions}
	\end{figure}
The stability regions are
\begin{equation*}
\begin{aligned}
    \C((1,1)) &= [-12.167,-10) \\
    \C((0,1)) &= [-10,-7.833] \\
    \C((1,0)) &= [-55.464 , -12.167) \cup [-7.833, -6.4] \\
    \C((0,0)) &= [-6.4,+\infty). \\
\end{aligned}
\end{equation*} 
Note that the stability region for $(1,0)$ consists of two disjoint intervals, one of which is open. The stability region for $(0,0)$ is a single interval, closed at both ends. The stability region for $(0,1)$ is a single closed interval on which the \VF{} has a zero gradient. Further insights can be obtained by examining the bounding functions associated with each element of $\S_I$, provided in Appendix~\ref{appendix-whole_EF}.
	\eexample
	
Even when connected, a stability region may be nonconvex and open, as illustrated in Example~\ref{notConvexNotCompact}. 
	
	\bexample\label{notConvexNotCompact}
	Consider the RVF instance given by
	\begin{equation*}
\begin{aligned}
    z(\zeta) = \min \quad  &\,x_2 \\
    \emph{s.t.}\quad & x_3 \leq \zeta_1\\
    & x_4 \leq \zeta_2\\
    &2(1-x_1) \leq x_j \leq 2(1-x_1) + 5x_1, \quad j=2,3,4\\
    & x_2 + x_3 \geq 5x_1 \\
    & x_1 \in \{0,1\}\\
    & x_j \leq 5, \quad x_{j} \in \mathbb{R}_+, \quad j=2,3,4.
\end{aligned}
\end{equation*}\label{MILP_3D_instance}
	As in previous examples, although this is not in the standard form as in~\eqref{eq.RVF_ch2}, it can easily be made so with the addition of slack and surplus variables. Since these do not change the \VF{} or stability regions, we keep the instance in its natural form. The VF for this instance can be written explicitly as
	$$
	z(\zeta) = \left\{ \begin{array}{ll}
		2, & \zeta_1 \in [2,3] \text{ and } \zeta_2\geq 2, \\
		5 - \zeta_1, & \zeta_1\leq 5 \text{ and }\zeta_2 < 2 \text{ or } \zeta_1\in [0,2)\cup(3,5], \\
		0, & \text{otherwise},
	\end{array}\right.
	$$
	for $\zeta = (\zeta_1, \zeta_2) \in {\cal C} = \Re_+^2$. The stability region for $x_I=(x_1)=(1)$ is 
	$$
	\{\zeta\in \Re_+^2\;:\; \zeta_1 < 2 \text{ or } \zeta_1 \geq 3 \text{ or } \zeta_2 < 2\},
	$$
	which is shown in Figure~\ref{Fig:stabregion_nonconvex}, which displays both stability regions for this example. The stability region for $x_I=x_1=1$ is connected but is neither convex nor closed. 
	\eexample
	
	\begin{figure}[htbp]
		\centering
		\includegraphics[width=.5\linewidth]{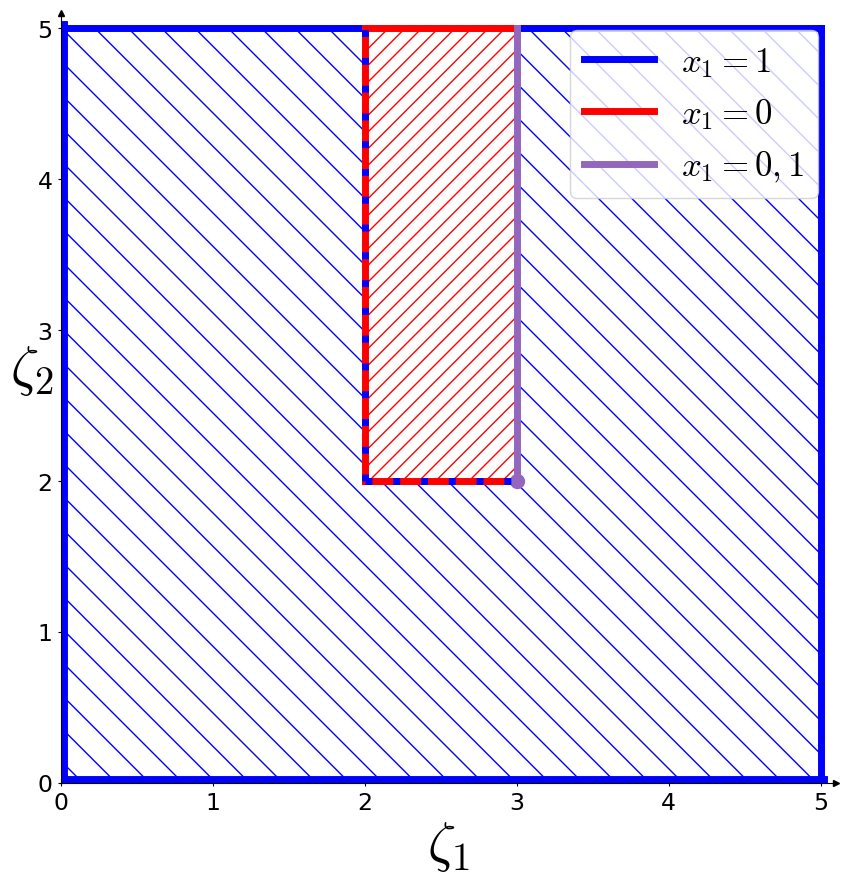}
		\caption{Stability regions of the instance in  Example~\ref{notConvexNotCompact}. The area shaded in red, including the red boundary lines, is not in the stability region for $x_1=1$, which is thus neither convex nor closed. The line segment shaded in purple is in the stability region of both $x_1=0$ and $x_1=1$.}\label{Fig:stabregion_nonconvex}
	\end{figure}
 
	In general, we do not expect there to be a minimal description that is unique; it is possible for multiple integer solutions $x^1_I, x^2_I \in \S_I$ with $x^1_I \not= x^2_I$ to satisfy $\bar{z}(\cdot;\ x^1_I) = \bar{z}(\cdot;\ x^2_I)$, e.g., in the trivial case of duplicate variables. Furthermore, for any point $(\zeta,z(\zeta))$ on the graph of the \VF{}, there may be more than one integer part $x_I$ with this point lying on the \VF{} for its associated bounding function. In other words, it is possible that stability regions may overlap, even for integer parts in a given minimal description. Such a case occurs in Example~\ref{ex.MILPVF}: points in the purple-colored line segment seen in Figure~\ref{Fig:stabregion_nonconvex} are in the stability regions of two different integer parts, both of which are in $\Smin$. Of course, for each integer part $x_I\in\Smin$, there must exist at least {\em one} point on the boundary of the epigraph of the associated bounding function that is on the boundary of the epigraph of the \VF{} itself and is not on the boundary of the epigraph of any other bounding function; otherwise, $\Smin$ would not be minimal. All we can say for sure is that elements of $\S_I$ that have an empty stability region will not be present in any minimal description. 
	
	We now turn our attention from the structure of stability regions and properties of minimal descriptions to properties of the \VF{} itself arising from the finite description of Theorem~\ref{thm:Dis_Representation}. Figure~\ref{Fig:vf_first_instance} illustrates the \VF{} of the MILP described in Example~\ref{ex.MILPVF}. The \VF{} is piecewise polyhedral, nonincreasing, and lower semi-continuous (here, nonincreasing means that $\zeta_1 \geq \zeta_2,~\zeta_1, \zeta_2 \in \Re^{\ell} \Rightarrow z(\zeta_1) \leq z(\zeta_2)$). We formalize this in Proposition~\ref{MVF_Pro}, which can be viewed as a generalization of the previous results by~\citep{nemhauser1988scope} and~\citep{bank1982non} for the full MILP VF. We articulate this within Proposition \ref{MVF_Pro} found in Appendix \ref{appendix-proofs-sec3}.

	The \VF{} may be discontinuous due to the fact that individual bounding functions themselves are discontinuous at the boundaries of their finite domains. When the \VF{} is discontinuous, the discontinuities occur at (some of) the boundaries between stability regions. At such points, as well as at points on the boundary of $\C$, there may be directions in which the directional derivative of $z$ is infinite, exactly as can occur at points on the boundary of $\CLP$ in the RLPVF. 
	
	Next, we describe the directional derivative of $z$ at a point $\zeta\in\C$ in terms of the faces of optimal solutions to the duals of the LPs associated with the bounding functions in directions for which the directional derivative is finite. To characterize these directions, we need the analogues of the sets of optimal extreme points and extreme rays characterizing the optimal faces of solutions to the LP dual, as we had in the RLPVF case, but with the integer part of the solution fixed. For $x_I \in \S_I$ and $\zeta \in \Re^{\ell}$, we define 
	\begin{equation} 
		\begin{aligned} 
			\popt{\zeta; x_I} 
   &  = \{u \in \Re^{\ell}_- \;:\; \exists v \in \Re^m \textrm{ s.t. } {A^{\top}_C v \leq c^0_C - C_C^{1:\ell}}^{\top} u,~ \beta^{\top} v = \bar{z}(\zeta;\ x_I) - (\zeta - C_I^{1:\ell} x_I)^{\top} u \} \\
   &  = \{u \in \Re^{\ell}_- \;:\; (C_C^{1:\ell}r + s (\zeta - C_I^{1:\ell} x_I))^\top u \leq r^{\top} c^0_C + s \bar{z}(\zeta;\ x_I), \; \forall (r, s) \in \mathcal{Q} \},
		\end{aligned}
	\end{equation}
where $\mathcal{Q}$ is defined as previously to be the extreme rays of $\{(x_C, t) \in \Re^{n-r}_+ \times \Re \;:\; A_C x_C + t \beta = 0 \}$.	We further need to know the set of integer vectors associated with the bounding functions that agree with the \VF{} at a given point, which we denote as
	\begin{equation*}
		\begin{array}{ll@{}ll}
			S^*_I(\zeta) &= \left\{x_I\in S_I(\zeta)\;:\; c^0_Ix_I + c^0_C x_C = z(\zeta),\ \exists x_C \text{ s.t. } (x_I,x_C)\in S(\zeta)\right\}\\
			&= \left\{x_I\in S_I(\zeta)\;:\;\bar{z}(\zeta;\ x_I) = z(\zeta)\right\}.
		\end{array}
	\end{equation*}
	With all of this notation established, let us now consider a point $\hzeta \in \Re^{\ell}$ at which the \VF{} is discontinuous. As with the RLPVF, the directional derivative of a bounding function associated with $x_I \in \S_I$ is infinite in direction $d \in \Re^{\ell}$ at $\hzeta$ if and only if $d$ is contained in the set
	\begin{equation*}
		\begin{aligned}
			\delta^-(\hzeta;\ x_I) = \cone&(\{d \in \Re^\ell \;:\; \exists (e, h) \in \R 
			\textrm{ such that } \\ 
			& (\hzeta - C_I^{1:\ell} x_I)^\top e + (b - A_I x_I)^\top h  = 0,
			\; d^\top e > 0\}) \setminus \{\mathbf{0}\}.
		\end{aligned}
	\end{equation*}
	Finally, for the directional derivative of $z$ itself to be infinite in direction $d \in \Re^{\ell}$ at $\hzeta \in \Re^{\ell}$, the derivatives of \emph{all} bounding functions that agree with $z$ at $\zeta$ must also be infinite, so we have the following proposition, whose proof appears in Appendix~\ref{appendix-proofs-sec3}.
	
	\bproposition \label{RVFDirDev} For $\zeta\in \C$ and $d \in \Re^{\ell}$, 
	\begin{align*}
		&\nabla_d z(\zeta)  = \min_{x_I \in \S_I^*(\zeta) \cap \Smin} \nabla_d \bar{z}(\zeta;\ x_I) \\
		 &= \begin{cases} 
			\min\limits_{x_I \in \S_I^*(\zeta) \cap \Smin} (\max_{u\in\popt{\zeta;\ x_I}} u^{\top} d), & \textrm{if } d \not \in \cap_{x_I \in \S_I^*(\zeta) \cap \Smin} \delta^-(\zeta;\ x_I), \\
			\infty, & \textrm{otherwise},
		\end{cases}
	\end{align*}
	where $\Smin$ is any minimal description of the \VF{}.
	\eproposition
	
	Even though the \VF{} is not convex, nor expected to be continuous, it is still possible to define a notion of subdifferential by considering the local structure of the VF at a given point, which is inherited from the bounding functions that are active at the point and necessary to a minimal description of the VF. 
	
	\begin{definition}[Local Subdifferential]
		Let $f:\Re^{\ell}\rightarrow\Re$ and $\zeta \in \Re^{\ell}$. Then $q\in\Re^{\ell}$ is a \emph{local subgradient} of $f$ at $\zeta' \in \Re^\ell$ if there exists $\epsilon>0$ such that
		\begin{equation}
			\label{eq:localsubgdef}
			f(\zeta) \ge f(\zeta') + q^{\top}( \zeta - \zeta'), \quad \forall \zeta \in \Re^\ell \textrm{ such that } \|\zeta - \zeta'\| \leq \epsilon.
		\end{equation}
		The \emph{local subdifferential} of $f$ at $\zeta$, denoted $\partial_L f(\zeta)$, is defined as the set of all local subgradients of $f$ at $\zeta$.
	\end{definition}
	Our definition of a local subgradient is closely related to that of the proximal subgradient given in~\citep{rockafellar2009variational} (Definition 8.45). The main difference is that we drop the quadratic term $\sigma \|\zeta'-\zeta\|^2$ in the definition of the proximal subgradient, since $\sigma = 0$ is always valid in our setting. In particular, for functions that are piecewise affine or convex, our local subdifferential and the proximal subdifferential are identical. 

    Finally, we are ready to state the main result of this section, that the intersection of the subdifferentials of these active and necessary bounding functions at a point yields the local subdifferential of the VF at that point. The proof of this result depends crucially on the following observation, stated formally as Lemma~\ref{lem:upifinactive} in Appendix~\ref{appendix-proofs-sec3}. Since (by Proposition~\ref{MVF_Pro}) $z$ is comprised of a minimum of a finite number of polyhedral functions, each associated with a member of some minimal description $\Smin$, as per in Theorem~\ref{thm:Dis_Representation}, then $z$ must agree with the upper bounding function associated with \emph{some} member of $\Smin$ in the local neighborhood of any $\zeta \in \C$. The remaining details of the proof appear in Appendix~\ref{appendix-proofs-sec3}.
	
	\bproposition \label{RVFDiff}
	The local subdifferential of $z$ at $\zeta\in\C$ is given by
 $$\partial_L z(\zeta)
	=\bigcap_{x_I\in S^*_I(\zeta)\cap\Smin}
	\popt{\zeta-C^{1:\ell}_Ix_I;\ b-A_Ix_I},$$
	where $\Smin$ is a minimal description of the \VF{}.
	\eproposition

	
	\section{The \VF{} and the EF}
	\label{sec:Relationship}
	We now formalize the relationship between the \VF{} and the EF. Informally, Theorem~\ref{thm:Relationship} below states that the boundary of the epigraph of the \VF{} and the EF effectively encode the same information, with the boundary of the epigraph including weak NDPs, while the  EF does not.
		
	\btheorem
	\label{thm:Relationship}
	The EF associated with~\eqref{eq.MultiObj_ch2} is a (possibly strict) subset of the boundary of the epigraph of the \VF{}. In particular, the following statements hold.
	\begin{enumerate}
		\item \label{relation-1} If $C_I x_I + C_C x_C$ belongs to the EF for some $(x_I,x_C) \in \XMO$, then $(C_I^{1:\ell} x_I + C_C^{1:\ell} x_C, c^0_Ix_I + c^0_Cx_C)$ is a point on the boundary of the epigraph of $z$.
		
		\item \label{relation-2} If $(\zeta, z(\zeta))$ is a point on the boundary of the epigraph of $z$, then there exists an efficient solution $(x_I,x_C) \in \XMO$ such that $c^0_Ix_I + c^0_Cx_C = z(\zeta)$ and $C_I^{1:\ell} x_I + C_C^{1:\ell} x_C \leq \zeta$. Further, $C_I^{1:\ell} x_I + C_C^{1:\ell} x_C = \zeta$ if and only if $\nabla_d z(\zeta) > 0$ for all $d \in \Re^{\ell}_-\setminus\{\mathbf{0}\}$.
	\end{enumerate} 
	\etheorem
	
	\begin{proof}
		\begin{enumerate}
			\item To prove statement~\ref{relation-1}, let $(\hat{x}_I,\hat{x}_C) \in \XMO$ be a given efficient solution and let $\hat{\zeta} = C_I^{1:\ell} \hat{x}_I + C_C^{1:\ell} \hat{x}_C$. Now $(\hat{x}_I,\hat{x}_C) \in \XMO$ and $\hat{\zeta} = C_I^{1:\ell} \hat{x}_I + C_C^{1:\ell} \hat{x}_C$ implies, from definitions, that $(\hat{x}_I,\hat{x}_C) \in \S(\hat{\zeta})$. We want to show that $(\hat{\zeta}, c^0_I\hat{x}_I + c^0_C \hat{x}_C)$ is a point on the boundary of the epigraph of the RVF $z$. Since $z(\hat{\zeta}) = \min_{(x_I, x_C) \in \S(\hat{\zeta})} c^0_I x_I + c^0_C x_C$, by definition, and since $(\hat{x}_I,\hat{x}_C) \in \S(\hat{\zeta})$, it must be that $z(\hat{\zeta})\leq c^0_I\hat{x}_I + c^0_C \hat{x}_C$. Assume, for the sake of contradiction, that $c^0_I \hat{x}_I + c^0_C \hat{x}_C \not= z(\hat{\zeta})$. 
			Then it must be that $c^0_I \hat{x}_I + c^0_C \hat{x}_C > z(\hat{\zeta})$. Now, by the definition of $z(\hat{\zeta})$, there must exist $(x_I, x_C) \in \S(\hat{\zeta})$ with $z(\hat{\zeta})=c^0_I x_I + c^0_C x_C$. Furthermore, $(x_I, x_C) \in \S(\hat{\zeta})$ implies $C_I^{1:\ell} x_I + C_C^{1:\ell} x_C \leq\hat{\zeta}=C_I^{1:\ell} \hat{x}_I + C_C^{1:\ell} \hat{x}_C$, while $c^0_I x_I + c^0_C x_C = z(\hat{\zeta}) < c^0_I \hat{x}_I + c^0_C \hat{x}_C$, so $C_I x_I + C_C x_C \lneqq C_I \hat{x}_I + C_C \hat{x}_C$.  This contradicts the hypothesis that $(\hat{x}_I, \hat{x}_C)$  is an efficient solution, and the result follows.
			
			\item To prove the first part of~\ref{relation-2}, let $\zeta \in \C$ be given, so that $(\zeta,z(\zeta))$ is a point on the boundary of the epigraph of $z$. Then there exists $(\tilde{x}_I,\tilde{x}_C) \in \S(\zeta)$ such that $c^0_I\tilde{x}_I + c^0_C \tilde{x}_C = z(\zeta)$. There are two cases. If $(\tilde{x}_I,\tilde{x}_C)$ is an efficient solution, the result follows trivially. Otherwise, there must exist an efficient solution $(x_I, x_C) \in \XMO$ that dominates $(\tilde{x}_I,\tilde{x}_C)$ at least weakly, i.e., such that $C_I x_I + C_C x_C \leq C_I\tilde{x}_I+C_C\tilde{x}_C$. Then $C^{1:\ell}_I x_I + C^{1:\ell}_C x_C \leq C^{1:\ell}_I\tilde{x}_I+C^{1:\ell}_C\tilde{x}_C \leq \zeta$, which means $(x_I,x_C)\in \S(\zeta)$ and we also have $c^0_Ix_I + c^0_Cx_C\leq c^0_I\tilde{x}_I + c^0_C \tilde{x}_C=z(\zeta)$. But by the optimality of $z(\zeta)$, we must also have that $c^0_Ix_I + c^0_Cx_C \geq z(\zeta)$, so we conclude that $c^0_Ix_I + c^0_Cx_C = z(\zeta)$. Since $(x_I,x_C)$ is an efficient solution, the result follows.

			For the second part, there are two directions. 
			\begin{enumerate}
				\item[$\Leftarrow$] Suppose that $\nabla_d z(\zeta) > 0$ for all $d\in \Re^{\ell}_-\setminus\{\mathbf{0}\}$ and assume for the sake of contradiction that $\zeta \gneqq C_I^{1:\ell} x_I + C_C^{1:\ell} x_C$. Let $\tilde{\zeta} = C_I^{1:\ell} x_I + C_C^{1:\ell} x_C$. Now $\S(\tilde{\zeta})\subseteq\S({\zeta'})\subseteq\S(\zeta)$ for all $\zeta'$ with $\tilde{\zeta}\leq{\zeta'}\leq\zeta$. Thus, since $(x_I,x_C)$ minimizes the objective over $\S(\zeta)$ and $(x_I,x_C)\in\S(\tilde{\zeta})$, it must be that $z(\zeta')=c_I^0 x_I + c_C^0 x_C$ for all $\zeta' \in [\tilde{\zeta},\zeta]$. So $z(\zeta') = z(\zeta)$ for all $\zeta' \in [\tilde{\zeta},\zeta]$. Recalling that $\zeta \gneqq \tilde{\zeta}$, it must thus be that $\hat{d}=\tilde{\zeta}-
				\zeta\in\Re^{\ell}_-\setminus\{\mathbf{0}\}$ and $\nabla_{\hat{d}} z(\zeta) = 0$, which is a contradiction to the initial hypothesis. Therefore, $\zeta = C_I^{1:\ell} x_I + C_C^{1:\ell}x_C$.
				
				\item[$\Rightarrow$] We prove the contrapositive. Therefore, suppose there exists $d \in \Re^{\ell}_-\setminus\{\mathbf{0}\}$ such that $\nabla_d z(\zeta) = 0$. Then, by Proposition~\ref{MVF_Pro}, there must exist $\tilde{\zeta}\lneqq\zeta$ with $z(\tilde{\zeta})=z(\zeta)$. 
				By the proof of the first part of~\ref{relation-2}, above, there must exist $(\tilde{x}_I,\tilde{x}_C)$ an efficient solution with $(\tilde{x}_I,\tilde{x}_C)\in\S(\tilde{\zeta})$ and $c^0_I\tilde{x}_I + c^0_C \tilde{x}_C =z(\tilde{\zeta})$. Now the former condition implies $C^{1:\ell}_I\tilde{x}_I+C^{1:\ell}_C\tilde{x}_C\leq \tilde{\zeta} \lneqq \zeta$ while the latter implies that $c^0_I\tilde{x}_I + c^0_C \tilde{x}_C =z(\zeta)$, since $z(\tilde{\zeta})=z(\zeta)$. The result follows.
			\end{enumerate}
		\end{enumerate}\end{proof}
	
	Since the theorem is somewhat technical, we now further explain the intuition of the result. Part~\ref{relation-1} is a straightforward statement that every point on the EF is also a point on the boundary of the epigraph of the RVF. The technicalities in Part~\ref{relation-2} arise from the aforementioned fact that there are points on the boundary of the epigraph of the RVF that are not in the EF. A point on the boundary of the epigraph that is not contained in the EF corresponds to a weak NDP and can thus be associated with the one or more NDPs that weakly dominate it. 
	
	For illustration, consider $(\zeta, \zLP(\zeta))$ on the boundary of the epigraph of $z$. If the function value strictly increases whenever any component of the argument $\zeta$ decreases, then the directional derivative is positive in all negative directions and this point must correspond to an NDP. 
 For example, in Example~\ref{ex.MILPVF}, $\nabla_d z(-11) > 0$ for $d = -1$ (see Figure~\ref{Fig:stabRegions}) and thus $(-11, z(-11))$ corresponds to an NDP.  
	
	On the other hand, if there is a direction $d$ in which the directional derivative is zero at $\zeta$, then the point $(\zeta, \zLP(\zeta))$ must be a weak NDP, since moving in the direction $d$ from $\zeta$ corresponds to strictly improving the value of one or more of the multiple objectives that correspond to the parametric constraints, while the objective of~\eqref{eq.MILP_ch2} remains unchanged. 
 For example, in Example~\ref{ex.MILPVF}, points in the interior of the stability region associated with $(0, 1)$ (e.g., $\zeta = -9$) correspond to weak NDPs since they are dominated by the point $(-10, 5)$. 
	
	The conditions involving directional derivatives also have another interpretation that is possibly more intuitive. Recall from the previous section that when $\nabla_d z(\hzeta)$ is finite for $\hzeta \in \Re^\ell$, we have that $\nabla_d z(\hzeta) = d^{\top}\hu$ for some optimal solution $\hu \in \PD$ to~\eqref{eq.D_RLP}. This allows us to re-interpret the above conditions involving directional derivatives in terms of solutions to~\eqref{eq.D_RLP}. In particular, the condition $\nabla_d z(\hzeta) > 0$ for all $d \in \Re^{\ell}_-\setminus\{\mathbf{0}\}$ is equivalent to $u < \mathbf{0}$ for all alternative optimal solutions $(u, v) \in \PD$ for~\eqref{eq.D_RLP} associated with $\hzeta$, while a zero directional derivative implies that the dual variable associated with one of the constraints is zero. This makes sense, as a zero dual value implies that the constraint can be tightened without changing the optimal solution, and this is exactly the condition that would indicate a given solution is not nondominated in the multiobjective context. 
	
	Note that when all directional derivatives in directions $d \in \Re^{\ell}_-\setminus\{\mathbf{0}\}$ are strictly positive (the function is strictly decreasing everywhere), then the boundary of the epigraph of the \VF{} and the EF exactly coincide. 
 This case is illustrated in Example~\ref{Ex:discont} below.
	
	\bexample
	We consider the following instance of~\eqref{eq.RVF_ch2}:
	\begin{equation*}
		\begin{aligned}
			z(\zeta) = \min ~ &\ x_1 + \frac{1}{4}x_2 + \frac{1}{2}y_1  -\frac{3}{4}y_2\\
			 \emph{s.t.} ~~  &\frac{4}{5}x_1 + \frac{1}{2}x_2 + \frac{1}{3}y_1 + 0y_2 \leq \zeta\\
			&\frac{3}{5}x_1 + \frac{1}{3}x_2 + \frac{1}{4}y_1  -\frac{1}{5}y_2 = 4\\
			&x_{i} \in \Z_+, \quad \forall i \in \{1, 2\}\\
			&y_{j} \in \Re_+, \quad \forall j \in \{1, 2\}.
		\end{aligned}
	\end{equation*}
	\label{Ex:discont}
	Figure~\ref{Fig:sameEF_RVF} shows the EF in Example~\ref{Ex:discont}, which exactly coincides with the boundary of the epigraph of the associated RVF.
	\eexample
	
	\begin{figure}[htbp]
		\centering
		\includegraphics[width=.6\linewidth]{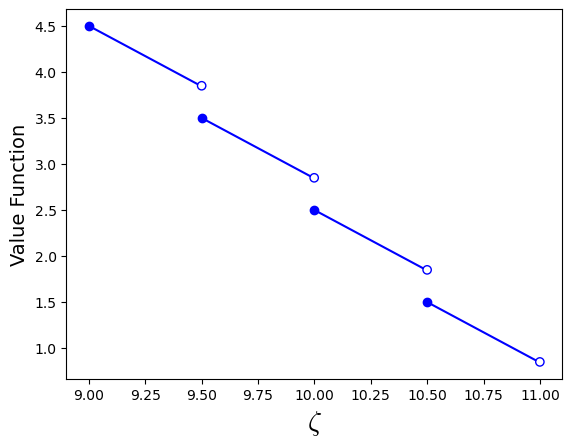}
		\caption{EF and RVF for the MILP in Example~\ref{Ex:discont}} \label{Fig:sameEF_RVF}
	\end{figure}
	
	It is worth noting that it is possible to avoid the difficulty of the weak NDPs that aren't part of the EF by changing from ``$\leq$''  to equality for the constraints associated with the objectives of the multiobjective version of the problem. However, in that case, a different difficulty is introduced---there may then be parts of the \VF{} that are \emph{increasing} (strictly positive directional derivative in the direction $d \in \Re^{\ell}_+$), and we then have that the boundary of the epigraph for \emph{those} parts of the \VF{} is not part of the EF. As such, this approach therefore does not make the statement of the theorem any cleaner. In Appendix~\ref{appendix-relationship_equality}, we state the equivalent Theorem~\ref{thm:Relationship_appendix}, noting that the only deviation from the original statement is replacing $\leq$ with $=$ for the parametric constraints.
	
	Before closing this section, we use the results of Theorem~\ref{thm:Relationship} to connect the description of the \VF{}, from Theorem~\ref{thm:Dis_Representation}, and the EF of the~\eqref{eq.MultiObj_ch2}. Specifically, we will see that not only does a minimal subset $\Smin$ give us a description of the VF but also a description of the EF (under the previous caveat that what constitutes a ``description'' is context dependent). In doing so, we provide some insight into the role of the objective vector $c^0$, which is distinguished in the~\eqref{eq.RVF_ch2} problem but is indistinguishable from other objectives in the~\eqref{eq.MultiObj_ch2}. 
	
	First, 
 suppose that the~\eqref{eq.MultiObj_ch2} has been solved in some way to obtain a set $\F$ that is the set of points on the EF. We wish to determine the \VF{}. The first part of Theorem~\ref{thm:Relationship} says 
	$$
	z(\zeta) = \min_{\gamma\in\F}\left\{\gamma_0\;:\; \gamma^{1:\ell}\leq\zeta\right\}, \qquad \forall \zeta\in\cal C.
	$$
	Alternatively, the~\eqref{eq.MultiObj_ch2} may be solved by providing some set of integer parts of efficient solutions, denoted as $\E_I\subseteq \S_I$ so that for all $\gamma\in\F$ there must exist an efficient solution $(x_I,x_C)\in\XMO$ with $x_I\in\E_I$ and $\gamma=C_Ix_I+C_Cx_C$. Then, by Theorem~\ref{thm:Dis_Representation}, there is a minimal description $\Smin\subseteq \E_I$. 
	
	Now we take the opposite perspective: suppose that we can determine a \VF{} (by finding $\Smin$ as per Theorem~\ref{thm:Dis_Representation}) and wish to determine a solution to a given~\eqref{eq.MultiObj_ch2} that has $p$ objectives, encoded as rows $1,\dots,p$ in the matrix $D\in\Re^{p\times n}$. It is well known in multiobjective optimization (and straightforward to prove) that adding an objective of the form $\lambda^{\top} D$ to the~\eqref{eq.MultiObj_ch2} does not change the set of efficient solutions, provided $\lambda\geq 0$, and nor does removing any duplicate objective. We may thus consider any restricted value problem with (i) $c^0=D^{k:k}$, the $k$th objective, and $C^{1:\ell}$ consisting of $D$ with the $k$th row deleted, so $\ell=p-1$, for any $k=1,\dots,p$, or (ii) $c^0=\lambda^{\top} D$ and $C^{1:\ell}=D$, so $\ell=p$, for any $\lambda\geq 0$. In any case of (i) or (ii), the set of efficient solutions of $\vinf\{Dx\;:\;x\in\XMO\}$ is precisely the set of efficient solutions of $\vinf\{Cx\;:\;x\in\XMO\}$. We may thus determine the EF of the former by finding $\Smin$ for the \VF{} associated with the latter. We are assured, by the first part of Theorem~\ref{thm:Relationship}, that if $\gamma\in\F$ where $(x'_I,x'_C)$ is an efficient solution with $\gamma=C_Ix'_I+C_Cx'_C$, then there exists $\zeta\in\cal C$ with $\zeta=\gamma^{1:\ell}$ and $z(\zeta)=\gamma_0$. Now Theorem~\ref{thm:Dis_Representation} ensures that for such $\zeta$, there must exist $(x_I,x_C)\in\XMO$ with  $x_I\in\Smin$ so that $\gamma=C_Ix_I+C_Cx_C$. Thus integer parts in $\Smin$ are sufficient to describe the EF, $\F$.   
	
	\section{Finite Algorithm for Construction}
	\label{sec:FiniteAlgorithm}
	
	In this section, we present an algorithm for constructing a discrete representation of both the \VF{} and the EF, as described in the previous section. 
 
	The purpose of the proposed algorithm is to construct a subset of $\S_I$, the elements of which constitute a description of the \VF{}, as codified in Theorem~\ref{thm:Dis_Representation}. The proposed algorithm is a generalized cutting-plane method that iteratively improves an upper approximation of the \VF{} until it converges to the true function. The ``cuts'' in this context refer to the convex bounding functions described in~\eqref{eq.RLPVF_ch2}. At iteration $k$, The upper approximation is the function $\bar{z}^k$, given by
	\begin{equation}
		\bar{z}^k (\zeta) = \min\left\{\min_{x_I \in S^{k}} \bar{z}(\zeta;\ x_I),\ U \right\}  \quad \forall \zeta \in \C,\label{eq.upper_approx}
	\end{equation}
	where $\S^{k}$ is the set of points in $\S_I$ identified so far and $U\in \Re$ is an upper bound on the VF, i.e. $z(\zeta)\leq U$ for all $\zeta\in\C$. The selection of the $U$ will be addressed in Section~\ref{sec:alginitterm}.
	
	The algorithm proceeds by identifying in iteration $k$ the point $\zeta^* \in \C$ at which the difference between the approximate value $\bar{z}^{k}(\zeta^*)$ and the true value $z(\zeta^*)$ is maximized. This point is given by $$\zeta^*=C_I^{1:\ell} x_I^{k+1} + C^{1:\ell}_C x_C^{k+1},$$
	where
	\begin{equation}\label{eq:defnextiterate}
		(x_I^{k+1}, x_C^{k+1}) \in \argmax\limits_{(x_I, x_C) \in \XMO} \left(\bar{z}^k(C_I^{1:\ell} x_I + C^{1:\ell}_C x_C) - (c^0_I x_I + c^0_C x_C)\right).
	\end{equation}
	Here $\bar{z}^k(C_I^{1:\ell} x_I^{k+1} + C^{1:\ell}_C x_C^{k+1}) = \bar{z}^k(\zeta^*)$ is the value of the upper approximation at $\zeta^*$ and---as we shall argue later---the true value at $\zeta^*$ is $z(\zeta^*)=c^0_I x_I^{k+1} + c^0_C x_C^{k+1}$. In the algorithm, this difference is denoted as	\begin{equation*}
		\begin{array}{ll@{}ll}
			\theta^{k+1} &= \bar{z}^k(C_I^{1:\ell} x_I^{k+1} + C^{1:\ell}_C x_C^{k+1}) - (c^0_I x_I^{k+1} + c^0_C x_C^{k+1})\\
			&= \max\limits_{(x_I, x_C) \in \XMO} \left(\bar{z}^k(C_I^{1:\ell} x_I + C^{1:\ell}_C x_C) - (c^0_I x_I + c^0_C x_C)\right).
	\end{array}\end{equation*}
	Note that $(x_I^{k+1}, x_C^{k+1})$ found via~\eqref{eq:defnextiterate} may be only {\em weakly} efficient, whereas only (strongly) efficient solutions are required to describe the \VF{}. Thus, before adding the integer part of $(x_I^{k+1}, x_C^{k+1})$ to $\S^{k}$, we first convert it to an efficient solution via a process common in multiobjective optimization: we replace $(x_I^{k+1}, x_C^{k+1})$ by an element of
	\begin{equation}\label{eq:conversion_to_NDP}
		\argmin_{(x_I, x_C) \in \XMO} \left\{\mathbf{1}^{\top} (C_I x_I + C_C x_C)\;:\;  C_I x_I + C_C x_C \leq C_I x_I^{k+1} + C_C x_C^{k+1}\right\}.
	\end{equation}
 It is easily proved that any optimal solution to~\eqref{eq:conversion_to_NDP} must also solve~\eqref{eq:defnextiterate}. (See Lemma~\ref{lem:conversionstillopt} in Appendix~\ref{appendix-not-wasteful}.) In the case of multiple optimal solutions for~\eqref{eq:defnextiterate}, solving~\eqref{eq:conversion_to_NDP} ensures an efficient solution for the multiobjective problem is selected. 
	Provided that the maximum difference between the upper approximation to the VF and the true VF is strictly positive, indicated by $\theta^{k+1}>0$, this yields a new stability region associated with a new member of $\S_I$, which we add to obtain $\S^{k+1}$. The algorithm is designed to iterate until the approximation is exact, detected by reaching $\theta^{k}=0$. 
 A high-level overview of the algorithm is provided below.
	\begin{algorithm}[htbp]
 \setcounter{AlgoLine}{0}
		\renewcommand\thealgorithm{RVF Algorithm}
		\SetAlFnt{\small}
		\SetAlCapFnt{\small}
		\SetAlCapNameFnt{\small}
		\small
		\SetAlgoLined
		\KwIn{$\XMO$, $C \in \Q^{(\ell+1) \times n}$, $U$ an upper bound on $z(\zeta)$ over $\zeta\in\C$.}
		\KwOut{$\S^k$ such that $z(\zeta) = \bar{z}^k(\zeta) = \min\limits_{x_I \in \S^k} \bar{z}(\zeta;\ x_I), \quad \forall \zeta \in \C.$}
		Initialize $\bar{z}^0(\zeta) = U$ for all $\zeta \in \Re^{\ell}$, $k = 0$, $\S^0 = \emptyset$, $\theta^0 = +\infty$.
		
		\While{$\theta^k > 0$}{
			Determine $(x_I^{k+1}, x_C^{k+1}) \in \argmax\limits_{(x_I, x_C) \in \XMO} \left(\bar{z}^k(C_I^{1:\ell} x_I + C^{1:\ell}_C x_C) - (c^0_I x_I + c^0_C x_C)\right)$. \label{step-3_ch2}
			
			Convert $(x_I^{k+1}, x_C^{k+1})$ to an efficient solution (for example, by using optimization problem~\eqref{eq:conversion_to_NDP}).
			
			Set $\S^{k+1} \leftarrow \S^{k}\cup \{x^{k+1}_I\}$.
			
			Set $\theta^{k+1} = \bar{z}^k(C_I^{1:\ell} x_I^{k+1} + C^{1:\ell}_C x_C^{k+1}) - (c^0_I x_I^{k+1} + c^0_C x_C^{k+1})$.
			
			$\bar{z}^{k+1}(\zeta) = \min \left\{\bar{z}^{k}(\zeta),\ \bar{z}(\zeta;\ x^{k+1}_I)\right\}$ for all $\zeta \in \C$.
			
			$k \leftarrow k+1$.
		}
		\caption{: Algorithm for constructing the \VF{} and the associated EF} \label{RVFAlgo_ch2}
	\end{algorithm}

	A few important details regarding the RVF Algorithm are worth noting. The RVF Algorithm generates a sequence of points in $\XMO$, but only stores the integer parts of these points in the set $\S^k$. 
 The upper approximation to the \VF{} at each iteration, denoted by $\bar{z}^k$, is initialized and then updated inductively at each iteration $k$ according to~\eqref{eq.upper_approx}. It is important to note that the upper approximation $\bar{z}^k$ is not explicitly constructed: the expression~\eqref{eq.upper_approx} is a {\em description} of the upper approximating function; it is the set $\S^k$ that is constructed by the algorithm. Thus, in the first step of each iteration, we need a method for optimization of the upper approximating function. We defer the explanation of this to Section~\ref{sec:MINLPderivation} below. Here we first illustrate the steps of the algorithm on Example~\ref{ex.MILPVF} and then establish its correctness.
	
	\begin{example}
		\textbf{Illustrative Example for the RVF Algorithm:}
		The steps of the~\ref{RVFAlgo_ch2} as applied to Example~\ref{ex.MILPVF} are depicted graphically in Figure~\ref{Fig:algoInstance} below. In the first iteration, the RVF Algorithm identifies the point ($\zeta, z(\zeta)) = (4\frac{4}{9}, 0)$, with stability region corresponding to $(x_1, x_2)=(0, 0)$, and updates the upper approximation to the blue convex function. In the subsequent iteration, the algorithm searches for the point with the largest difference between the current upper approximation and the RVF (the red piecewise linear function). The point of largest difference actually occurs at the far left, outside of the region shown in the figure, at $\zeta=-55.5$, but we illustrate the gap at $\zeta=-15$ (refer to Appendix~\ref{appendix-whole_EF} to view the full extent of the LP EFs). This point lies within the stability region corresponding to $(x_1, x_2)=(1, 0)$. The RVF Algorithm then updates the upper approximation again and, in the subsequent iteration, finds the point with the most difference between the upper approximation and the RVF, which occurs at $\zeta=-12\frac{1}{6}$. The RVF Algorithm continues in this manner, next finding $\zeta=-10$, until, at the next iteration, there is no such point. Thus the RVF Algorithm terminates in iteration 4 with $\theta = 0$ when the upper approximation and the VF are the same. 
		\begin{figure}[tb]
			\centering
			\begin{subfigure}{0.43\textwidth}
				\includegraphics[width=1\linewidth]{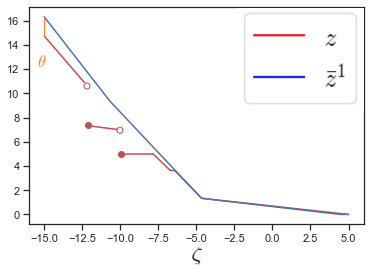}
			\end{subfigure}\hfil
			\begin{subfigure}{0.43\textwidth}
				\includegraphics[width=1\linewidth]{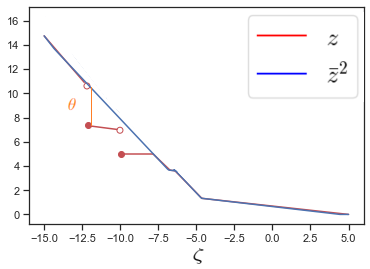}
			\end{subfigure}
			\medskip
			\begin{subfigure}{0.43\textwidth}
				\includegraphics[width=1\linewidth]{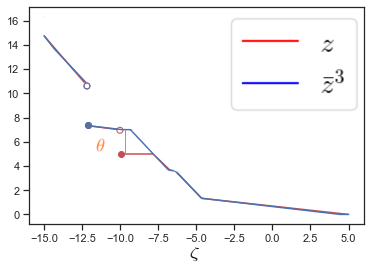}
			\end{subfigure}\hfil
			\begin{subfigure}{0.43\textwidth}
				\includegraphics[width=1\linewidth]{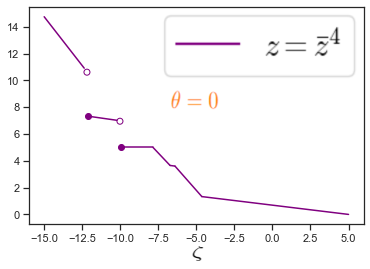}
			\end{subfigure}\hfil
			\caption{RVF and upper approximation in iterations of the RVF Algorithm with Example~\ref{ex.MILPVF}. The RVF $z$ is illustrated on the range $\zeta \in [-15, 5]$. Note that the first cut occurs at
$\zeta = -55.5$, which is outside of this range. Please refer to Appendix~\ref{appendix-whole_EF} to view the full EF or RVF.}
			\label{Fig:algoInstance}
		\end{figure}
	\end{example}

	\subsection{Correctness of the RVF Algorithm}
	\label{sec:RVFAlgCorrect}
	
	The proof of the correctness of the~\ref{RVFAlgo_ch2} relies on the fact that $\bar{z}^k$ is a nonincreasing function for the same reason that the \VF{} is nonincreasing, namely that it is the minimum of a finite set of nonincreasing bounding functions by~\eqref{eq.upper_approx}. Additionally, in iteration $k$ of the algorithm, $\theta^{k+1} = \max_{\zeta \in \C} \bar{z}^k(\zeta)-z(\zeta)$ (see Lemma~\ref{lem:thetaismaxoverzeta} in Appendix~\ref{appendix:correctness}). Furthermore, any optimal solution $(x_I^{k+1}, x_C^{k+1})$ to the optimization problem in Equation \eqref{eq:defnextiterate} having optimal value $\theta^{k+1} > 0$ has the property that $x_I^{k+1} \not\in {\cal S}^k$ (see Lemma ~\ref{lem:Subprobsolnnewimage} in Appendix~\ref{appendix:correctness}.)
	
	Now, with Theorem~\ref{Correctness}, we show the correctness of the RVF Algorithm by proving that it terminates finitely and returns the correct VF.
	
	\btheorem
	\label{Correctness}(Correctness of the RVF Algorithm)
	At termination, we have that $z(\zeta) = \bar{z}^k(\zeta)$ for all $\zeta \in \C$, so that $\S^k$  describes both the VF and the EF. Furthermore, the RVF Algorithm terminates in finitely many iterations under the assumption that $\XMO$ is bounded.  
	\etheorem
	\begin{proof}
		When $\XMO$ is bounded, $\S_I$ is finite. Therefore, by Lemma~\ref{lem:Subprobsolnnewimage}, which states that each iteration of the algorithm produces a new element of $\S_I$, the number of iterations must be finite. To show that $z = \bar{z}^k$ at termination,
		assume not for the sake of contradiction. Then there must exist $\zeta \in \Re^{\ell}$ such that $z(\zeta) < \bar{z}^k(\zeta)$. But, by Lemma~\ref{lem:thetaismaxoverzeta}, this is a contradiction to the assumption that $\theta^k = 0$ at termination. This completes the proof.
	\end{proof}
	
	Proposition~\ref{prop:newitnonemptystab} in Appendix~\ref{appendix-not-wasteful} shows that the algorithm has one seemingly advantageous property that we briefly describe here. As the description is being constructed, each newly added integer part is guaranteed to be associated with a nonempty stability region. In fact, it is not difficult to see that the added region must be \emph{non-redundant} at the time it is added. In other words, we must have that $\bar{z}^k \not= \bar{z}^{k+1}$ at iteration $k$. Equivalently, there exists $\zeta \in \C$ such that $\bar{z}^{k+1}(\zeta) < \bar{z}^k(\zeta)$, namely $\zeta^{k+1}$ from the proof of Lemma~\ref{lem:thetaismaxoverzeta}. Unfortunately, this property does not translate into a guarantee that the stability region associated with $x^k$ is non-redundant in the end, so the algorithm is not guaranteed to produce a minimal description. It would be possible to postprocess the description to make it minimal if this was important for a particular application. 
	
	\subsection{Solving the Subproblem}\label{sec:MINLPderivation}
	
	The~\ref{RVFAlgo_ch2} was presented in the previous section at a high level of abstraction to simplify the exposition. In the next two sections, we provide further details on how the algorithm can actually be implemented in practice. In this section, we start by clarifying how the subproblem that needs to be solved in each iteration can be solved in practice by formulating it as a standard mathematical optimization problem.
	
	The optimization problem in~\eqref{eq:defnextiterate}, solved in Step~\ref{step-3_ch2} of the algorithm, can be formulated as a mixed integer nonlinear optimization problem (MINLP) as follows. First, we model it using an auxiliary variable $\theta$ to move $\bar{z}^k$ into the constraints:
	\begin{equation}
		\begin{array}{ll@{}ll}
			\theta^{k+1}=\max  &\theta\\
			\text{subject to}&\theta \leq \bar{z}^k(C_I^{1:\ell} x_I + C_C^{1:\ell} x_C) - (c^0_I x_I + c^0_C x_C)\\
			& (x_I, x_C) \in \XMO\\
			& \theta \in \Re. 
		\end{array} \label{eq:SubProbModela}
	\end{equation}
	Next, we use~\eqref{eq.upper_approx} and~\eqref{eq.CR_ch2} to expand~\eqref{eq:SubProbModela} to the $k+1$ constraints
	\begin{equation}\label{eq:SubProbModelbU}
		\theta + c^0_Ix_I + c^0_C x_C \leq U,
	\end{equation}
	and
			\begin{equation}\label{eq:SubProbModelbzLP}
		\theta + c^0_I x_I + c^0_C x_C \leq c^0_I x_I^i + \zLP(C_I^{1:\ell} x_I + C_C^{1:\ell} x_C - C^{1:\ell}_I x_I^i;\ b - A_I x_I^i), \quad i = 1, \hdots, k.
	\end{equation}

	(Recall $\S^k=\{x^1_I,\dots,x^k_I\}$.)
	We can model the term involving $\zLP$ for each $i$ by using its LP dual problem, which via~\eqref{eq.D_RLP} is given by
	\begin{equation}
		\max\limits_{(u^i, v^i) \in \PD} (C_I^{1:\ell} x_I + C_C^{1:\ell} x_C - C^{1:\ell}_I x_I^i)^{\top} u^i + (b - A_I x_I^i)^{\top} v^i.
		\label{z_LP_algo}
	\end{equation}
	If $(x_I,x_C)\in\XMO$ satisfies~\eqref{eq:SubProbModelbzLP} for some $i$, then by strong LP duality, there must exist $(u^i, v^i)\in\PD$ (optimal for the dual of the LP after fixing the values of the integer variables to $x^i_I$) such that 
	\begin{eqnarray*}
		\theta + c^0_I x_I + c^0_C x_C & \leq & c^0_I x_I^i + \zLP(C_I^{1:\ell} x_I + C_C^{1:\ell} x_C - C^{1:\ell}_I x_I^i;\ b - A_I x_I^i) \\
		& = & c^0_I x_I^i + (C_I^{1:\ell} x_I + C_C^{1:\ell} x_C - C^{1:\ell}_I x_I^i)^{\top} u^i + (b - A_I x_I^i)^{\top} v^i.
	\end{eqnarray*}
	Conversely, if there exists some $(u^i, v^i)\in\PD$ such that $(x_I,x_C)\in\XMO$ satisfies
	$$
	\theta + c^0_I x_I + c^0_C x_C \leq c^0_I x_I^i + (C_I^{1:\ell} x_I + C_C^{1:\ell} x_C - C^{1:\ell}_I x_I^i)^{\top} u^i + (b - A_I x_I^i)^{\top} v^i,
	$$
	then by weak duality, it must be that $(x_I,x_C)$ satisfies~\eqref{eq:SubProbModelbzLP} for this $i$. This shows that the following explicit MINLP is a valid formulation of the optimization problem in~\eqref{eq:defnextiterate}:
	\begin{align}
		\theta^{k+1}=\max \quad &\theta& \label{eq:final_obj}\\
		\text{subject to} \quad &\theta + c^0_I x_I + c^0_C x_C \leq c^0_I x_I^i +& \nonumber\\
		&(C_I^{1:\ell} x_I + C_C^{1:\ell} x_C - C^{1:\ell}_I x_I^i)^{\top} u^i +
		(b - A_I x_I^i)^{\top} v^i, \quad &i = 1, \hdots, k &\label{eq:theta_const}\\
		&\theta + c^0_I x_I + c^0_C x_C \leq U  &\label{eq:bounding_const}\\
		& (u^i, v^i)\in \PD, \quad &i = 1, \hdots, k & \label{eq:dual_var_const}\\
		& (x_I, x_C) \in \XMO& \label{eq:primal_var_const}\\
		&\theta \in \Re.&\label{eq:theta_var_const}
	\end{align}
 
Note that the constraint~\eqref{eq:theta_const} makes the problem nonlinear: it has bilinear terms $(C_I^{1:\ell} x_I)^{\top} u^i$ and $(C_C^{1:\ell} x_C)^{\top} u^i$ linking the $u^i$ and $(x_I,x_C)$ variables for each $i$. However, the problem~\eqref{eq:final_obj}--\eqref{eq:theta_var_const}, which we refer to as the {\em MINLP subproblem}, can be solved either with an off-the-shelf nonconvex quadratic solver or possibly with a customized algorithm.  
 
	\subsection{Initialization and Termination}\label{sec:alginitterm}
	
	The most important aspect of algorithm initialization is determining an initial upper bound $U$. 
 One possible approach is to solve the LP relaxation of the MILP $\max\{c^0_Ix_I + c^0_Cx_C\;:\;(x_I,x_C)\in\XMO\}$. This would provide a valid initial upper bound for the optimization problem.
	
	It is important to note that in the final iteration, when $\theta^{k+1}$ takes value 0, there is no need to convert the solution $(x^{k+1}_I,x^{k+1}_C)$ into an NDP and add it to $\S^k$ {\em unless} its objective value $c^0_Ix^{k+1}_I + c^0_Cx^{k+1}_C$ happens to coincide with the initial upper bound, $U$. 
	
	Finally, observe that if the~\ref{RVFAlgo_ch2} is terminated early, while $\theta^k>0$, then the value of the final optimization problem solved in Step 2 provides a natural performance guarantee on the quality of the current upper approximation to the VF: it measures the maximum error between the true VF, $z$, and the upper approximation, $\bar{z}^k$, over any point in its domain.

	\subsection{Publicly Available Implementation}
	
	We provide a Python package, implemented using the RVF Algorithm, for enumerating all integer parts required to construct the NDPs for instances of multiobjective integer and mixed integer programs with an arbitrary number of objective functions. The MINLP subproblems encountered at each iteration are solved using Couenne~\citep{belotti2009couenne}. This package is available at \url{https://github.com/SamiraFallah/RestrictedValueFunction}. 

 
	\section{Conclusions}
	\label{sec:conclusions}
	In this study, we discussed the relationship between the restricted value function, \VF{}, and the EF of a multiobjective integer linear program~\eqref{eq.MultiObj_ch2}. We demonstrated that the EF lies on a subset of the boundary of the epigraph of the \VF{}. In so doing, we highlight an important relationship that connects two parts of the literature that had been considered distinct. We also demonstrated that the \VF{} is the minimum of polyhedral functions associated with \LPVF{} and discussed the structure of the \VF{}, including its continuity and differentiability. Finally, we showed that under the assumption that the $\XMO$ is bounded, there exists a finite description of both \VF{} and the EF in which each element of the description corresponds to a stability region. 
	
 In this context, we have introduced the \VF{} algorithm with the aim of identifying all efficient integer solutions essential for constructing the RVF or EF in the domain of a multiobjective optimization problem. 
 Our proposed algorithm offers an alternative that operates on quite different principles from existing algorithms for constructing the frontier. The algorithm is somewhat unique in the MO-MILP literature in its degree of generality---it can construct a discrete representation of the EF for arbitrarily many objectives and in the presence of continuous variables, as well as provide a performance guarantee if terminated early. We also provide a Python package that demonstrates the practical effectiveness of the proposed algorithm.

 The basic algorithm we introduced can be improved in several ways and work on enhancing its practical efficiency is ongoing. For example, it is clear that solving the MINLP subproblem in each iteration from a cold start is wasteful when the problem is only slightly modified from one already solved in the previous iteration. We hope that the ideas presented here will open up new avenues of inquiry with respect to algorithmic development in multiobjective optimization, as well as broadening the applications of such ideas into other areas not seen as related until now. 

\section*{Declaration of Competing Interest}
The authors declare that they do not have any identifiable conflicting financial interests or personal relationships that could have potentially influenced the findings presented in this paper.
 
	\bibliographystyle{plainnat} 
	\bibliography{refs} 
	
	\pagebreak
\begin{appendices}
 \section{An Instance of RLPVF}
	\label{RLPVF}
\bexample\label{ex.non-differ}
	In Figure~\ref{Fig:RLPVF}, we plot an instance of~\eqref{eq.RLPVF_ch2} obtained from Example~\ref{ex.MILPVF} by setting $(x_1, x_2)=(0, 0)$. In the plot, the affine 
	functions associated with each extreme point of $\PD$ are shown for a part of the finite domain. For this example, 
	the dual problem~\eqref{eq.D_RLPVF} can be expressed as 
\begin{align*}\label{dual_ex} \tag{Ex1-D}
		\zLP(\zeta) =  \max \;\; &\zeta u + 4v_1 + 5v_2 + 5v_3\\
		 \emph{s.t.}\;\; ~  &10u + 9v_1 \leq 0,~ -8u + 3v_1 + v_2 \leq 7,~u + 2v_1 \leq 10,~-7u + 6v_1 \leq 2\\
		&6u -10v_1  + v_3 \leq 10, ~u \leq 0, ~v_2 \leq 0, ~v_3 \leq 0,\\
	\end{align*}
	and we have that the set of extreme points, $\Extr$, and the set of extreme rays, $\R$, of $\PD$ are
	\begin{align*}
		\Extr = \{&(-0.15,  0.16, 0, 0), (0, 0, 0, 0), (-2.35, -2.41, -4.59, 0), \\
		& (-1.33, -1.22, 0, 0), (-1.61, -1.97, 0, 0),(0, -1, 0, 0)\}, \\
		\R = \{&(0, 0, 0, -1), (0, -1, 0, -10), (-1, -2.66, 0, -20.66), \\
		& (-1, -1.166, -4.5, -5.66), (0, 0, -1, 0)\}.
	\end{align*}
	As expected, the function defined by~\eqref{eq.RLPVF_ch2} is a polyhedral function with the nondifferentiable points being those at which the optimal basis changes.   
	\begin{figure}[htbp]
		\centering
		\includegraphics[width=0.80\linewidth]{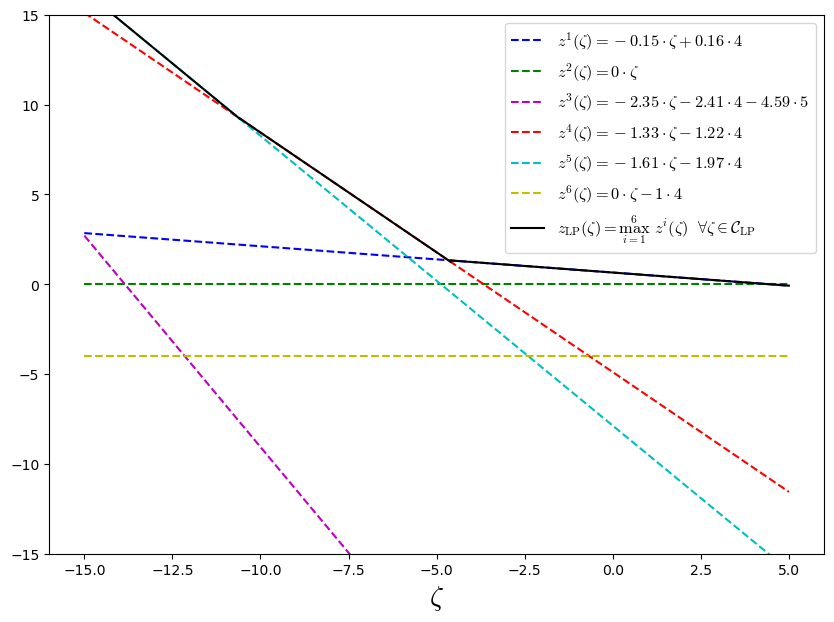}
		\caption{The RLPVF associated with Example~\ref{ex.MILPVF} when $(x_1, x_2)=(0, 0)$} \label{Fig:RLPVF}
	\end{figure} 
	Not all affine pieces are needed to describe the function and a minimal description is as follows. The finite domain is $\CLP = [-55.464, +\infty)$ and for $\zeta \in \CLP$, we have
\begin{align*} \label{dual_ex_bfs}
		\zLP(\zeta) = \max(&0, -2.35\zeta - 2.41\cdot 4 -4.59\cdot 5, -1.61\zeta -1.97\cdot 4, -1.33\zeta -1.22\cdot 4, -0.15\zeta + 0.16\cdot 4).
	\end{align*}
	Observe that the gradients of the RLPVF are precisely the optimal dual solutions corresponding to the parametric constraints. At points of nondifferentiability (the breakpoints), the directional derivatives in direction $d$ (in this case, we have $d \in \{1, -1\}$) of the RLPVF are given by $d^\top u$, where $u$ is one of the alternative optimal dual solutions corresponding to the parametric constraint.
	\eexample

		\section{The LP Frontiers of Example~\ref{ex.MILPVF}}
		\label{appendix-whole_EF}
		The entire EF for Example~\ref{ex.MILPVF} is shown below in Figure~\ref{Fig:completeFrontier}. The bounding functions associated with each integer vector of Example~\ref{ex.MILPVF} are illustrated in Figure~\ref{fig:fullEx1}. To highlight the detail in the bottom right parts of the frontiers more clearly, an enlarged view is presented in Figure~\ref{fig:truncleftEx1}.
		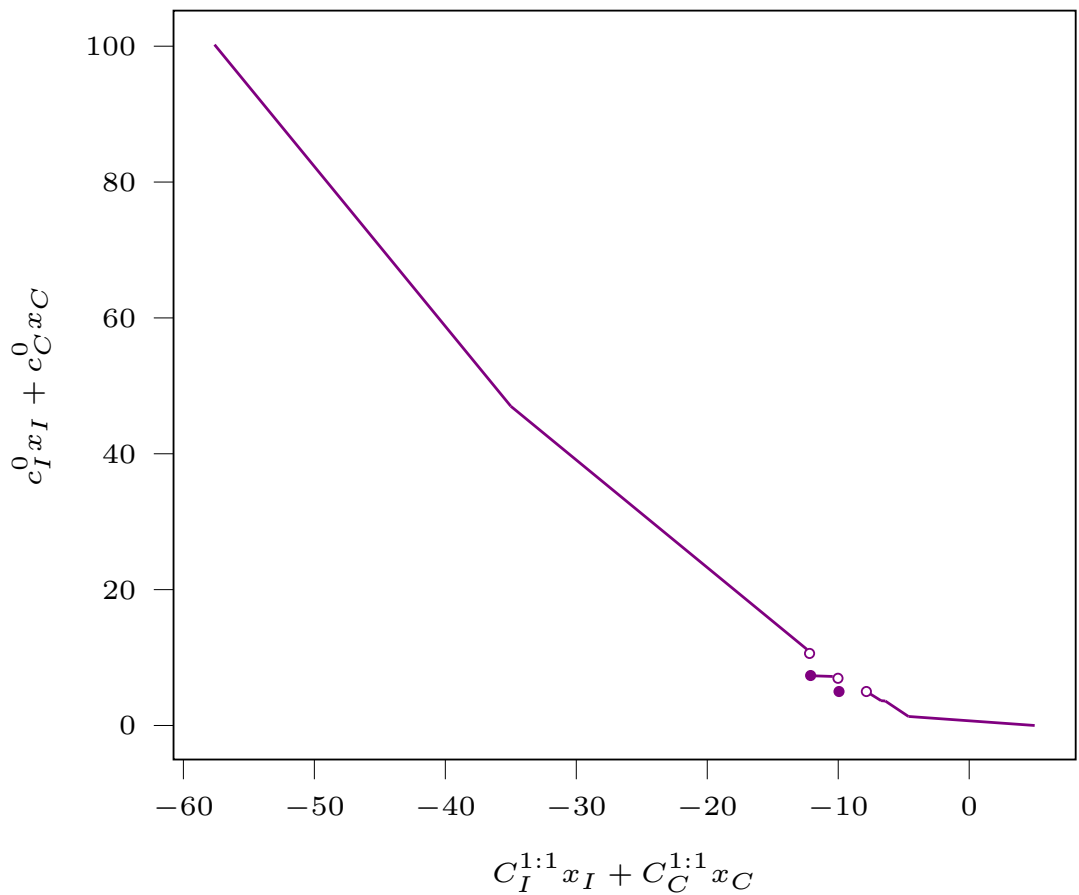
\begin{figure}[h]
			\centering
			\begin{tikzpicture}[scale=1.75]
				\definecolor{darkgray176}{RGB}{176,176,176}
				\definecolor{purple}{RGB}{128,0,128}
				\begin{axis}[
					tick align=outside,
					tick pos=left,
					x grid style={darkgray176},
					xlabel={\(\displaystyle C_I^{1:1} x_I + C_C^{1:1} x_C\)},
					xmin=-60.7523046092184, xmax=8.1310621242485,
					xtick style={color=black},
					y grid style={darkgray176},
					ylabel={\(\displaystyle c^{0}_I x_I + c^{0}_{C} x_C\)},
					ymin=-5.01132266, ymax=105.23777586,
					ytick style={color=black},
					yticklabel style = {font=\tiny},
					xticklabel style = {font=\tiny},
					ylabel style = {font=\tiny},
					xlabel style = {font=\tiny}
					]
					\addplot [draw=purple, fill=purple, mark=*, only marks, mark size=1pt]
					table{
						x  y
						-12.1084337349398 7.35481144
						-9.93975903614458 5
					};
					\addplot [draw=purple, mark=o, only marks, mark size=1pt]
					table{
						x  y
						-12.1887550200803 10.60722886
						-10.0200803212851 6.95
						-7.85140562248996 5
					};
					\addplot [semithick, purple]
					table {
						-57.6212424849699 100.2264532
						-34.9999720155731 46.9999541320932
					};
					\addplot [semithick, purple]
					table {
						-34.9999720155731 46.9999541320932
						-12.3887550200803 11.20722886
					};
					\addplot [semithick, purple]
					table {
						-12.1084337349398 7.32481144
						-10.4200803212851 7.2191987462
					};
					\addplot [semithick, purple]
					table {
						-7.55140562248996 4.64
						-6.72690763052209 3.65109218
					};
					\addplot [semithick, purple]
					table {
						-6.72690763052209 3.65109218
						-6.40562248995984 3.60407484
					};
					\addplot [semithick, purple]
					table {
						-6.40562248995984 3.60407484
						-4.63855421686747 1.32921932
					};
					\addplot [semithick, purple]
					table {
						-4.63855421686747 1.32921932
						5 0
					};
				\end{axis}
				
			\end{tikzpicture}
			\caption{The EF associated with Example~\ref{ex.MILPVF}} \label{Fig:completeFrontier}
		\end{figure}

		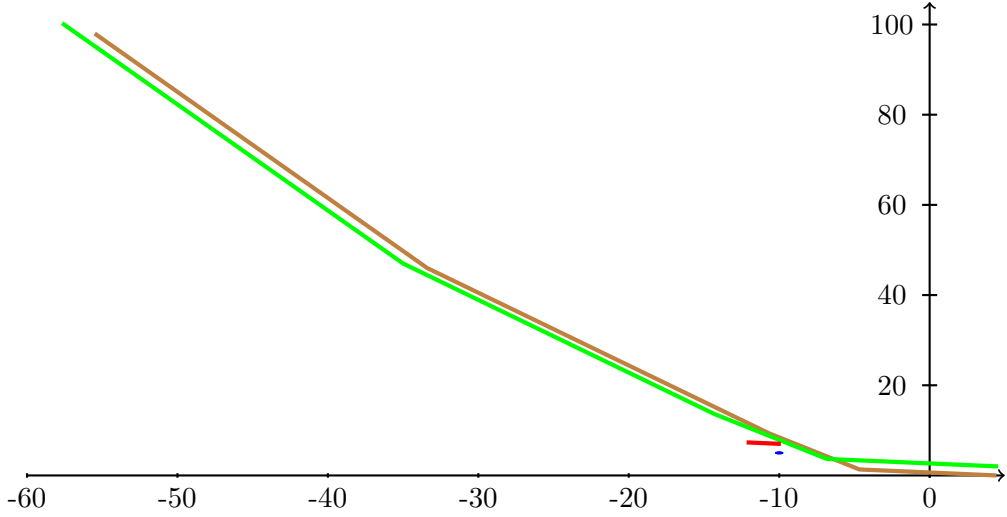
\begin{figure}[h]
			\centering
			\begin{tikzpicture}[yscale=0.06,xscale=0.2]
				\def \w {0.5} 
				\def \o {2.5}  
				\def \c {10} 
				\draw[->, Black, thick] (0,-\w) -- (0,105);
				\foreach \i in {1,...,5}
				{
					\draw[Black, thick] (-\w,2*\c*\i) -- (\w,2*\c*\i);
					\node at (-\o,2*\c*\i) {\the\numexpr 2*\c*\i};
				};
				\draw[->, Black, thick] (-60,0) -- (5,0);
				\foreach \i in {0,...,6}
				{
					\draw[Black, thick] (-\c*\i,-\w) -- (-\c*\i,\w);
					\node at (-\c*\i,-\o-\o) {\the\numexpr -\c*\i};
				};
				\draw[Sepia, ultra thick] (-55.5,98) -- (-33.4,46) --  (-10.66667,9.33333) -- (-4.66667,1.33333) -- (4.444444,0);
				\draw[ForestGreen, ultra thick] (-57.6667,100.3) -- (-35,47) -- (-14.3333,13.66667) -- (-6.83333,3.66667) -- (4.5555556,2);
				\draw[Red, ultra thick] (-9.88889,7) -- (-12.1667,7.33333);
				\draw [Blue, fill=blue] (-10,5) circle [radius=0.25];;
			\end{tikzpicture}
			\caption{The LP frontiers. Brown is for $(x_1,x_2)=(0,0)$. Green is for $(x_1,x_2)=(1,0)$. Red is for $(x_1,x_2)=(1,1)$. Blue is for $(x_1,x_2)=(0,1)$. Note the axes are unequally scaled.}\label{fig:fullEx1}
		\end{figure}
		
		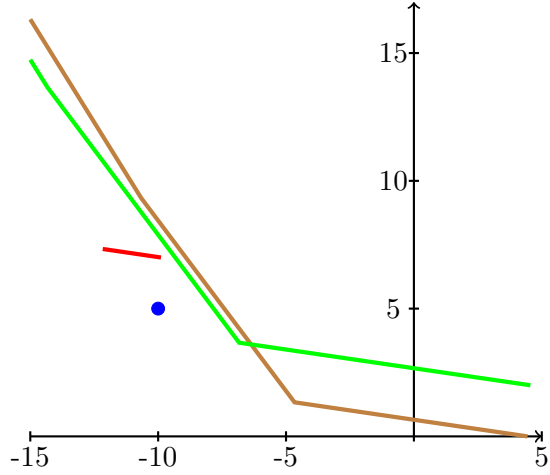
\begin{figure}[h]
			\centering
			\begin{tikzpicture}[scale=0.34]
				\draw[->, Black, thick] (0,-0.2) -- (0,17);
				\draw[Black, thick] (-0.2,10) -- (0.2,10);
				\node at (-0.8,10) {10};
				\draw[Black, thick] (-0.2,5) -- (0.2,5);
				\node at (-0.8,5) {5};
				\draw[Black, thick] (-0.2,15) -- (0.2,15);
				\node at (-0.8,15) {15};
				\draw[->, Black, thick] (-15,0) -- (5,0);
				\draw[Black, thick] (-15,-0.2) -- (-15,0.2);
				\node at (-15,-0.8) {-15};
				\draw[Black, thick] (-10,-0.2) -- (-10,0.2);
				\node at (-10,-0.8) {-10};
				\draw[Black, thick] (-5,-0.2) -- (-5,0.2);
				\node at (-5,-0.8) {-5};
				\draw[Black, thick] (5,-0.2) -- (5,0.2);
				\node at (5,-0.8) {5};
				\draw[Sepia, ultra thick] (-15,16.32258) -- (-10.66667,9.33333) -- (-4.66667,1.33333) -- (4.444444,0);
				\draw[ForestGreen, ultra thick] (-15,14.74194) -- (-14.3333,13.66667) -- (-6.83333,3.66667) -- (4.5555556,2);
				\draw[Red, ultra thick] (-9.88889,7) -- (-12.1667,7.33333);
				\draw [Blue, fill=blue] (-10,5) circle [radius=0.25];;
			\end{tikzpicture}
			\caption{The (truncated at the left) LP frontiers. Brown is for $(x_1,x_2)=(0,0)$. Green is for $(x_1,x_2)=(1,0)$. Red is for $(x_1,x_2)=(1,1)$. Blue is for $(x_1,x_2)=(0,1)$}\label{fig:truncleftEx1}
		\end{figure}

  \section{Proofs Regarding the Structure of RVF}
	\label{appendix-proofs-sec3}
Here, we present the detailed proofs of results from Section~\ref{sec:MILP_VF}, along with the statements of and proofs for supporting lemmas and propositions.

 \bproposition
	\label{prop:differentiable_VF}
	For $\zeta \in \CLP$, we have that
	$$
	\nabla_d \zLP(\zeta) = 
	\begin{cases} 
		\max_{u\in\popt{\zeta}} u^{\top}d, & \textrm{if } d \not\in \delta^-(\zeta), \\
		+\infty, & \textrm{otherwise},
	\end{cases}
	$$
	for all $d \in \Re^{\ell}$.
	\eproposition

\begin{proof}
		We let $\zeta\in\CLP$ and $d \in \Re^{\ell}$ be given and consider $\nabla_d \zLP(\zeta)$. There are two cases. 
		\begin{enumerate}
			\item[(i)] If $d \in \delta^-(\zeta)$, then we have already seen that $\nabla_d \zLP(\zeta) = +\infty$, since $\zLP(\zeta + \epsilon d) = +\infty$ for all $\epsilon > 0$. Equivalently, we have that $\popt{\zeta + \epsilon d} = \emptyset$ and thus $\max_{u\in\popt{\zeta}} u^{\top}d = +\infty$, proving the result in this first case.
			
			\item[(ii)] If $d \not\in \delta^-(\zeta)$, by~\eqref{eq.D_RLPVF}, there exists $\epsilon > 0$ such that $\zLP(\zeta + \epsilon d) < +\infty$. As such, there exists $(\hat{u}, \hat{v})\in\Extr$ such that 
			\begin{equation}\label{eq:dir1piece}
				\zLP(\zeta+td) = (\zeta+td)^{\top} \hat{u} + \beta^{\top} \hat{v}, \qquad \forall t\in[0,\epsilon].
			\end{equation} 
			Thus $\nabla_d \zLP(\zeta) = \hat{u}^{\top}d$. Note that by taking $t=0$ we have that $\hat{u} \in\popt{\zeta}$. Now suppose, for the sake of contradiction, that $\overline{u}^{\top}d > \hat{u}^{\top}d$ for some $\overline{u}\in\popt{\zeta}$. Then by~\eqref{eq:dualoptfaceconvextr}, there must exist $(u, v) \in\Extr$ with $u\in\popt{\zeta}$ and $u^{\top}d > \hat{u}^{\top}d$. But then for any $t\in (0,\epsilon]$, we have
			\begin{equation*}
				\begin{aligned}
					(\zeta+td)^{\top} u+ \beta^{\top} v = \zLP(\zeta) + td^{\top} u
					> \zLP(\zeta) + td^{\top} \hat{u}
					&=\zeta^{\top}\hat{u}+\beta^{\top} \hat{v} + td^{\top} \hat{u}\\
					&=\zLP(\zeta+td),
				\end{aligned}
			\end{equation*}
			where the final equality follows from~\eqref{eq:dir1piece},
			which contradicts~\eqref{eq.D_RLPVF} at $\zeta+td$. The result follows.
	\end{enumerate}
\end{proof}

\begin{proof}[Proof of Prosition~\ref{prop:subdiffeqdualopt}]

The result follows from lemmas~\ref{lem:dualoptinsubdiff_restricted}, \ref{lem:epi_z_lp}, and \ref{lem:subdiffindualopt_restricted}, which together show containment and reverse containment of $\popt{\zeta}$ and $\partial \zLP(\zeta)$. Lemma~\ref{lem:epi_z_lp} is a known result needed to prove Lemma~\ref{lem:subdiffindualopt_restricted}, which we prove here again because part of the proof is referred to later.

\end{proof}

\begin{lemma} \label{lem:dualoptinsubdiff_restricted}
		For all $\zeta \in \CLP$, we have that
		$$
		\popt{\zeta} \subseteq \partial \zLP(\zeta).
		$$
	\end{lemma}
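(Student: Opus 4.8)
The plan is to exploit the single most important structural feature of the dual problem~\eqref{eq.D_RLP}: its feasible region $\PD$ does not depend on the right-hand side parameter $\zeta$, only the objective $\zeta^{\top}u + \beta^{\top}v$ does. Consequently, any dual point that is optimal at $\zeta$ remains \emph{feasible} at every other $\zeta' \in \CLP$, and hence furnishes a lower bound on $\zLP(\zeta')$. Because the dual objective is affine in $\zeta$, that lower bound rearranges into exactly the subgradient inequality, giving the desired inclusion essentially for free.

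Concretely, I would fix $\zeta \in \CLP$ and take an arbitrary $u \in \popt{\zeta}$. By the definition of the projected optimal face, there exists $v \in \Re^m$ with $(u,v) \in \PD$ and $\zeta^{\top}u + \beta^{\top}v = \zLP(\zeta)$ (this complementary equality is precisely the condition defining $OPT(\zeta)$, of which $\popt{\zeta}$ is the $u$-projection). The goal is then to verify the defining subgradient inequality $\zLP(\zeta') \ge u^{\top}(\zeta' - \zeta) + \zLP(\zeta)$ for every $\zeta' \in \CLP$.

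For such a $\zeta'$, since $(u,v)$ is feasible for~\eqref{eq.D_RLP} regardless of the objective, the characterization of $\zLP$ as a maximum over $\PD$ in Proposition~\ref{epiGraphConvex} yields $\zLP(\zeta') \ge \zeta'^{\top}u + \beta^{\top}v$. Rewriting the right-hand side by adding and subtracting $\zeta^{\top}u$ and invoking optimality at $\zeta$,
\[
\zeta'^{\top}u + \beta^{\top}v = (\zeta' - \zeta)^{\top}u + \bigl(\zeta^{\top}u + \beta^{\top}v\bigr) = u^{\top}(\zeta' - \zeta) + \zLP(\zeta),
\]
which establishes the inequality and hence $u \in \partial \zLP(\zeta)$.

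I do not expect any genuine obstacle here; the whole argument is a one-line consequence of the $\zeta$-independence of $\PD$ together with the affineness of the dual objective in $\zeta$. The only points requiring a little care are bookkeeping. First, the subgradient inequality need only be checked over $\CLP$, as noted in the footnote accompanying the definition of $\partial \zLP$ (outside $\CLP$ it holds trivially because $\zLP = +\infty$ there). Second, one must keep straight that membership $u \in \popt{\zeta}$ supplies the paired witness $v$ through which the complementary equality $\zeta^{\top}u + \beta^{\top}v = \zLP(\zeta)$ is available — it is this equality, and nothing deeper, that converts the feasibility bound into the subgradient bound.
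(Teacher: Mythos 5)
Your proposal is correct and follows essentially the same route as the paper's proof: extract the dual witness $v$ from the definition of $\popt{\zeta}$, use the fact that $(u,v)$ remains feasible for~\eqref{eq.D_RLP} at every $\zeta' \in \CLP$ to get $\zLP(\zeta') \ge \zeta'^{\top}u + \beta^{\top}v$, and rearrange using the optimality equality at $\zeta$ to obtain the subgradient inequality. No gaps.
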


\begin{proof}
		Let $\hzeta \in \CLP$ and $\hu \in \popt{\hzeta}$ be given. We show that $\hu \in \partial \zLP(\hzeta)$. Let $\hat{v}$ be such that $(\hu,\hat{v})\in\PD$ and $\hzeta^{\top}\hu + \beta^{\top}\hat{v} = \zLP(\hzeta)$. Such $\hat{v}$ must exist by the definitions of $\popt{\hzeta}$. Then for any $\zeta \in \CLP$, we have
		\begin{equation}\label{eq:ghbeqzhb}
			\begin{aligned}
				(\zeta - \hzeta)^\top \hu + \zLP(\hzeta) & = \zeta^\top \hu - \hzeta ^\top \hu + \hzeta^\top \hu + \beta \hat{v}\\
                & = \zeta^\top \hu + \beta \hat{v} \\
                & \leq \zLP(\zeta).
		\end{aligned}\end{equation}
  It follows directly that $\hu \in \partial \zLP(\hzeta)$. Since $\hzeta$ was chosen arbitrarily, the result follows. 
\end{proof}

\begin{lemma}[\citep{ralphs2014value}] \label{lem:epi_z_lp}
	\begin{equation}
		\begin{aligned}
			\label{eq:epidual}
			\epi \zLP = \big\{(\zeta, w)\;:\; &\zeta^{\top} e + \beta^{\top} h \leq 0,\ \forall (e,h)\in \R \text{ and } \\
			&w \geq\zeta^{\top} u + \beta^{\top} v,\ \forall (u,v)\in\Extr \big\}.
		\end{aligned}
	\end{equation}
\end{lemma}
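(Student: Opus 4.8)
The plan is to establish the set equality by a direct slice-by-slice comparison of the two sides, splitting on whether $\zeta \in \CLP$ or $\zeta \notin \CLP$. Writing $P$ for the set on the right-hand side of~\eqref{eq:epidual}, the key observation is that the two blocks of constraints defining $P$ have clean interpretations: the inequalities $\zeta^{\top} e + \beta^{\top} h \leq 0$ for all $(e,h)\in\R$ are exactly the defining condition for membership in $\CLP$, so the first block encodes $\zeta \in \CLP$; and the inequalities $w \geq \zeta^{\top} u + \beta^{\top} v$ for all $(u,v)\in\Extr$ encode $w \geq \max_{(u,v)\in\Extr}(\zeta^{\top} u + \beta^{\top} v)$. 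With these readings in place, the proof reduces to showing that $\epi \zLP$ and $P$ agree over each fixed $\zeta$.

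For $\zeta \in \CLP$, I would invoke Proposition~\ref{epiGraphConvex}, which gives the finite value $\zLP(\zeta) = \max_{(u,v)\in\Extr}(\zeta^{\top} u + \beta^{\top} v)$; hence $(\zeta,w)\in\epi\zLP$ if and only if $w$ meets or exceeds this maximum, which is precisely the condition cutting out $P$ on this slice of the domain. For $\zeta \notin \CLP$, the Farkas-lemma characterization recalled just before Proposition~\ref{epiGraphConvex} gives $\zLP(\zeta) = +\infty$, so no pair $(\zeta,w)$ with $w \in \Re$ lies in $\epi\zLP$; at the same time, $\zeta \notin \CLP$ means some extreme ray violates $\zeta^{\top} e + \beta^{\top} h \leq 0$, so $(\zeta,w)\notin P$ as well. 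The two sets thus coincide on every vertical slice, which yields the claimed equality.

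Because both required ingredients---the Farkas characterization of $\CLP$ and the extreme-point formula of Proposition~\ref{epiGraphConvex}---are already available, I do not anticipate a genuine obstacle here; the argument is essentially a bookkeeping assembly, and the only point demanding care is handling the extended-real value $+\infty$ consistently so that the epigraph excludes exactly the slices over $\Re^{\ell}\setminus\CLP$. If one instead wished to prove the statement from first principles without Proposition~\ref{epiGraphConvex}, the main work would shift to a Minkowski--Weyl decomposition of $\PD$ into its extreme points $\Extr$ and extreme rays $\R$, followed by the case split on finiteness of $\sup_{(u,v)\in\PD}(\zeta^{\top} u + \beta^{\top} v)$; that route is where the real effort would lie, but it is unnecessary given the results already in hand.
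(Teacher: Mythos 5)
Your argument is correct and rests on exactly the same two ingredients as the paper's own proof: the Farkas-lemma characterization of $\CLP$ via the extreme rays $\R$, and the extreme-point formula $\zLP(\zeta)=\max_{(u,v)\in\Extr}(\zeta^{\top}u+\beta^{\top}v)$ from Proposition~\ref{epiGraphConvex}. Your slice-by-slice assembly is, if anything, a more explicit write-up of the set equality than the paper's version, which mainly recalls these facts (and adds some supporting-hyperplane material only because it is reused later in Lemma~\ref{lem:subdiffindualopt_restricted}).
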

\begin{proof} 
 By Proposition~\ref{epiGraphConvex}, $\zLP$ is a polyhedral function, so its epigraph, given by
	$$
	\epi \zLP = \left\{(\zeta,w)\in\Re^{\ell}\times\Re\;:\; \zeta \in \CLP,\ w \geq \zLP(\zeta)\right\},
	$$
	is a polyhedron. By the definition of the subdifferential, for any $\hzeta \in \CLP$ and subgradient $g \in \partial \zLP(\hzeta)$, the hyperplane
	\begin{equation*}
		\label{eq:hyper}
		\left\{(\zeta, w) \in \Re^{\ell} \times \Re: (-g,1)^{\top} \left( \begin{array}{c} \zeta \\ w \end{array} \right) = \zLP(\hzeta) - g^{\top}\hzeta \right\},
	\end{equation*}
	is a hyperplane that supports epi $\zLP$ at $\zeta = \hzeta$ and the inequality
	\begin{equation}
		\label{eq:hypervalid}
		(-g,1)^{\top} \left( \begin{array}{c} \zeta \\ w \end{array} \right) \geq \zLP(\hzeta) - g^{\top}\hzeta,
	\end{equation}
	is satisfied by all $(\zeta, w)\in\epi\zLP$, with equality attained at $(\hzeta,\zLP(\hzeta))$. 
	
	The proof also makes use of the extreme points and extreme rays of the dual LP feasible set. Recall that $\Extr$ and $\R$ are the (finite) sets of extreme points and extreme rays of $\PD$, respectively. Recall that for $\zeta \in \CLP$, we have that 
	\begin{equation*}
		\label{eq:bfeasextr}
		\zeta^{\top} e + \beta^{\top} h \leq 0, \qquad\forall (e, h)\in \R,
	\end{equation*}
	and that by LP duality, we have
	\begin{equation*}
		\label{eq:dualLPvalextr}
		\zLP(\zeta) = \max \left\{\zeta^{\top} u + \beta^{\top} v \;:\;\ (u,v)\in\Extr\right\}.
	\end{equation*} \end{proof}

	\begin{lemma} \label{lem:subdiffindualopt_restricted}
		For all $\zeta \in \CLP$,
		$$
		\partial \zLP(\zeta) \subseteq \popt{\zeta}.
		$$
	\end{lemma}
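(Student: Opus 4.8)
The plan is to prove the reverse containment by translating the subgradient inequality into a valid inequality for $\epi\zLP$ and then decomposing it against the explicit polyhedral description of $\epi\zLP$ furnished by Lemma~\ref{lem:epi_z_lp}. Fix $\zeta\in\CLP$ and let $g\in\partial\zLP(\zeta)$. By definition of the subdifferential, $\zLP(\zeta')\ge \zLP(\zeta)+g^\top(\zeta'-\zeta)$ for all $\zeta'\in\CLP$, so for every $(\zeta',w)\in\epi\zLP$ we have $w\ge\zLP(\zeta')\ge\zLP(\zeta)+g^\top(\zeta'-\zeta)$. Equivalently, the inequality $g^\top\zeta'-w\le g^\top\zeta-\zLP(\zeta)$ is valid for the polyhedron $\epi\zLP$, which is nonempty since $(\zeta,\zLP(\zeta))\in\epi\zLP$.

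Next I would invoke the affine form of Farkas' lemma, namely that a valid inequality for a nonempty polyhedron is a nonnegative combination of its defining inequalities with dominated right-hand side. Writing the defining inequalities from \eqref{eq:epidual} as $u^\top\zeta'-w\le-\beta^\top v$ for $(u,v)\in\Extr$ and $e^\top\zeta'\le-\beta^\top h$ for $(e,h)\in\R$, there exist multipliers $\mu_{(u,v)}\ge0$ and $\lambda_{(e,h)}\ge0$ with $(g,-1)=\sum_{\Extr}\mu_{(u,v)}(u,-1)+\sum_{\R}\lambda_{(e,h)}(e,0)$ and $g^\top\zeta-\zLP(\zeta)\ge-\sum_{\Extr}\mu_{(u,v)}\beta^\top v-\sum_{\R}\lambda_{(e,h)}\beta^\top h$. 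Matching the $w$-coordinate forces $\sum_{\Extr}\mu_{(u,v)}=1$, so the $\mu$'s are convex weights, while matching the $\zeta'$-coordinate gives $g=\sum_{\Extr}\mu_{(u,v)}u+\sum_{\R}\lambda_{(e,h)}e$.

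Then I would assemble the candidate dual solution. Let $(\bar u,\bar v)=\sum_{\Extr}\mu_{(u,v)}(u,v)$, a convex combination of extreme points of $\PD$ (hence in $\PD$), and let $(\bar e,\bar h)=\sum_{\R}\lambda_{(e,h)}(e,h)$, a nonnegative combination of extreme rays (hence in the recession cone of $\PD$). Setting $v^\ast=\bar v+\bar h$ gives $g=\bar u+\bar e$, so $(g,v^\ast)=(\bar u,\bar v)+(\bar e,\bar h)\in\PD$; in particular $g\le\mathbf 0$ automatically. Optimality then follows by sandwiching: weak duality gives $\zeta^\top g+\beta^\top v^\ast\le\zLP(\zeta)$, while the right-hand-side domination inequality above rearranges to $\zeta^\top g+\beta^\top v^\ast\ge\zLP(\zeta)$. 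Hence $(g,v^\ast)$ attains the dual optimum, which by the definition \eqref{eq:dualoptfaceconvextr} of $\popt{\zeta}$ means precisely $g\in\popt{\zeta}$, completing the containment.

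The main obstacle is the clean application of the affine Farkas/domination step: one must ensure the polyhedron is nonempty, that both families of constraints---the facet inequalities indexed by $\Extr$ and the domain-restricting inequalities indexed by $\R$---are carried along simultaneously, and that the right-hand-side bound is tracked precisely, since it is exactly this bound that upgrades weak duality to the optimality equation. Everything else is bookkeeping: the $w$-coordinate pins $\sum\mu=1$, and splitting the combination into a point of $\PD$ plus a recession direction is what produces a genuinely dual-feasible $(g,v^\ast)$ rather than a merely formal identity.
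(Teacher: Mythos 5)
Your proof is correct and takes essentially the same route as the paper's: both use the polyhedral description of $\epi \zLP$ from Lemma~\ref{lem:epi_z_lp} to decompose the supporting inequality induced by $g$ into a convex combination of the extreme-point facets plus a nonnegative combination of the recession facets, forcing $\sum\mu=1$ from the $w$-coordinate and assembling a dual-feasible, dual-optimal pair whose $u$-part is $g$. The only cosmetic difference is that the paper obtains the multipliers by writing down the LP of minimizing $(-g,1)^{\top}(\zeta,w)$ over $\epi\zLP$ and taking its LP dual, whereas you invoke the affine Farkas lemma directly on the valid inequality; these are the same tool.
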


\begin{proof}
		Let $\hzeta \in \CLP$ and $g\in \partial \zLP(\hzeta)$ be given. Then by Lemma~\ref{lem:epi_z_lp} (and \eqref{eq:hypervalid}), it must be that $(\hzeta, \zLP(\hzeta))$ is an optimal solution of the LP
		\begin{equation} \label{eq:extrLP}
			\begin{aligned}
				\left\{\begin{array}{lll}
					\min & (-g,1)^{\top} \left( \begin{array}{c} \zeta \\ w \end{array} \right) \\
					\text{s.t.} & (\zeta, w)\in \epi \zLP
				\end{array}\right.
				= \left\{\begin{array}{lll}
					\min_{\zeta, w} &-g^{\top} \zeta + w \\
					\text{s.t.} & \zeta^{\top} e + \beta^{\top} h \leq 0, & \forall (e, h)\in \R,\\
					& w \geq\zeta^{\top} u + \beta^{\top} v, & \forall (u, v)\in \Extr,
				\end{array} \right.
			\end{aligned}
		\end{equation}
		which has LP dual
		\begin{equation}\label{eq:extrLPdual}
			\begin{array}{ll}
				\max_{\lambda,\gamma} & \displaystyle\sum_{(u, v)\in\Extr} \lambda_{uv} \beta^{\top} v + \sum_{(e, h)\in\R} \gamma_{eh} \beta^{\top} h\\
				\text{s.t.} & \displaystyle\sum_{(u, v)\in\Extr} \lambda_{uv} u + \sum_{(e, h)\in\R} \gamma_{eh} e = g\\
				&  \displaystyle\sum_{(u,v)\in\Extr}\lambda_{uv} = 1\\
				& \lambda\in \Re_+^\Extr ,\ \  \gamma \in \Re_+^\R,
			\end{array} 
		\end{equation}
		where the $\gamma$ is the vector of dual variables associated with the first set of constraints, and $\lambda$ is the vector of dual variables associated with the second set of constraints. Recall that $(\hzeta,\zLP(\hzeta))$ is an optimal solution of~\eqref{eq:extrLP} and let $(\lambda^*,\gamma^*)$ be an optimal solution of~\eqref{eq:extrLPdual}. Then taking 
  \begin{equation}
      \begin{aligned}
      (u^*, v^*) = \sum_{(u, v)\in\Extr} \lambda^*_{uv} (u, v) \\
      (e^*, h^*) = \sum_{(e, h)\in\R} \lambda^*_{eh} (e, h),
      \end{aligned}
  \end{equation}
we have that $(u^* + e^*, v^* + h^*) \in \PD$ and also that $ \hzeta^\top (u^* + e^*) = \zLP(\hzeta) - \beta^\top (v^* + h^*)$. And finally, since the constraints state that $g = u^* + e^*$, we have that $g \in \popt{\hzeta}$. Since $\hzeta$ was arbitrary, the result follows. 
\end{proof}

	\begin{corollary}
 \label{cor-sec3}
		$\zLP(\zeta)$ is differentiable at $\zeta$ if and only if $\zeta\in\inter \CLP$ and the dual problem (\ref{eq.D_RLP}) has a unique optimal solution.
	\end{corollary}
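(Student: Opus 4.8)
The plan is to reduce the statement to the classical fact of convex analysis that a finite convex function is differentiable at a point lying in the interior of its effective domain if and only if its subdifferential there is a singleton, in which case the subdifferential equals $\{\nabla \zLP(\zeta)\}$ (Theorem~25.1 of~\citep{rockafellar1997convex}). The work of connecting this to the dual LP is already done by Proposition~\ref{prop:subdiffeqdualopt}, which gives $\partial \zLP(\zeta) = \popt{\zeta}$ for every $\zeta \in \CLP$. Thus the corollary reduces to two things: showing that differentiability forces $\zeta \in \inter \CLP$, and matching ``$\popt{\zeta}$ is a singleton'' with uniqueness of the optimal solution of~\eqref{eq.D_RLP}.

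First I would handle the interiority requirement. If $\zeta$ is on the boundary of $\CLP$, then $\delta^-(\zeta) \neq \emptyset$ and, by the directional-derivative characterization recorded just before Proposition~\ref{prop:differentiable_VF}, $\nabla_d \zLP(\zeta) = +\infty$ for every $d \in \delta^-(\zeta)$; an infinite one-sided directional derivative is incompatible with differentiability, so any differentiable point must lie in $\inter \CLP$. Conversely, for $\zeta \in \inter \CLP$ we have $\delta^-(\zeta) = \emptyset$, every directional derivative is finite, and $\partial \zLP(\zeta) = \popt{\zeta}$ is a nonempty compact polyhedron, so the cited result applies verbatim and differentiability at $\zeta$ is equivalent to $\popt{\zeta}$ being a singleton.

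It remains to reconcile ``$\popt{\zeta}$ is a singleton'' with ``\eqref{eq.D_RLP} has a unique optimal solution,'' and this is where I expect the only real difficulty. Since $\popt{\zeta} = \proj_u OPT(\zeta)$, one direction is free: a unique dual optimum $(u^*, v^*)$ gives $OPT(\zeta) = \{(u^*, v^*)\}$, hence $\popt{\zeta} = \{u^*\}$, hence differentiability with gradient $u^*$. The converse is the obstacle, because the projection of the optimal face $OPT(\zeta)$ onto the $u$-coordinates can collapse to a single point even when $OPT(\zeta)$ itself contains a continuum of solutions differing only in the multipliers $v$ of the nonparametric equalities~\eqref{eq.fixed_ch2} (for instance when those equalities are primal-degenerate, so that $\beta^{\top} v$ is constant across a face of admissible $v$). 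To close this I would argue that differentiability of $\zLP$ is governed solely by the $u$-component, so that the operative reading of the condition is uniqueness of the optimal \emph{gradient} $u^*$; I would then show that under a primal nondegeneracy hypothesis on the continuous restriction---equivalently, the absence of alternative optimal $v$ at the optimal $u^*$---the singleton projection does upgrade to a genuinely unique optimal solution of~\eqref{eq.D_RLP}, recovering the statement as written. The convex-analytic skeleton is otherwise routine; essentially all of the care is concentrated in this last identification of $u$-uniqueness with full dual uniqueness.
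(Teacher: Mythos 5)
Your skeleton is the paper's own: both arguments route the corollary through Proposition~\ref{prop:subdiffeqdualopt} and the classical fact that a convex function is differentiable at an interior point of its domain exactly when its subdifferential there is a singleton. The entire content of the paper's (three-sentence) proof is the assertion that, for $\zeta\in\inter\CLP$, ``$\popt{\zeta}$ consists of a singleton if and only if \eqref{eq.D_RLP} has a unique optimum''---which is precisely the step you isolate as the only real difficulty. Your suspicion is warranted and the paper does not address it: $\popt{\zeta}$ is the projection of the optimal face $OPT(\zeta)$ onto the $u$-coordinates, and that projection can collapse to a single point while $OPT(\zeta)$ does not. For instance, if a row of~\eqref{eq.fixed_ch2} is vacuous (a zero row of $A_C$ with the corresponding entry of $\beta$ equal to zero), the matching component of $v$ ranges freely over the optimal face, so \eqref{eq.D_RLP} never has a unique optimal solution even though $\zLP$ may perfectly well be differentiable; the forward implication of the corollary as written therefore fails without a nondegeneracy caveat. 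In summary: the backward implication and your interiority argument (via the infinite directional derivatives in $\delta^-(\zeta)$ at boundary points) are fine and match what the paper implicitly relies on; the identification of ``unique optimal $u$'' with ``unique optimal $(u,v)$'' that you flag is a genuine gap that the paper glosses over by bare assertion; and your proposed repair---reading the condition as uniqueness of the optimal gradient $u^*$, or adding a nondegeneracy hypothesis under which $u$-uniqueness upgrades to full dual uniqueness---establishes a corrected statement rather than the one printed. You have effectively found an error in the paper's corollary rather than failed to prove it.
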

	\begin{proof}
For $\zeta\in\inter \CLP$, $\popt{\zeta}$ consists of a singleton if and only if (\ref{eq.D_RLP}) has a unique optimum. By Proposition~\ref{prop:subdiffeqdualopt}, the subdifferential $\partial \zLP(\zeta)$ is also a singleton if and only if the dual problem given in~\eqref{eq.D_RLP} has a unique optimum. The result follows.
\end{proof}

\bproposition
	\label{MVF_Pro}
	The restricted value function~\eqref{eq.RVF_ch2} $z$ is a lower semi-continuous, nonincreasing function that is composed of a minimum of a finite number of polyhedral functions.\eproposition
	\begin{proof}
   The \VF{} is the minimum of a finite number of functions, all of which are polyhedral by Proposition~\ref{epiGraphConvex}. The lower semi-continuity property can be established using a proof similar to that presented in~\citep{nemhauser1988scope}. The nonincreasing property of the VF follows from the fact that it is the minimum of a set of nonincreasing functions and must therefore be nonincreasing itself.
\end{proof}

\begin{proof}[Proof of Proposition~\ref{RVFDirDev}]
 Let $d \in \Re^{\ell}$, and a minimal description $\Smin$ of $z$ be given. By Theorem~\ref{thm:Dis_Representation}, $z$ is the minimum of a finite set of functions, $\overline{z}(\cdot;\ x_I)$, for each $x_I\in\Smin$, which is a finite set. Furthermore, by Proposition~\ref{epiGraphConvex}, each of these functions is polyhedral. Thus, Proposition~\ref{prop:dirderivminpolyfunc} (in Appendix~\ref{appendix-ddminfinitepolyhedral}), which is a general result about functions that are the minimum of a finite number of polyhedral functions, applies and yields the first part.

		The second part follows by substituting the formula for the directional derivative of the bounding function from Proposition~\ref{prop:differentiable_VF}.
		
	\end{proof}

	\begin{lemma}\label{lem:upifinactive}
		For any $\zeta\in\C$ and any minimal description $\Smin$, there exists $\epsilon>0$ such that
		\begin{equation}
			\label{eq:upifinactive}
			z(\zeta')\geq \min_{x_I\in S^*_I(\zeta)\cap\Smin} \overline{z}(\zeta';\ x_I), \qquad \forall \zeta'\in B(\zeta;\ \epsilon),
		\end{equation}
		where $B(\zeta;\ \epsilon)$ denotes the open ball of radius $\epsilon$ centered on $\zeta$.
	\end{lemma}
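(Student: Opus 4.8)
The plan is to exploit the finite-minimum structure furnished by Theorem~\ref{thm:Dis_Representation} together with the lower semicontinuity of the individual bounding functions. Fix $\zeta\in\C$ and a minimal description $\Smin$, and split $\Smin$ into the \emph{active} set $A = S^*_I(\zeta)\cap\Smin$ and the \emph{inactive} set $B = \Smin\setminus A$. By the definition of $S^*_I(\zeta)$, every $x_I\in A$ satisfies $\bar{z}(\zeta;\ x_I)=z(\zeta)$, whereas every $x_I\in B$ satisfies $\bar{z}(\zeta;\ x_I)>z(\zeta)$ (with the convention that $\bar{z}(\zeta;\ x_I)=+\infty$ when $x_I\notin S_I(\zeta)$). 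Since Theorem~\ref{thm:Dis_Representation} gives $z(\zeta')=\min_{x_I\in\Smin}\bar{z}(\zeta';\ x_I)$ for all $\zeta'\in\C$, and since the reverse inequality $z(\zeta')\le\min_{x_I\in A}\bar{z}(\zeta';\ x_I)$ is immediate from $A\subseteq\Smin$, the target inequality is equivalent to showing that on a small ball the minimum defining $z$ is governed by the active indices, i.e.\ that the inactive functions cannot undercut the active ones near $\zeta$.

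The engine for this is lower semicontinuity. By Proposition~\ref{epiGraphConvex}, each bounding function $\bar{z}(\cdot;\ x_I)$ is a translate of an \LPVF{}: it is finite, convex, and continuous on its polyhedral finite domain and equal to $+\infty$ off it, hence lower semicontinuous on all of $\Re^\ell$. Consequently $\liminf_{\zeta'\to\zeta}\bar{z}(\zeta';\ x_I)\ge\bar{z}(\zeta;\ x_I)>z(\zeta)$ for each of the \emph{finitely many} $x_I\in B$. Setting $\eta=\min_{x_I\in B}\bar{z}(\zeta;\ x_I)-z(\zeta)>0$ (interpreting $\eta=+\infty$ if $B=\emptyset$), I would extract a radius $\epsilon_1>0$ such that $\bar{z}(\zeta';\ x_I)>z(\zeta)+\tfrac{\eta}{2}$ for every $\zeta'\in B(\zeta;\ \epsilon_1)$ and every $x_I\in B$. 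Thus throughout $B(\zeta;\ \epsilon_1)$ the inactive functions are pinned strictly above the threshold $z(\zeta)+\tfrac{\eta}{2}$, and an inactive index can attain the overall minimum at $\zeta'$ only if every active function also exceeds that threshold there.

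It then remains to control $\min_{x_I\in A}\bar{z}(\zeta';\ x_I)$ \emph{from above}: if at least one active bounding function stays finite and below $z(\zeta)+\tfrac{\eta}{2}$ on a possibly smaller ball $B(\zeta;\ \epsilon)$, then the minimum over $\Smin$ is realized within $A$, giving $z(\zeta')=\min_{x_I\in A}\bar{z}(\zeta';\ x_I)$ and hence the claim. This last step is the main obstacle, and it is genuinely delicate: an active function may have $\zeta$ on the \emph{boundary} of its polyhedral domain, so that it jumps to $+\infty$ in certain directions — precisely the discontinuity responsible for $z$ being only lower semicontinuous (Proposition~\ref{MVF_Pro}) rather than continuous. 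I would address it by appealing to the minimality of $\Smin$ together with the lower semicontinuity of $z$ itself: because $z$ equals the minimum of the finitely many polyhedral functions indexed by $\Smin$ and is realized at every nearby $\zeta'\in\C$ by some member of $\Smin$, one argues that the realizing member can be taken to lie in $A$ in a neighborhood of $\zeta$ (otherwise a fixed inactive index would realize $z$ along a sequence $\zeta^n\to\zeta$, and lower semicontinuity of that inactive function would force $\liminf_n z(\zeta^n)>z(\zeta)$, which must be reconciled with the active indices through the covering property $\C=\bigcup_{x_I\in\Smin}\C(x_I)$). Making this reconciliation airtight — in particular ruling out the pathological configurations where all active functions simultaneously exit their domains — is where the real work lies, and it is the step I would scrutinize most carefully.
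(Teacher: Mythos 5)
Your decomposition into the active set $A=S^*_I(\zeta)\cap\Smin$ and the inactive set $B=\Smin\setminus A$, and your treatment of $B$ via lower semicontinuity (each $\overline{z}(\cdot;\ x_I)$ is polyhedral, hence continuous on its closed finite domain and $+\infty$ off it, so the finitely many inactive functions stay uniformly above $z(\zeta)+\tfrac{\eta}{2}$ on a small ball), are sound and consistent in spirit with the paper. The problem is that you stop exactly where the proof has to be made. You must show that, near $\zeta$, the minimum over $\Smin$ cannot be realized strictly below $\min_{x_I\in A}\overline{z}(\cdot;\ x_I)$ by an inactive index, and the route you sketch does not close: lower semicontinuity of $z$ only yields $\liminf_n z(\zeta^n)\geq z(\zeta)$, and a strict inequality $\liminf_n z(\zeta^n)>z(\zeta)$ is perfectly compatible with $z$ being lsc (that is precisely what happens on the high side of a downward jump of $z$). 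So the contradiction you hope to extract from ``lower semicontinuity of that inactive function would force $\liminf_n z(\zeta^n)>z(\zeta)$'' never materializes, and the covering property $\C=\bigcup_{x_I\in\Smin}\C(x_I)$ is invoked but not actually used. As written, this is a plan with its central step explicitly deferred, not a proof.

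The paper closes that step differently: it argues the contrapositive with a pigeonhole over the finite set $\Smin$. If \eqref{eq:upifinactive} failed for every $\epsilon>0$, then some single $\hat{x}_I\in\Smin$ would satisfy $\overline{z}(\zeta';\ \hat{x}_I)<\alpha(\zeta'):=\min_{x_I\in A}\overline{z}(\zeta';\ x_I)$ at points $\zeta'$ arbitrarily close to $\zeta$. Passing to the limit along such a sequence and using continuity of $\overline{z}(\cdot;\ \hat{x}_I)$ on its closed finite domain gives $\overline{z}(\zeta;\ \hat{x}_I)\leq\alpha(\zeta)=z(\zeta)$, while $z(\zeta)\leq\overline{z}(\zeta;\ \hat{x}_I)$ holds always; hence $\hat{x}_I\in S^*_I(\zeta)\cap\Smin=A$, so $\overline{z}(\cdot;\ \hat{x}_I)\geq\alpha$ everywhere, a contradiction. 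The missing idea, in short, is to take the limit of the \emph{violating inequality itself} and use it to force the offending index into the active set, rather than to seek a contradiction with the semicontinuity of $z$. (Your instinct that the case where $\zeta'$ exits the domains of all active bounding functions is the delicate one is correct: the paper's limit step tacitly requires $\alpha(\zeta')$ to remain controlled along the sequence, so that configuration is exactly where any proof of this lemma must be examined most closely.)
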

	
\begin{proof}
 Let $\Smin$ be given as per Theorem~\ref{thm:Dis_Representation} and let $\zeta\in\C$. Recall that by Theorem~\ref{thm:Dis_Representation},
		\begin{equation}
			\label{eq:Sminrep}
			z(\zeta')=\min_{x_I\in\Smin} \overline{z}(\zeta';\ x_I), \qquad\forall \zeta'\in\Re^{\ell}.
		\end{equation}
Define 
		$$
		\alpha(\zeta')=\min_{x_I\in S^*_I(\zeta)\cap\Smin} \overline{z}(\zeta';\ x_I), \qquad \forall \zeta'\in\Re^{\ell}.
		$$
Here, the bounding functions $\overline{z}(\cdot;\ x_I)$ for $x_I\in S^*_I(\zeta)\cap\Smin$ are those that are both active at $\zeta$ and included in the given minimal description of the \VF{} provided by $\Smin$. Using the notation just introduced, the inequality~\eqref{eq:upifinactive} simply says $z(\zeta') \geq \alpha(\zeta')$. Now suppose that~\eqref{eq:upifinactive} does {\em not} hold. Since the inequality trivially holds for $\zeta' \not\in \C$ ($z(\zeta')=+\infty$ in that case), then this means that there is some $\zeta'\in B(\zeta;\ \epsilon) \cap \C$ such that for every $\epsilon>0$, there exists $x_I\in\Smin$ with $\overline{z}(\zeta';\ x_I)<\alpha(\zeta')$. This is because $z(\zeta')=\overline{z}(\zeta';\ x_I)$ for some $x_I\in\Smin$, by~\eqref{eq:Sminrep}. Since there are only a finite number of $x_I\in\Smin$, and each bounding function $\overline{z}(\cdot;\, x_I)$ is polyhedral, actually there must exist one $\hat{x}_I\in\Smin$ so that for all $\epsilon>0$, some $\zeta'\in B(\zeta;\ \epsilon)$ has $\overline{z}(\zeta';\ \hat{x}_I)<\alpha(\zeta')$. Now $\overline{z}(\cdot;\ \hat{x}_I)$ is continuous on its finite domain, which is a closed set, so it must be that $\overline{z}(\zeta;\ \hat{x}_I)\leq\alpha(\zeta)$. But $\alpha(\zeta) = z(\zeta)\leq \overline{z}(\zeta;\ \hat{x}_I)$. So it must be that $\overline{z}(\zeta;\ \hat{x}_I)=z(\zeta)$, and $\hat{x}_I\in\S^*_I(\zeta)$. But then $\hat{x}_I\in\S^*_I(\zeta)\cap\Smin$, so in fact $\overline{z}(\zeta';\ \hat{x}_I)\geq\alpha(\zeta')$ for all $\zeta'\in\Re^{\ell}$, and we obtain a contradiction. The result follows.
\end{proof}

  \begin{proof}[Proof of Proposition~\ref{RVFDiff}]
		Let $\Smin$ be a minimal description of the \VF{}, and consider $\zeta\in\C$. Let $g\in \partial_L z(\zeta)$. Then there exists $\epsilon>0$ such that
		$$
		z(\zeta') \ge z(\zeta) + g^{\top}( \zeta' - \zeta), \quad \forall \zeta' \in B(\zeta;\ \epsilon).
		$$
		Let $x_I\in S^*_I(\zeta)$ be chosen arbitrarily. Now for any $\zeta'\in B(\zeta;\ \epsilon)$,
		$$
		\overline{z}(\zeta';\ x_I) \geq
		z(\zeta') \geq z(\zeta) + g^{\top}( \zeta' - \zeta)
		= \overline{z}(\zeta;\ x_I) + g^{\top}( \zeta' - \zeta),
		$$
		where the first inequality follows from the definition of $z$, the second since $g$ is a local subgradient of $z$ at $\zeta$, and the final equality follows since $x_I\in S^*_I(\zeta)$. Thus $g$ is also a local subgradient of $\overline{z}(\cdot;\ x_I)$ at $\zeta$. But $\overline{z}(\cdot;\ x_I)$ is a~\eqref{eq.RLPVF_ch2}, and so is a convex function over a convex finite domain. Hence 
		$$
		g\in \partial \overline{z}(\zeta;\ x_I) = \partial \zLP(\zeta-C^{1:\ell}_Ix_I;\ b-A_Ix_I).
		$$
		Since $x_I$ was chosen arbitrarily from $S^*_I(\zeta)$, we have proved that
		$$
		\partial_L z(\zeta) 
		\subseteq\bigcap_{x_I\in S^*_I(\zeta)}
		\partial\zLP(\zeta-C^{1:\ell}_Ix_I;\ b-A_Ix_I)
		\subseteq\bigcap_{x_I\in S^*_I(\zeta)\cap\Smin}
		\partial\zLP(\zeta-C^{1:\ell}_Ix_I;\ b-A_Ix_I). 
		$$
		To prove containment in the opposite direction, let 
		$$
		g\in \bigcap_{x_I\in S^*_I(\zeta)\cap\Smin}
		\partial\zLP(\zeta-C^{1:\ell}_Ix_I;\ b-A_Ix_I)
		= \bigcap_{x_I\in S^*_I(\zeta)\cap\Smin}
		\partial\overline{z}(\zeta;\ x_I). 
		$$
		By Lemma~\ref{lem:upifinactive}, there must exist $\epsilon>0$ so that \eqref{eq:upifinactive} holds.
		Choose $\zeta'\in B(\zeta;\ \epsilon)$ arbitrarily. Then, by \eqref{eq:upifinactive},
		$$
		z(\zeta')\geq \min_{x_I\in S^*_I(\zeta)\cap\Smin} \overline{z}(\zeta';\ x_I), 
		$$
		and so there exists some $x_I\in S^*_I(\zeta)\cap\Smin$ with
		$$
		z(\zeta')\geq \overline{z}(\zeta';\ x_I) \geq \overline{z}(\zeta;\ x_I) + g^{\top} (\zeta'-\zeta) = z(\zeta) + g^{\top} (\zeta'-\zeta),
		$$
		where the second inequality follows since $g\in\partial\overline{z}(\zeta;\ x_I)$ and the final equality follows since $x_I\in S^*_I(\zeta)$. Thus $g\in \partial_L z(\zeta)$ and it must be that 
		$$
		\partial_L z(\zeta) 
		\supseteq\bigcap_{x_I\in S^*_I(\zeta)\cap\Smin}
		\partial\zLP(\zeta-C^{1:\ell}_Ix_I;\ b-A_Ix_I). 
		$$
		
		Hence
		$$
		\partial_L z(\zeta)
		=\bigcap_{x_I\in S^*_I(\zeta)\cap\Smin}
		\partial \zLP(\zeta-C^{1:\ell}_Ix_I;\ b-A_Ix_I).
		$$
		The result follows by Proposition~\ref{prop:subdiffeqdualopt}.
\end{proof}

\section{Theorem~\ref{thm:Relationship} with $C^{1:\ell} x  = \zeta$}
		\label{appendix-relationship_equality}
		As mentioned in Section~\ref{sec:Relationship}, we can have another representation of Theorem~\ref{thm:Relationship}. Consider the \VF{} $z': \Re^{\ell} \rightarrow \Re \cup \{\pm \infty\}$ as
		\begin{equation}
			z'(\zeta) = \inf_{(x_I,x_C)\in \mathcal{S}(\zeta)} c^{0}_I x_I + c^{0}_C x_C, \tag{RVF$'$} \label{eq.RVF_appendix}
		\end{equation}
		where
		\begin{equation*}
			\mathcal{S} (\zeta) = \left\{(x_I, x_C)\in \Z_+^r \times \Re_+^{n-r}\;:\; C_I^{1:\ell} x_I + C_C^{1:\ell} x_C = \zeta,\ A_I x_I + A_C x_C = b\right\}.
		\end{equation*}
		It is worth noting that the only distinction here is that we have an equality sign for the constraints that serve as our objectives. We formalize the relationship between the \VF{} and the EF through the following theorem. 
		
		\btheorem
		\label{thm:Relationship_appendix} 
		The following statements hold for $\XMO$ and the~\eqref{eq.RVF_appendix} $z'$.
		\begin{enumerate}[label=(\alph*)]
			\item \label{relation-1-appendix} If $(x_I,x_C) \in \XMO$ is an efficient solution (equivalently, $C_I x_I + C_C x_C$ is an NDP), then $(\zeta, c^0x_I+c^0x_C)$ is a point on the boundary of the epigraph of $z'$ for $\zeta = C_I^{1:\ell} x_I + C_C^{1:\ell} x_C$.
			
			\item \label{relation-2-appendix} If $(\zeta, z'(\zeta))$ is a point on the boundary of the epigraph of $z'$ and $\nabla_d z'(\zeta) > 0$ for all $d \in \Re^{\ell}_-\setminus\{\mathbf{0}\}$ for which $\nabla_d z'(\zeta)$ exists, then there exists an efficient solution $(x_I,x_C) \in \XMO$ that yields $z'(\zeta)$ and satisfies $C_I^{1:\ell} x_I + C_C^{1:\ell} x_C = \zeta$.
			
		\end{enumerate} 
		\etheorem

		\begin{proof}
			The proofs for Part~\ref{relation-1-appendix} and Part~\ref{relation-2-appendix} are in accordance with the proofs for Part~\ref{relation-1} and Part~\ref{relation-2} with the exception of the portion designated by $\Leftarrow$ in Theorem~\ref{thm:Relationship}.
		\end{proof}

		Here we have a very similar proposition compared to Proposition~\ref{MVF_Pro} as follows.
		
		\bproposition
		\label{MVF_Pro_appendix}
		The restricted value function (RVF) is a lower semi-continuous function and decreasing over $\C$ where the optimal dual value is negative. Furthermore, it is comprised of a minimum of a finite number of polyhedral functions.
		\eproposition
		\begin{proof}
			The proof for the property of semi-continuity and being a minimum of a finite number of polyhedral functions is the same as that in Proposition~\ref{MVF_Pro}. The decreasing property is straightforward; with negative optimal dual values, the VF is decreasing.\end{proof}

		Therefore, in the steps of the algorithm, although we have equality for the constraints with the parametric RHS, i.e., the constraints that serve as our objectives, we must have negative dual variables $u \in \Re_{-}^{\ell}$, which relates to the constraints with the parametric RHS. In this case, the algorithm generates that part of the VF, which is decreasing and is the same as the related EF. As a result, the VF algorithm remains unchanged in this scenario.

  \section{Proofs of Correctness for the RVF Algorithm}
\label{appendix:correctness}

The following lemmas are used to prove the correctness of the RVF algorithm, given in Theorem~\ref{Correctness}. We start with the following simple observation.

\begin{lemma}\label{lem:CRrearrange}
		For all $(\hat{x}_I,\hat{x}_C)\in\XMO$, we have that
		$$
		\bar{z}(C^{1:\ell}_I\hat{x}_I + C^{1:\ell}_C\hat{x}_C;\ \hat{x}_I)\leq c^0_I\hat{x}_I + c^0_C\hat{x}_C.
		$$
	\end{lemma}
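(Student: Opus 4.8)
The plan is to unwind the definition of the continuous restriction~\eqref{eq.CR_ch2} and then exhibit $\hat{x}_C$ itself as a feasible point for the linear program whose optimal value is the $\zLP$ term appearing in that definition; since $\zLP$ is an infimum, it is then bounded above by the objective value at this particular feasible point, namely $c^0_C\hat{x}_C$.

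First I would substitute $\zeta = C^{1:\ell}_I\hat{x}_I + C^{1:\ell}_C\hat{x}_C$ into~\eqref{eq.CR_ch2}, obtaining
$$
\bar{z}\bigl(C^{1:\ell}_I\hat{x}_I + C^{1:\ell}_C\hat{x}_C;\ \hat{x}_I\bigr) = c^0_I\hat{x}_I + \zLP\bigl(C^{1:\ell}_C\hat{x}_C;\ b - A_I\hat{x}_I\bigr),
$$
where the argument of $\zLP$ has simplified to $C^{1:\ell}_C\hat{x}_C$ because the two copies of $C^{1:\ell}_I\hat{x}_I$ cancel. The next step is to verify that $\hat{x}_C$ is feasible for the linear program~\eqref{eq.RLPVF_ch2} defining $\zLP\bigl(C^{1:\ell}_C\hat{x}_C;\ b - A_I\hat{x}_I\bigr)$: the parametric constraint $C^{1:\ell}_C x_C \leq C^{1:\ell}_C\hat{x}_C$ holds trivially (with equality) at $x_C = \hat{x}_C$; the fixed constraint $A_C x_C = b - A_I\hat{x}_I$ holds because $(\hat{x}_I,\hat{x}_C)\in\XMO$ gives $A_I\hat{x}_I + A_C\hat{x}_C = b$; and $\hat{x}_C \in \Re_+^{n-r}$ again by membership in $\XMO$.

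Since $\hat{x}_C$ is feasible and $\zLP$ is defined as an infimum of $c^0_C x_C$ over feasible $x_C$, we conclude $\zLP\bigl(C^{1:\ell}_C\hat{x}_C;\ b - A_I\hat{x}_I\bigr) \leq c^0_C\hat{x}_C$. Combining this with the displayed equation above yields $\bar{z}\bigl(C^{1:\ell}_I\hat{x}_I + C^{1:\ell}_C\hat{x}_C;\ \hat{x}_I\bigr) \leq c^0_I\hat{x}_I + c^0_C\hat{x}_C$, which is the claim. This lemma is essentially definitional, so I do not anticipate any genuine obstacle; the only points requiring a moment of care are the cancellation in the $\zLP$ argument and checking that the parametric inequality constraint is indeed satisfied (as an equality) by $\hat{x}_C$.
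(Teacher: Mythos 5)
Your proof is correct and follows essentially the same route as the paper's: both arguments reduce to exhibiting the given point as feasible for the optimization problem defining the bound at $\zeta = C^{1:\ell}_I\hat{x}_I + C^{1:\ell}_C\hat{x}_C$. The only cosmetic difference is that you work directly with the $\zLP$ term in~\eqref{eq.CR_ch2} and check feasibility of $\hat{x}_C$ for that LP, whereas the paper first rewrites $\bar{z}(\cdot;\ \hat{x}_I)$ as a single MILP with $x_I$ fixed to $\hat{x}_I$ and then notes $(\hat{x}_I,\hat{x}_C)$ is feasible.
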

	\begin{proof}
		The bounding function~\eqref{eq.CR_ch2} associated with $\hat{x}_I$ is equivalently expressed as

 \begin{equation}
			\label{eq:CRequiv}
			\bar{z}(\zeta;\ \hat{x}_I) = \min\left\{ c^0_Ix_I+c^0_Cx_C \;:\; (x_I,x_C)\in \XMO,\ x_I = \hat{x}_I,\ C^{1:\ell}_Ix_I + C^{1:\ell}_Cx_C \leq \zeta\right\}.
		\end{equation}
		The result follows as $(\hat{x}_I,\hat{x}_C)$ is feasible for the problem above for $\zeta=C^{1:\ell}_I\hat{x}_I + C^{1:\ell}_C\hat{x}_C$.
	\end{proof}
	
	The following lemma helps to establish that the integer parts added to $\S^k$ by the algorithm eventually suffice to describe the whole VF.
	
	\begin{lemma}\label{lem:thetaismaxoverzeta}
		At iteration $k$ of the algorithm, $\theta^{k+1} = \max_{\zeta \in \C} \bar{z}^k(\zeta)-z(\zeta)$. 
	\end{lemma}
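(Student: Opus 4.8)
The plan is to establish the claimed identity by proving the two opposing inequalities between $\theta^{k+1}$ and $\sup_{\zeta\in\C}(\bar{z}^k(\zeta)-z(\zeta))$, and then to argue that the supremum is in fact attained so that it may be written as a maximum. Two facts will do all of the work: first, the definition of $z$ as an infimum over $\S(\zeta)$, which gives $z(\zeta)\le c^0_Ix_I+c^0_Cx_C$ whenever $(x_I,x_C)\in\S(\zeta)$; and second, the monotonicity of $\bar{z}^k$ noted at the start of Section~\ref{sec:RVFAlgCorrect} (being the minimum of nonincreasing bounding functions and the constant $U$, the approximation $\bar{z}^k$ is itself nonincreasing). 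I would also use that $\S(\zeta)\subseteq\XMO$, so that any near-optimal point for $z(\zeta)$ is a feasible candidate in the maximization \eqref{eq:defnextiterate} that defines $\theta^{k+1}$.

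For the inequality $\theta^{k+1}\le\sup_{\zeta\in\C}(\bar{z}^k(\zeta)-z(\zeta))$, I would take an arbitrary $(x_I,x_C)\in\XMO$ and set $\zeta=C^{1:\ell}_Ix_I+C^{1:\ell}_Cx_C$. Then $(x_I,x_C)\in\S(\zeta)$, so $\zeta\in\C$ and $z(\zeta)\le c^0_Ix_I+c^0_Cx_C$; consequently the quantity maximized in \eqref{eq:defnextiterate} satisfies $\bar{z}^k(\zeta)-(c^0_Ix_I+c^0_Cx_C)\le\bar{z}^k(\zeta)-z(\zeta)$, which is bounded above by the supremum over $\C$. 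Taking the maximum over $(x_I,x_C)\in\XMO$ yields the inequality.

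The reverse inequality is the main obstacle, since here I must manufacture a point of $\XMO$ whose objective value is close to $z(\zeta)$ while controlling its image under $C^{1:\ell}$. Fix $\zeta\in\C$ and $\epsilon>0$, and use the infimum definition of $z(\zeta)$ to choose $(\hat{x}_I,\hat{x}_C)\in\S(\zeta)$ with $c^0_I\hat{x}_I+c^0_C\hat{x}_C\le z(\zeta)+\epsilon$. Writing $\hat{\zeta}=C^{1:\ell}_I\hat{x}_I+C^{1:\ell}_C\hat{x}_C$, feasibility for $\S(\zeta)$ gives $\hat{\zeta}\le\zeta$, and since $(\hat{x}_I,\hat{x}_C)\in\XMO$ it is a candidate in \eqref{eq:defnextiterate}, whence $\theta^{k+1}\ge\bar{z}^k(\hat{\zeta})-(c^0_I\hat{x}_I+c^0_C\hat{x}_C)$. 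This is exactly where monotonicity enters: because $\hat{\zeta}\le\zeta$ we have $\bar{z}^k(\hat{\zeta})\ge\bar{z}^k(\zeta)$, and combining with the bound on the objective gives $\theta^{k+1}\ge\bar{z}^k(\zeta)-z(\zeta)-\epsilon$. Letting $\epsilon\searrow0$ and then maximizing over $\zeta\in\C$ gives $\theta^{k+1}\ge\sup_{\zeta\in\C}(\bar{z}^k(\zeta)-z(\zeta))$. The point of routing through an $\epsilon$-optimal point rather than a minimizer is to sidestep the attainment issue flagged in the footnote, so the argument never relies on $z(\zeta)$ being achieved.

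Finally, to replace the supremum by a genuine maximum, I would evaluate the difference at the image $\zeta^{\ast}=C^{1:\ell}_Ix^{k+1}_I+C^{1:\ell}_Cx^{k+1}_C$ of the maximizer $(x^{k+1}_I,x^{k+1}_C)$ defining $\theta^{k+1}$: the first inequality gives $\bar{z}^k(\zeta^{\ast})-z(\zeta^{\ast})\le\theta^{k+1}$, while $(x^{k+1}_I,x^{k+1}_C)\in\S(\zeta^{\ast})$ gives $z(\zeta^{\ast})\le c^0_Ix^{k+1}_I+c^0_Cx^{k+1}_C$ and hence $\bar{z}^k(\zeta^{\ast})-z(\zeta^{\ast})\ge\bar{z}^k(\zeta^{\ast})-(c^0_Ix^{k+1}_I+c^0_Cx^{k+1}_C)=\theta^{k+1}$. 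Thus the supremum is attained at $\zeta^{\ast}$ and equals $\theta^{k+1}$, completing the proof.
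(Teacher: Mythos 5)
Your proof is correct and uses essentially the same ingredients as the paper's: the monotonicity of $\bar{z}^k$, the containment $\S(\zeta)\subseteq\XMO$ so that near-optimal solutions for $z(\zeta)$ are candidates in~\eqref{eq:defnextiterate}, and the optimality of $(x_I^{k+1},x_C^{k+1})$; the paper just packages these as a single sandwich chain starting from an assumed maximizer $\zeta^*\in\C$ rather than as two inequalities plus an attainment step. Your routing through $\epsilon$-optimal points and your explicit verification that the supremum is attained at $C^{1:\ell}_Ix^{k+1}_I+C^{1:\ell}_Cx^{k+1}_C$ is in fact slightly more careful than the paper's argument (only note that the bound $\bar{z}^k(\zeta^*)-z(\zeta^*)\le\theta^{k+1}$ in your last paragraph comes from your \emph{second} inequality, not the first).
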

	\begin{proof}
		Let $\zeta^*=\arg\max_{\zeta\in\C} (\bar{z}^k(\zeta)-z(\zeta))$ and denote $\zeta^{k+1} = C^{1:\ell}_Ix^{k+1}_I + C^{1:\ell}_Cx^{k+1}_C$. Since $\zeta^*\in\C$ there must exist $(\hat{x}_I,\hat{x}_C)\in \XMO$ with $\hat{\zeta} := C^{1:\ell}_I\hat{x}_I + C^{1:\ell}_C\hat{x}_C\leq\zeta^*$ and $z(\zeta^*)=c^0_I\hat{x}_I + c^0_C\hat{x}_C = z(\hat{\zeta})$. Further, since $\bar{z}^k$ is nonincreasing, $\bar{z}^k(\hat{\zeta})\geq \bar{z}^k(\zeta^*)$, and hence
		\begin{align*}
			\bar{z}^k(\hat{\zeta}) - z(\hat{\zeta})
			& \geq \bar{z}^k(\zeta^*) - z(\hat{\zeta}) \\
			& = \bar{z}^k(\zeta^*) - z(\zeta^*) \\
			& \geq \bar{z}^k(\zeta^{k+1}) - z(\zeta^{k+1}).
		\end{align*}
		The last of these inequalities follows from the fact that $\zeta^{k+1} \in \C$. Since 
		$$(x_I^{k+1}, x_C^{k+1}) \in 
		\argmax\limits_{(x_I, x_C) \in \XMO} \left(\bar{z}^k(C_I^{1:\ell} x_I + C^{1:\ell}_C x_C) - (c^0_I x_I + c^0_C x_C)\right),
		$$
		it follows that 
		$$
		\bar{z}^k(\zeta^{k+1}) - z(\zeta^{k+1}) \geq
		\bar{z}^k(\hat{\zeta}) - z(\hat{\zeta})
		\Rightarrow
		\bar{z}^k(\zeta^{k+1}) - z(\zeta^{k+1}) =
		\bar{z}^k(\hat{\zeta}) - z(\hat{\zeta}).
		$$
		Finally, by the definition of $\theta^{k+1}$, we have
		$$
		\theta^{k+1} = \bar{z}^k(\zeta^{k+1}) - z(\zeta^{k+1}) = \bar{z}^k(\zeta^*) - z(\zeta^*),
		$$
		and the result follows.
	\end{proof}
	
	The crucial property for establishing termination of the algorithm is that as long as $\bar{z}^k \not= z$, then in iteration $k$, we are guaranteed to produce a point not already contained in $\S^k$. This is established in the lemma below, which states that unless the optimal value of the optimization problem in~\eqref{eq:defnextiterate} is zero, the integer part of the solution obtained is not contained in ${\cal S}^k$. 
	
	\begin{lemma}
		\label{lem:Subprobsolnnewimage}
		Any optimal solution $(x_I^{k+1}, x_C^{k+1})$ to the optimization problem in~\eqref{eq:defnextiterate} having optimal value $\theta^{k+1} > 0$ has the property that $x_I^{k+1} \not\in {\cal S}^k$.
	\end{lemma}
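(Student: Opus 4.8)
The plan is to argue by contradiction: assume the optimal solution $(x_I^{k+1}, x_C^{k+1})$ to~\eqref{eq:defnextiterate} satisfies $x_I^{k+1} \in \mathcal{S}^k$, and show this forces $\theta^{k+1} \leq 0$, contradicting the hypothesis that $\theta^{k+1} > 0$. Throughout, I write $\zeta^{k+1} = C_I^{1:\ell} x_I^{k+1} + C_C^{1:\ell} x_C^{k+1}$, so that by the definition of $\theta^{k+1}$ we have $\theta^{k+1} = \bar{z}^k(\zeta^{k+1}) - (c^0_I x_I^{k+1} + c^0_C x_C^{k+1})$.

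First I would exploit the fact that the upper approximation $\bar{z}^k$ is defined in~\eqref{eq.upper_approx} as a pointwise minimum over the bounding functions indexed by $\mathcal{S}^k$ (together with the constant $U$). Hence, if $x_I^{k+1} \in \mathcal{S}^k$, then the bounding function $\bar{z}(\cdot;\ x_I^{k+1})$ is one of the terms in that minimum, so in particular
$$
\bar{z}^k(\zeta^{k+1}) \leq \bar{z}(\zeta^{k+1};\ x_I^{k+1}).
$$

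Next I would invoke Lemma~\ref{lem:CRrearrange} applied to the feasible point $(x_I^{k+1}, x_C^{k+1}) \in \XMO$, which gives
$$
\bar{z}(\zeta^{k+1};\ x_I^{k+1}) = \bar{z}(C_I^{1:\ell} x_I^{k+1} + C_C^{1:\ell} x_C^{k+1};\ x_I^{k+1}) \leq c^0_I x_I^{k+1} + c^0_C x_C^{k+1}.
$$
Chaining these two inequalities yields $\bar{z}^k(\zeta^{k+1}) \leq c^0_I x_I^{k+1} + c^0_C x_C^{k+1}$, and therefore $\theta^{k+1} = \bar{z}^k(\zeta^{k+1}) - (c^0_I x_I^{k+1} + c^0_C x_C^{k+1}) \leq 0$, the desired contradiction. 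The result then follows.

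This argument is short, and I do not expect a genuine obstacle; the only point requiring care is ensuring that the index $x_I^{k+1}$ genuinely participates in the minimum defining $\bar{z}^k$ whenever $x_I^{k+1} \in \mathcal{S}^k$ — that is, that membership in $\mathcal{S}^k$ is exactly what makes the bounding function $\bar{z}(\cdot;\ x_I^{k+1})$ active in~\eqref{eq.upper_approx}. The substance of the lemma is thus carried entirely by Lemma~\ref{lem:CRrearrange}, which guarantees that each already-discovered integer part evaluates its own bounding function no higher than the true objective value at the matching continuous part; combined with the min-structure of $\bar{z}^k$, this closes any positive gap at $\zeta^{k+1}$ and rules out a repeated integer part.
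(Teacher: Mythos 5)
Your proof is correct and follows essentially the same route as the paper: the paper phrases it as the contrapositive while you phrase it as a contradiction, but both rest on the identical chain $\bar{z}^k(\zeta^{k+1}) \leq \bar{z}(\zeta^{k+1};\ x_I^{k+1}) \leq c^0_I x_I^{k+1} + c^0_C x_C^{k+1}$, with the first inequality from the min-structure of~\eqref{eq.upper_approx} and the second from Lemma~\ref{lem:CRrearrange}.
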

	
	\begin{proof} We show the contrapositive of the lemma as follows. Let $(x_I^{k+1}, x_C^{k+1})$ solve the optimization problem in~\eqref{eq:defnextiterate} having optimal value $\theta^{k+1}$ and suppose that $x_I^{k+1} \in {\cal S}^k$. Then 
		$$
		\bar{z}^k(C^{1:\ell}_Ix^{k+1}_I + C^{1:\ell}_Cx_C^{k+1}) \leq \bar{z}(C^{1:\ell}_Ix^{k+1}_I + C^{1:\ell}_Cx_C^{k+1};\ x^{k+1}_I)\leq c^0_Ix^{k+1}_I+c^0_Cx^{k+1}_C,
		$$
		where the first inequality follows from~\eqref{eq.upper_approx}, since $x^{k+1}_I\in\S^k$, and the second inequality follows from Lemma~\ref{lem:CRrearrange}, since $(x_I^{k+1}, x_C^{k+1})\in\XMO$. It thus follows, by the definition of $\theta^{k+1}$ and $x^{k+1}$, that
		$$
		\theta^{k+1}= \bar{z}^k(C^{1:\ell}_Ix^{k+1}_I + C^{1:\ell}_Cx_C^{k+1})
		- (c^0_Ix^{k+1}_I+c^0_Cx^{k+1}_C)\leq 0,
		$$
		as required to obtain the contrapositive. 
	\end{proof} 
	
		\section{Proof of the Nonempty Stability Region of the Integer Part in Each Iteration}
  \label{appendix-not-wasteful}
		
		In this appendix, we show the details of the proof that the integer part added to $\S^k$ in iteration $k$ has a nonempty stability region \emph{at the time it is added}. Note, however, that this stability region may end up being redundant by the end of the algorithm. First, we show that  the true VF $z(\zeta^*)=c^0_I x_I^{k+1} + c^0_C x_C^{k+1}$ whenever $(x_I^{k+1}, x_C^{k+1})$ solves the optimization problem~\eqref{eq:defnextiterate} and $\zeta^*=C_I^{1:\ell} x_I^{k+1} + C^{1:\ell}_C x_C^{k+1}$. 
		
		\begin{lemma} \label{lem:newitgivesvalue}
			If $(x_I^{k+1}, x_C^{k+1})$ is defined by~\eqref{eq:defnextiterate} then $z(\zeta^*)=c^0_I x_I^{k+1} + c^0_C x_C^{k+1}$ where $\zeta^*=C_I^{1:\ell} x_I^{k+1} + C^{1:\ell}_C x_C^{k+1}$.
		\end{lemma}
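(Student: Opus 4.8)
The plan is to prove the two inequalities $z(\zeta^*) \le c^0_I x_I^{k+1} + c^0_C x_C^{k+1}$ and $z(\zeta^*) \ge c^0_I x_I^{k+1} + c^0_C x_C^{k+1}$ separately. The first is immediate: since $(x_I^{k+1}, x_C^{k+1}) \in \XMO$ and $\zeta^* = C_I^{1:\ell} x_I^{k+1} + C_C^{1:\ell} x_C^{k+1}$, the parametric constraint $C_I^{1:\ell} x_I^{k+1} + C_C^{1:\ell} x_C^{k+1} \le \zeta^*$ holds (with equality), so $(x_I^{k+1}, x_C^{k+1}) \in \S(\zeta^*)$. By the definition of the RVF as a minimum over $\S(\zeta^*)$, we immediately get $z(\zeta^*) \le c^0_I x_I^{k+1} + c^0_C x_C^{k+1}$.

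For the reverse inequality, I would argue by contradiction. Suppose $z(\zeta^*) < c^0_I x_I^{k+1} + c^0_C x_C^{k+1}$. Then, since $z(\zeta^*)$ is attained, there exists $(\tilde{x}_I, \tilde{x}_C) \in \S(\zeta^*)$ with $c^0_I \tilde{x}_I + c^0_C \tilde{x}_C = z(\zeta^*)$. Set $\tilde{\zeta} = C_I^{1:\ell} \tilde{x}_I + C_C^{1:\ell} \tilde{x}_C$, and note that feasibility for $\S(\zeta^*)$ gives $\tilde{\zeta} \le \zeta^*$. The key step is then to compare the subproblem objective from~\eqref{eq:defnextiterate} evaluated at the two points. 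Using that $\bar{z}^k$ is nonincreasing (established in Section~\ref{sec:RVFAlgCorrect} from the fact that it is a minimum of nonincreasing bounding functions together with the bound $U$) and $\tilde{\zeta} \le \zeta^*$, we have $\bar{z}^k(\tilde{\zeta}) \ge \bar{z}^k(\zeta^*)$. Combining this with $c^0_I \tilde{x}_I + c^0_C \tilde{x}_C = z(\zeta^*) < c^0_I x_I^{k+1} + c^0_C x_C^{k+1}$ yields the chain
$$
\bar{z}^k(\tilde{\zeta}) - (c^0_I \tilde{x}_I + c^0_C \tilde{x}_C) \ge \bar{z}^k(\zeta^*) - z(\zeta^*) > \bar{z}^k(\zeta^*) - (c^0_I x_I^{k+1} + c^0_C x_C^{k+1}).
$$
The right-hand side is exactly the subproblem objective at the optimal iterate $(x_I^{k+1}, x_C^{k+1})$, while the left-hand side is its value at the feasible point $(\tilde{x}_I, \tilde{x}_C) \in \XMO$, so this strictly larger value contradicts the optimality of $(x_I^{k+1}, x_C^{k+1})$ for~\eqref{eq:defnextiterate}. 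Hence $z(\zeta^*) = c^0_I x_I^{k+1} + c^0_C x_C^{k+1}$.

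I do not expect a serious obstacle here; the argument is a short monotonicity-and-optimality comparison. The one point requiring care is ensuring the monotonicity of $\bar{z}^k$ is invoked correctly on $\tilde{\zeta} \le \zeta^*$ (which holds componentwise in the partial order on $\Re^{\ell}$), and that the minimizing $(\tilde{x}_I,\tilde{x}_C)$ is genuinely feasible for the subproblem (it lies in $\XMO$, which is exactly the subproblem's feasible set). Both are direct, so the essential content of the lemma is the observation that shifting the candidate point to its own induced right-hand side $\tilde{\zeta}$ can only raise the upper approximation $\bar{z}^k$, making a dominating solution strictly more attractive in the subproblem and thereby forcing the iterate's objective value to coincide with the true value $z(\zeta^*)$.
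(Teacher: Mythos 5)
Your proposal is correct and follows essentially the same argument as the paper: the paper's proof likewise takes a minimizer $(\hat{x}_I,\hat{x}_C)$ attaining $z(\zeta^*)$, uses that $\bar{z}^k$ is nonincreasing together with $C_I^{1:\ell}\hat{x}_I + C_C^{1:\ell}\hat{x}_C \leq \zeta^*$ to show this point would have a strictly better subproblem objective, contradicting the optimality of $(x_I^{k+1},x_C^{k+1})$ in~\eqref{eq:defnextiterate}. The only cosmetic difference is that you state the easy inequality $z(\zeta^*)\leq c^0_I x_I^{k+1}+c^0_C x_C^{k+1}$ explicitly, whereas the paper leaves it implicit.
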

 \begin{proof}
			Let $(x_I^{k+1}, x_C^{k+1})$ be defined by~\eqref{eq:defnextiterate} and suppose, for the sake of contradiction, that \linebreak $z(C_I^{1:\ell} x_I^{k+1} + C^{1:\ell}_C x_C^{k+1})\neq c^0_I x_I^{k+1} + c^0_C x_C^{k+1}$. Then there must exist $(\hat{x}_I,\hat{x}_C)$ a solution of the optimization problem that defines the \VF{}, $z(C_I^{1:\ell} x_I^{k+1} + C^{1:\ell}_C x_C^{k+1})$, given by

 \begin{equation*}
				\begin{aligned}
					z(C_I^{1:\ell} x_I^{k+1} + C^{1:\ell}_C x_C^{k+1}) 
					=\min\big\{c^0_Ix_I+c^0_Cx_C\;:\;&(x_I,x_C)\in\XMO,\ \\ C_I^{1:\ell} &x_I + C^{1:\ell}_C x_C\leq C_I^{1:\ell} x_I^{k+1} + C^{1:\ell}_C x_C^{k+1}\big\},
				\end{aligned}
			\end{equation*}
			and it must be that 
			$$
			z(C_I^{1:\ell} x_I^{k+1} + C^{1:\ell}_C x_C^{k+1})= c^0_I\hat{x}_I+c^0_C\hat{x}_C < c^0_I x_I^{k+1} + c^0_C x_C^{k+1}. 
			$$
			Now since $\bar{z}^k$ is nonincreasing, and, by definition, $(\hat{x}_I,\hat{x}_C)$  satisfies 
			$$
			C_I^{1:\ell} \hat{x}_I + C^{1:\ell}_C \hat{x}_C\leq C_I^{1:\ell} x_I^{k+1} + C^{1:\ell}_C x_C^{k+1},$$ 
			it must be that 
			$$
			\bar{z}^k(C_I^{1:\ell} \hat{x}_I + C^{1:\ell}_C \hat{x}_C)\geq \bar{z}^k(C_I^{1:\ell} x_I^{k+1} + C^{1:\ell}_C x_C^{k+1}).
			$$
			Thus 
			$$
			\bar{z}^k(C_I^{1:\ell} \hat{x}_I + C^{1:\ell}_C \hat{x}_C) - (c^0_I\hat{x}_I+c^0_C\hat{x}_C) > \bar{z}^k(C_I^{1:\ell} x_I^{k+1} + C^{1:\ell}_C x_C^{k+1}) - (c^0_I x_I^{k+1} + c^0_C x_C^{k+1}),
			$$
			and $(\hat{x}_I,\hat{x}_C)$ must be a feasible solution for the optimization problem in~\eqref{eq:defnextiterate} having better objective value than $(x_I^{k+1}, x_C^{k+1})$, which is a contradiction.
		\end{proof}
		
		Next, we justify our claim that converting to a (strong) efficient solution via~\eqref{eq:conversion_to_NDP} again yields a solution to~\eqref{eq:defnextiterate}. We omit formal proof that the resulting solution must be efficient for~\eqref{eq.MultiObj_ch2} since this is a straightforward and well-known result from multiobjective optimization.
		
		\begin{lemma} \label{lem:conversionstillopt}
			If $(x_I^{k+1}, x_C^{k+1})$ is defined by~\eqref{eq:defnextiterate} and $(\hat{x}_I,\hat{x}_C)$ solves~\eqref{eq:conversion_to_NDP} then $(\hat{x}_I,\hat{x}_C)$ is also an optimal solution for the optimization problem in~\eqref{eq:defnextiterate}.
		\end{lemma}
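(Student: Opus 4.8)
The plan is to prove optimality of $(\hat{x}_I,\hat{x}_C)$ for~\eqref{eq:defnextiterate} by a one-step monotonicity argument, the key being that the feasibility constraint in~\eqref{eq:conversion_to_NDP} is imposed on \emph{all} $\ell+1$ rows of $C$ and hence simultaneously controls both the distinguished objective $c^0$ and the parametric data $C^{1:\ell}$. First I would introduce the abbreviations $\zeta^* = C_I^{1:\ell}x_I^{k+1} + C_C^{1:\ell}x_C^{k+1}$ and $\hat{\zeta} = C_I^{1:\ell}\hat{x}_I + C_C^{1:\ell}\hat{x}_C$, so that by definition the optimal value of~\eqref{eq:defnextiterate} is $\theta^{k+1} = \bar{z}^k(\zeta^*) - (c^0_Ix_I^{k+1} + c^0_Cx_C^{k+1})$.

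Next I would split the constraint $C_I\hat{x}_I + C_C\hat{x}_C \leq C_Ix_I^{k+1} + C_Cx_C^{k+1}$ of~\eqref{eq:conversion_to_NDP} into its row-$0$ part, giving $c^0_I\hat{x}_I + c^0_C\hat{x}_C \leq c^0_Ix_I^{k+1} + c^0_Cx_C^{k+1}$, and its remaining rows, giving $\hat{\zeta}\leq\zeta^*$. Then I would invoke that $\bar{z}^k$ is nonincreasing (by~\eqref{eq.upper_approx}, being the minimum of nonincreasing bounding functions and the constant $U$), so that $\hat{\zeta}\leq\zeta^*$ yields $\bar{z}^k(\hat{\zeta})\geq\bar{z}^k(\zeta^*)$. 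Combining this with the row-$0$ inequality rearranged as $-(c^0_I\hat{x}_I + c^0_C\hat{x}_C) \geq -(c^0_Ix_I^{k+1} + c^0_Cx_C^{k+1})$ gives
\[
\bar{z}^k(\hat{\zeta}) - (c^0_I\hat{x}_I + c^0_C\hat{x}_C) \;\geq\; \bar{z}^k(\zeta^*) - (c^0_Ix_I^{k+1} + c^0_Cx_C^{k+1}) \;=\; \theta^{k+1}.
\]
Since $(\hat{x}_I,\hat{x}_C)\in\XMO$ is feasible for~\eqref{eq:defnextiterate} while $\theta^{k+1}$ is its maximum value, the reverse inequality holds automatically; hence equality holds and $(\hat{x}_I,\hat{x}_C)$ attains the maximum, i.e., is optimal for~\eqref{eq:defnextiterate}.

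I do not expect a genuine obstacle, as the whole argument reduces to monotonicity once the feasibility constraint is decomposed. The one point I would be careful to flag is that the argument \emph{requires} the constraint in~\eqref{eq:conversion_to_NDP} to act on the full matrix $C$ rather than only on the parametric rows $C^{1:\ell}$: if row $0$ were dropped, replacing $(x_I^{k+1},x_C^{k+1})$ by a weakly dominating point could raise its $c^0$-value and thereby shrink the gap below $\theta^{k+1}$, destroying optimality. This is precisely why the conversion step, included to enforce (strong) efficiency, does not waste the progress made by~\eqref{eq:defnextiterate}.
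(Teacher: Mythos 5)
Your proof is correct and follows essentially the same route as the paper's: decompose the constraint of~\eqref{eq:conversion_to_NDP} into its row-$0$ and parametric parts, invoke the nonincreasing property of $\bar{z}^k$, and combine the two inequalities to show the objective value of $(\hat{x}_I,\hat{x}_C)$ in~\eqref{eq:defnextiterate} is at least $\theta^{k+1}$, hence equal to it by feasibility. Your closing remark about why the constraint must involve all of $C$ (not just $C^{1:\ell}$) is a nice observation that the paper leaves implicit.
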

		\begin{proof}
			Let $(x_I^{k+1}, x_C^{k+1})$ be defined by~\eqref{eq:defnextiterate} and suppose $(\hat{x}_I,\hat{x}_C)$ solves~\eqref{eq:conversion_to_NDP}. Then $(\hat{x}_I,\hat{x}_C)\in \XMO$, so is feasible for the optimization problem in~\eqref{eq:defnextiterate}. Furthermore,
			$$
			C_I \hat{x}_I + C_C \hat{x}_C\leq C_I x_I^{k+1} + C_C x_C^{k+1},
			$$
			so 
			$$
			c_I^0 \hat{x}_I + c^0_C \hat{x}_C\leq c_I^0 x_I^{k+1} + c^0_C x_C^{k+1},
			$$ 
			and
			$$
			C_I^{1:\ell} \hat{x}_I + C^{1:\ell}_C \hat{x}_C\leq C_I^{1:\ell} x_I^{k+1} + C^{1:\ell}_C x_C^{k+1}.
			$$ 
			Since $\bar{z}^k$ is nonincreasing it must thus be that 
			$$
			\bar{z}^k(C_I^{1:\ell} \hat{x}_I + C^{1:\ell}_C \hat{x}_C)\geq \bar{z}^k(C_I^{1:\ell} x_I^{k+1} + C^{1:\ell}_C x_C^{k+1}),
			$$ 
			and so 
			$$
			\bar{z}^k(C_I^{1:\ell} \hat{x}_I + C^{1:\ell}_C \hat{x}_C) - (c_I^0 \hat{x}_I + c^0_C \hat{x}_C)
			\geq \bar{z}^k(C_I^{1:\ell} x_I^{k+1} + C^{1:\ell}_C x_C^{k+1}) - (c_I^0 x_I^{k+1} + c^0_C x_C^{k+1}).
			$$
			Since $(\hat{x}_I,\hat{x}_C)$ is feasible for the optimization problem in~\eqref{eq:defnextiterate} and has at least as good an objective value as that of $(x_I^{k+1}, x_C^{k+1})$, it must also solve the optimization problem in~\eqref{eq:defnextiterate}. 
		\end{proof}
		
		These two lemmas combine to prove that the integer part added to $\S^k$ at each iteration $k$ of the algorithm has a nonempty stability region. 
		
		\begin{proposition}
			\label{prop:newitnonemptystab}
			If $x_I^{k+1}$ is added to $\S^k$ in iteration $k$ of the~\ref{RVFAlgo_ch2} then its stability region $\C(x_I^{k+1})$ is nonempty.
		\end{proposition}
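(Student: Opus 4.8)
The plan is to exhibit an explicit point lying in the stability region $\C(x_I^{k+1})$, namely the criterion-space image $\zeta^* = C_I^{1:\ell} x_I^{k+1} + C_C^{1:\ell} x_C^{k+1}$ of the solution produced in iteration $k$. First I would observe that since $(x_I^{k+1}, x_C^{k+1}) \in \XMO$, this pair lies in $\S(\zeta^*)$, so $\S(\zeta^*)\neq\emptyset$ and hence $\zeta^*\in\C$. It therefore makes sense to test the defining condition of $\C(x_I^{k+1})$ at $\zeta^*$, and the remaining task is to establish $z(\zeta^*) = \bar{z}(\zeta^*;\ x_I^{k+1})$.

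The key ingredient is Lemma~\ref{lem:newitgivesvalue}, which gives $z(\zeta^*) = c^0_I x_I^{k+1} + c^0_C x_C^{k+1}$. Before applying it, I would invoke Lemma~\ref{lem:conversionstillopt} to account for the conversion step: the pair actually stored by the algorithm is the efficient solution obtained from~\eqref{eq:conversion_to_NDP}, and that lemma guarantees it still solves~\eqref{eq:defnextiterate}. Consequently Lemma~\ref{lem:newitgivesvalue} applies to the stored $(x_I^{k+1}, x_C^{k+1})$ and yields the claimed value of $z$ at $\zeta^*$.

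With the value of $z(\zeta^*)$ in hand, the argument finishes by sandwiching the bounding function. Lemma~\ref{lem:CRrearrange} gives $\bar{z}(\zeta^*;\ x_I^{k+1}) \leq c^0_I x_I^{k+1} + c^0_C x_C^{k+1} = z(\zeta^*)$, while Proposition~\ref{z_bar_bound} gives the reverse inequality $\bar{z}(\zeta^*;\ x_I^{k+1}) \geq z(\zeta^*)$, since every bounding function majorizes $z$. Combining the two, $\bar{z}(\zeta^*;\ x_I^{k+1}) = z(\zeta^*)$, so $\zeta^* \in \C(x_I^{k+1})$ and the stability region is nonempty.

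I do not anticipate a genuine obstacle here: once the right test point is chosen, the whole proof is a two-line sandwich between the upper bound of Proposition~\ref{z_bar_bound} and the lower bound of Lemma~\ref{lem:CRrearrange}. The only subtlety worth flagging is the bookkeeping about \emph{which} solution the value identity applies to, since the algorithm stores the converted efficient solution rather than the raw subproblem optimizer; this is precisely why one must route through Lemma~\ref{lem:conversionstillopt} before appealing to Lemma~\ref{lem:newitgivesvalue}. I would also note, consistent with the discussion preceding the proposition, that nonemptiness \emph{at the time of addition} does not rule out the region later becoming redundant as further bounding functions are introduced.
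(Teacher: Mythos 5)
Your proof is correct and follows essentially the same route as the paper: test the point $\zeta^* = C_I^{1:\ell} x_I^{k+1} + C_C^{1:\ell} x_C^{k+1}$, use Lemmas~\ref{lem:conversionstillopt} and~\ref{lem:newitgivesvalue} to identify $z(\zeta^*)$, and sandwich $\bar{z}(\zeta^*;\ x_I^{k+1})$ between $z(\zeta^*)$ (from the fact that bounding functions majorize $z$) and $c^0_I x_I^{k+1} + c^0_C x_C^{k+1}$ (from Lemma~\ref{lem:CRrearrange}). The only cosmetic difference is that you cite Proposition~\ref{z_bar_bound} where the paper cites the characterization in Theorem~\ref{thm:Dis_Representation} for the lower bound, which amounts to the same inequality.
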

		\begin{proof}
			By Lemmas~\ref{lem:newitgivesvalue} and~\ref{lem:conversionstillopt}, if $x_I^{k+1}$ is added to $\S^k$ in iteration $k$ of the~\ref{RVFAlgo_ch2}, then $x_C^{k+1}$ found in the process has $c^0_Ix_I^{k+1} + c^0_Cx_C^{k+1}=z(\zeta)$ where $\zeta=C_I^{1:\ell} x_I^{k+1} + C^{1:\ell}_C x_C^{k+1}$. But by the characterization of the VF as the minimum of a finite set of bounding functions (as per Theorem~\ref{thm:Dis_Representation}), we have that 
			$$
			z(\zeta) \leq \bar{z}(\zeta;\ x_I^{k+1})\leq c^0_Ix_I^{k+1} + c^0_Cx_C^{k+1},
			$$ 
			where the latter inequality follows by Lemma~\ref{lem:CRrearrange}, since $(x_I^{k+1},x_C^{k+1})\in\XMO$. Thus it must be that $z(\zeta) = \bar{z}(\zeta;\ x_I^{k+1})$, so $\zeta \in \C(x_I^{k+1})$ and the stability region $\C(x_I^{k+1})$ is nonempty, as required.
		\end{proof}

		\section{Directional Derivatives of the Minimum of Polyhedral Functions}
		\label{appendix-ddminfinitepolyhedral}

		Here, we describe the directional derivatives of a function formed by taking the minimum of a finite set of polyhedral functions.  We are particularly interested in functions defined over the extended reals. 
		
		The properties we identify here are for any such function; they are not specific to VFs. We think it is very likely that the material we provide here has already been published. However we have been unable to locate a source for it, so include it for the sake of completeness.
		
		First, we give definitions and conventions for directional derivatives of functions defined over the extended reals. Then we briefly review the properties of polyhedral functions, before giving our main result on the directional derivatives of the minimum of a finite set of polyhedral functions.
		
		Consider $f:\Re^n\rightarrow \Re\cup\{+\infty\}$, a function defined over the extended reals that is bounded below, and so cannot have the value $-\infty$. We refer to the set of points $x\in\Re^n$ for which $f(x)<+\infty$ as the {\em finite domain} of $f$. We are interested in functions with closed finite domain. The directional derivative of $f$ at the point $x\in\Re^n$ in direction $d\in \Re^n$ is denoted by $\nabla_df(x)$ and is defined by
		$$
		\nabla_d f(x) =  \lim_{t\rightarrow 0^+} \frac{f(x+td)-f(x)}{t}.
		$$
		
		We take $\nabla_df(x)=0$ if $f(x)=+\infty$ (meaning $x$ is not in the finite domain of $f$). If $f(x) < +\infty$ and there exists $\epsilon>0$ such that $f(x+td)=+\infty$ for all $t\in (0,\epsilon)$, then we say $x$ is on the boundary of the finite domain of $f$ and $d$ points out of it. In this case, $\nabla_df(x) = +\infty$.
		
		We now turn our attention to the case of a function defined as the minimum of a finite set of polyhedral functions.
		
		Say $z^k:\Re^n\rightarrow\Re\cup\{+\infty\}$ is a given polyhedral function for each $k=1,\dots,K$, which means that the epigraph over its finite domain is a polyhedron. A property of a polyhedral function is that for any point $x$ and direction $d$, one of the following cases must hold:
		\begin{enumerate}
			\item $x$ is not in the finite domain of $z^k$, in which case $\nabla_d z^k(x)=0$,
			\item $x$ is on the boundary of the finite domain of $z^k$ and $d$ points out of it, in which case $\nabla_d z^k(x)=+\infty$, or
			\item there exists $\epsilon>0$ such that $x+td$ is in the finite domain of $z^k$ for all $t\in[0,\epsilon]$, and, furthermore, $\epsilon$ may be taken to be sufficiently small that all points in the line segment defined as $\{x+td\;:\; 0\leq t \leq \epsilon\}$ lie on the same facet of the polyhedron created by the epigraph of $z^k$ over its finite domain. In this case, there exists $a\in\Re^n$ and $b\in\Re$ such that $z^k(x+td)=a^{\top}(x+td)+b$ for all $t\in[0,\epsilon]$. Note that $a$ and $b$ depend on $x$ and $d$, but not on $\epsilon$, and it follows that $\nabla_d z^k(x)=a^{\top}d$. 
		\end{enumerate}
		It is also helpful to observe that the finite domain of a polyhedral function must itself be a polyhedron, which is closed.

		Let  $z:\Re^n\rightarrow\Re\cup\{+\infty\}$ be defined to be
		$$
		z(x) = \min_{k=1.\dots,K} z^k(x).
		$$
		Since each function $z^k$ bounds $z$ from above, we call $z^k$ a bounding function. Define
		$$
		\kappa^*(x) = \{k\in\{1,\dots,K\}: z^k(x) = z(x)\},
		$$
		to be the index set of the bounding functions that are active at $x$. Clearly, the finite domain of $z$ is the union of the finite domains of its bounding functions, so is the union of a finite collection of closed sets, and hence is closed. 
		
		It is useful to observe that in the case that $x$ is in the finite domain of $z$ and $d$ does {\em not} point out of it, then it may be that there is a jump, or discontinuity, in $z$ when moving away from $x$ in the direction $d$. In other words, for such $x$ and $d$, it may be that 
		$$
		\lim_{t\rightarrow 0^+} z(x+td) > z(x).
		$$
		In such a case, there must exist $k\in \{1,\dots,K\}$ and $\epsilon > 0$ so that $z(x+td)=z^k(x+td)$ for all $t\in (0,\epsilon)$, where, importantly, $t=0$ is {\em not} included in this interval. So
		$$
		\lim_{t\rightarrow 0^+} z(x+td) 
		=\lim_{t\rightarrow 0^+} z^k(x+td) = z^k(x) > z(x),
		$$
		and it follows that $\nabla_d z(x) = +\infty$.
		
		Whether or not $z$ has a discontinuity when moving away from $x$ in direction $d$, the structure of $z$ as a minimum of a finite set of polyhedral functions ensures that when $x$ is in the finite domain of $z$ and $d$ does not point out of it, there must exist $k\in \{1,\dots,K\}$, $a\in\Re^n$ and $b\in\Re$ defining a facet of the epigraph of $z^k$ over its finite domain, and $\epsilon>0$ sufficiently small so that 
		$$ 
		z(x+td) = z^k(x+td) = a^{\top}(x+td) + b, 
		\qquad \forall t\in (0,\epsilon).
		$$
		We say that the facet of $z^k$ defined by $(a,b)$ {\em yields $z$ when moving away from $x$ in direction $d$}.
		
		We now give the main result.
		
		\begin{proposition} \label{prop:dirderivminpolyfunc}
			For any point $x\in\Re^n$ and any direction $d\in\Re^n$,
			$$
			\nabla_dz(x) = \min_{k\in\kappa^*(x)} \nabla_d z^k(x).
			$$
		\end{proposition}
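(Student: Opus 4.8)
The plan is to prove the two inequalities $\nabla_d z(x) \le \min_{k\in\kappa^*(x)} \nabla_d z^k(x)$ and $\nabla_d z(x) \ge \min_{k\in\kappa^*(x)} \nabla_d z^k(x)$, taking care of the extended-real (infinite) values via the conventions established above. First I would dispose of the trivial case in which $x$ is not in the finite domain of $z$: then $z(x)=+\infty$ forces $z^k(x)=+\infty$ for every $k$, so $\kappa^*(x)=\{1,\dots,K\}$ and $\nabla_d z^k(x)=0=\nabla_d z(x)$ for all $k$, whence both sides equal $0$. So assume $z(x)<+\infty$ for the rest.

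The $\le$ inequality comes essentially for free from the pointwise bound. For any active index $k\in\kappa^*(x)$ we have $z(x+td)\le z^k(x+td)$ for all $t$ and $z(x)=z^k(x)$, so the difference quotients satisfy $\frac{z(x+td)-z(x)}{t}\le\frac{z^k(x+td)-z^k(x)}{t}$ for every $t>0$. Passing to the limit $t\searrow 0$ in the extended reals gives $\nabla_d z(x)\le\nabla_d z^k(x)$, and minimizing over $k\in\kappa^*(x)$ yields $\nabla_d z(x)\le\min_{k\in\kappa^*(x)}\nabla_d z^k(x)$. A convenient consequence is that this single inequality already settles every case in which $\nabla_d z(x)=+\infty$: when the left-hand side is $+\infty$, the inequality forces $\nabla_d z^k(x)=+\infty$ for each active $k$, so the minimum is $+\infty$ as well. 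In particular this covers both the situation where $d$ points out of the finite domain of $z$ and the situation where $z$ has an upward jump at $x$ in direction $d$.

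It therefore remains only to treat $\nabla_d z(x)<+\infty$, where I would establish the reverse inequality by exhibiting one active bounding function that attains $\nabla_d z(x)$. Finiteness rules out $d$ pointing out of the finite domain, so the structural observation recorded just before the statement applies: there is an index $k^\star$, an affine form $a^{\top}(\cdot)+b$ defining a facet of the epigraph of $z^{k^\star}$, and $\epsilon>0$ with $z(x+td)=z^{k^\star}(x+td)=a^{\top}(x+td)+b$ for all $t\in(0,\epsilon)$. Since the finite domain of the polyhedral function $z^{k^\star}$ is closed and contains the points $x+td$ converging to $x$, the point $x$ lies in it, and continuity of a polyhedral function on its finite domain gives $z^{k^\star}(x)=\lim_{t\searrow 0}(a^{\top}(x+td)+b)=a^{\top}x+b$. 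Because $\nabla_d z(x)$ is finite there is no jump, i.e.\ $\lim_{t\searrow 0} z(x+td)=z(x)$, which forces $a^{\top}x+b=z(x)$; hence $z^{k^\star}(x)=z(x)$ and $k^\star\in\kappa^*(x)$. Computing directly, $\nabla_d z(x)=\lim_{t\searrow 0}\frac{a^{\top}(x+td)+b-z(x)}{t}=a^{\top}d=\nabla_d z^{k^\star}(x)$, so $\min_{k\in\kappa^*(x)}\nabla_d z^k(x)\le\nabla_d z^{k^\star}(x)=\nabla_d z(x)$. Combined with the first inequality this gives the claimed equality.

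The main obstacle is bookkeeping around discontinuities and infinite directional derivatives rather than any deep difficulty: one must ensure that the yielding function $k^\star$ is genuinely active at $x$ exactly when there is no jump, and that in the presence of a jump every active function has an infinite derivative in direction $d$. Routing all the infinite-valued cases through the single $\le$ inequality, so that only the continuous (finite-valued) case needs the explicit facet argument, keeps these subtleties manageable; the one technical fact leaned on throughout is that a polyhedral function is continuous on its closed finite domain.
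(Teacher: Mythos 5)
Your proof is correct, but it is organized quite differently from the paper's. The paper argues by an exhaustive three-case analysis and, in the crucial finite case, lets $k^*$ be the minimizer of $\nabla_d z^k(x)$ over $\kappa^*(x)$ and shows its local slope $\hat{a}^{\top}d$ must equal the slope $a^{\top}d$ of the facet yielding $z$ by contradiction, constructing an explicit threshold $\epsilon'''$ beyond which $z^{k^*}(x+td)$ would drop below $z(x+td)$ and violate the definition of $z$ as a minimum. You instead split the claim into two inequalities: the bound $\nabla_d z(x)\le\min_{k\in\kappa^*(x)}\nabla_d z^k(x)$ falls out of the pointwise ordering of difference quotients (using $z\le z^k$ and $z(x)=z^k(x)$ for active $k$), and this single inequality absorbs every infinite-valued case at once, including the jump case that the paper handles separately inside Case 3; for the reverse inequality you show directly that the bounding function whose facet yields $z$ along $d$ is itself active at $x$ whenever $\nabla_d z(x)$ is finite (closedness of its domain puts $x$ in it, continuity gives $z^{k^\star}(x)=a^{\top}x+b$, and absence of a jump forces $a^{\top}x+b=z(x)$), so the minimum over active indices is attained at slope $a^{\top}d=\nabla_d z(x)$. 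Both proofs lean on the same structural facts recorded before the statement (eventual affine agreement along a ray, closed finite domains, continuity of polyhedral functions on their domains), but your route avoids the contradiction argument and the $\epsilon'''$ bookkeeping entirely, at the modest cost of having to verify that the difference quotients genuinely converge in the extended reals so that the limit of the pointwise inequality is legitimate --- which the eventual-affine-or-infinite structure does guarantee.
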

		
		\begin{proof} Let the point $x\in\Re^n$ and direction $d\in\Re^n$ be given. There are three main cases.\\
			\underline{Case 1: $x$ is not in the finite domain of $z$.} \\
			In this case, $\nabla_d z(x)= 0$. Furthermore, since $z(x)=+\infty$, it follows from the definition of $z$ that $z^k(x)=+\infty$, and hence $\nabla_d z^k(x)= 0$, for all $k=1,\dots,K$. The result follows.\\
			\underline{Case 2: $x$ is in the boundary of the finite domain of $z$ and $d$ points out of it.} \\
			This means that there exists $\epsilon>0$ so that $z(x+td)=+\infty$ for all $t\in (0,\epsilon)$. By the definition of $z$, it must be that for all $k=1,\dots,K$, $z^k(x+td)=+\infty$ for all $t\in (0,\epsilon)$. Now for all $k\in\kappa^*(x)$, $x$ is in the finite domain of the bounding function $z^k$, since $z^k(x)=z(x) < +\infty$. Thus $\nabla_d z^k(x)=+\infty$ and the result follows.\\
			\underline{Case 3. there exists $\epsilon>0$ such that $z(x+td)<+\infty$ for all $t\in [0,\epsilon)$.}\\
			Note that $t=0$ is included in the definition of this case, since $x$ not in the finite domain of $z$ was covered in Case 1. Assume, without loss of generality, that $k=1$ is the index of the bounding function having a facet that yields $z$ when moving away from $x$ in direction $d$. Thus there must exist $\epsilon'>0$, $a\in\Re^n$ and $b\in\Re$ such that $$
			z(x+td) = z^1(x+td)=a^{\top}(x+td)+b, \qquad \forall t\in (0,\epsilon').
			$$
			Let $k^*$ denote the minimizer of $\nabla_d z^k(x)$ over $k\in \kappa^*(x)$. Note that $\nabla_d z^1(x) = a^{\top}d$, which is finite. So $1\in\kappa^*(x)$ implies $\nabla_d z^{k^*}(x)$ is finite. Thus if $\nabla_d z^{k^*}(x)=+\infty$ it must be that $1\not\in\kappa^*(x)$. Hence 
			$$
			\lim_{t\rightarrow 0^+} z(x+td) = \lim_{t\rightarrow 0^+} z^1(x+td) = z^1(x) > z(x),
			$$
			and $\nabla_d z(x)=+\infty=\nabla_d z^{k^*}(x)$, as required. Now consider the case that $\nabla_d z^{k^*}(x)$ is finite. By the properties of polyhedral functions discussed above, there must exist $\epsilon''>0$, $\hat{a}\in\Re^n$ and $\hat{b}\in\Re$ such that $z^{k^*}(x+td)=\hat{a}^{\top}(x+td)+\hat{b}$ for all $t\in [0,\epsilon'')$, so $\nabla_d z^{k^*}(x)=\hat{a}^{\top}d$. To summarize, we have
			$$
			\min_{k\in\kappa^*(x)} \nabla_d z^k(x)=\hat{a}^{\top}d \qquad\text{and}\qquad \nabla_d z(x)=a^{\top}d.
			$$
			Now suppose, for the sake of contradiction, that $\hat{a}^{\top}d \neq a^{\top}d$. Define $\epsilon'''$ by
			$$
			\epsilon''' = \left\{\begin{array}{ll}
				+\infty, & \text{if } \hat{a}^{\top}d < a^{\top}d \\
				\frac{z^1(x)-z(x)}{\hat{a}^{\top}d - a^{\top}d}, & \text{if } \hat{a}^{\top}d > a^{\top}d.
			\end{array}\right.
			$$
			Observe that $\epsilon'''>0$ since $1\in\kappa^*(x)$ implies $\hat{a}^{\top}d =\nabla_d z^{k^*}(x)\leq \nabla_d z^1(x) = a^{\top}d$, so the case $\hat{a}^{\top}d > a^{\top}d$ implies that $1\not\in\kappa^*(x)$, and so $z^1(x) > z(x)$.
			Then for $0<t<\epsilon'''$ we have that
			$$
			t(\hat{a}^{\top}d - a^{\top}d) < z^1(x)-z(x).
			$$
			Now $z(x)=\hat{a}^{\top} x + \hat{b}$ since $k^*\in\kappa^*(x)$, and by continuity of $z^1$, we also have $z^1(x)=a^{\top}x+b$. Substituting these in, we obtain
			$$
			t(\hat{a}^{\top}d - a^{\top}d) < a^{\top}x+b-(\hat{a}^{\top} x + \hat{b}),
			$$
			which is equivalently written as
			$$
			\hat{a}^{\top} (x+td) + \hat{b} < a^{\top}(x+td)+b.
			$$
			Since $t>0$, $t <\epsilon'$ and $t<\epsilon''$, it must be that
			$$
			z^{k^*}(x+td) = \hat{a}^{\top} (x+td) + \hat{b} < a^{\top}(x+td)+b = z(x+td),
			$$
			which contradicts the definition of $z$. Thus it must be that $\hat{a}^{\top}d = a^{\top}d$, as required.
		\end{proof}

	\end{appendices}
	
\end{document}